\newtheorem{thm}{Theorem}[section]
\newtheorem{cor}[thm]{Corollary}
\newtheorem{prop}[thm]{Proposition}
\newtheorem{lem}[thm]{Lemma}
\theoremstyle{definition}
\newtheorem{defn}[thm]{Definition}
\theoremstyle{remark}
\newtheorem{rem}[thm]{Remark}
\newcommand{\Q}{\Bbb{Q}}
\newcommand{\OO}{\mathcal{O}}
\newcommand{\HH}{\mathcal{H}}
\newcommand{\F}{\Bbb{F}}
\newcommand{\Z}{\Bbb{Z}}
\newcommand{\N}{\Bbb{N}}
\newcommand{\m}{\frak{m}}
\newcommand{\Mod}{\text{Mod}}
\newcommand{\ind}{\text{ind}}
\newcommand{\gr}{\text{gr}}
\newcommand{\Hom}{\text{Hom}}
\newcommand{\Ob}{\text{Ob}}
\newcommand{\rk}{\text{rk}}
\newcommand{\Ext}{\text{Ext}}
\newcommand{\End}{\text{End}}
\newcommand{\Rep}{\text{Rep}}
\newcommand{\Spec}{\text{Spec}}
\newcommand{\SL}{\text{SL}}
\newcommand{\op}{\text{op}}
\newcommand{\y}{\hspace{6pt}}
\let\c@equation\c@thm
\numberwithin{equation}{section}
\title{Koszul duality for Iwasawa algebras modulo $p$}
\author{Claus Sorensen}
\date{}
\begin{document}

\begin{abstract}
In this article we establish a version of Koszul duality for filtered rings arising from $p$-adic Lie groups. Our precise setup is the following. We let $G$ be a uniform pro-$p$ group
and consider its completed group algebra $\Omega=k[\![G]\!]$ with coefficients in a finite field $k$ of characteristic $p$. It is known that $\Omega$ carries a natural filtration and
$\gr \Omega=S(\frak{g})$ where $\frak{g}$ is the (abelian) Lie algebra of $G$ over $k$. One of our main results in this paper is that the Koszul dual $\gr \Omega^!=\bigwedge \frak{g}^{\vee}$ can be promoted to an $A_{\infty}$-algebra in such a way that the derived category of pseudocompact $\Omega$-modules $D(\Omega)$ becomes equivalent to the 
derived category of strictly unital $A_{\infty}$-modules $D_{\infty}(\bigwedge \frak{g}^{\vee})$. In the case where $G$ is an abelian group we prove that the $A_{\infty}$-structure is trivial and deduce an equivalence between  $D(\Omega)$ and the derived category of differential graded modules over $\bigwedge \frak{g}^{\vee}$ which generalizes a result of Schneider for $\Z_p$.
\end{abstract}

\maketitle

\tableofcontents


\onehalfspacing

\section{Introduction}

Classical Koszul duality is a derived equivalence between modules over the symmetric algebra $S(V)$ and modules over the exterior algebra $\bigwedge V^{\vee}$.  
Here $V$ is a finite-dimensional vector space over a field $k$, and $V^{\vee}=\Hom_k(V,k)$ is its dual space. More precisely one considers the category of finitely generated graded left modules over $S(V)$ and its bounded derived category $D^b(S(V))$. Similarly for the exterior algebra. Then Koszul duality gives an equivalence of triangulated categories
$$
D^b(S(V)) \overset{\sim}{\longrightarrow}  D^b(\bigwedge V^{\vee}).
$$
This is \cite[Thm.~3]{BGG78}. The equivalence of course derives its name from the Koszul resolution
$$
\cdots \longrightarrow S(V) \otimes \bigwedge^2 V \longrightarrow S(V) \otimes \bigwedge^1 V \longrightarrow S(V)\longrightarrow k \longrightarrow 0.
$$
Building on work of Priddy, L\"{o}fwall, Backelin, and others, this was generalized to more general rings in \cite{BGS96}. They define a Koszul algebra to be a graded $k$-algebra 
$A=k \oplus A_1\oplus A_2 \oplus \cdots$ for which $k$ admits a (graded) projective resolution $P_{\bullet}\rightarrow k$ such that each $P_i$ is generated by its degree $i$ component ($P_i=AP_{i,i}$). The Koszul dual algebra $A^!$ is defined by viewing $A$ as a quadratic algebra and then dualizing the relations, cf. \cite[Def.~2.8.1]{BGS96}. Alternatively one can identify it with the (opposite) Yoneda algebra $A^!=\Ext_A^*(k,k)^{\op}$ of self-extensions of $k$, cf. \cite[Thm.~2.10.1]{BGS96}.
Assuming $A$ is locally finite-dimensional, that is $\dim_k(A_i)<\infty$ for all $i$, the Koszul dual $A^!$ is again Koszul and canonically $A^{!!} \overset{\sim}{\longrightarrow}A$.
In the classical case $A=S(V)$ has Koszul dual $A^!=\bigwedge V^{\vee}\simeq \Ext_{S(V)}^*(k,k)^{\op}$. Still under the assumption that $A$ is {\it{locally}} finite-dimensional,
\cite[Thm.~2.12.1]{BGS96} establishes an equivalence
$$
D^{\downarrow}(A) \overset{\sim}{\longrightarrow} D^{\uparrow}(A^!)
$$
where $D^{\downarrow}$ and $D^{\uparrow}$ are variants of the derived categories. For instance $D^{\downarrow}(A)$ is obtained by inverting quasi-isomorphisms in the category 
$C^{\downarrow}(A)$ of complexes of graded $A$-modules $\cdots \rightarrow M^i \rightarrow M^{i+1}\rightarrow \cdots$ satisfying $M_j^i=0$ if $i<\!\!<0$ or $i+j>\!\!>0$
(no other finiteness assumptions on the $M^i$). 
Assuming $A$ itself is finite-dimensional (hence $A_i=0$ for $i>\!\!>0$), and that $A^!$ is Noetherian, \cite[Thm.~2.12.6]{BGS96} gives an equivalence between bounded derived categories
$$
D^b(A) \overset{\sim}{\longrightarrow} D^b(A^!).
$$
Here, as in the classical case, we are deriving the categories of finitely generated graded modules.

There have been several attempts to extend Koszul duality to filtered algebras, notably the works \cite{Pos93, Flo06} of Fl\o{}ystad and Positselski which share some important features with the main results of this paper. Given a Koszul algebra $A$ as above, Positselski considers filtered rings $U$ with $\gr(U)\simeq A$ and shows that this amounts to 
promoting $A^!$ to a "curved" differential graded algebra (DGA). When $U$ is furthermore augmented $A^!$ is an actual DGA. (This is our main case of interest so we will not recall here what it means to be curved.) Fl\o{}ystad took this further and constructed a pair of adjoint functors between homotopy categories $K(U)\rightleftarrows K(A^!,d)$. He showed they descend to an equivalence $\tilde{D}(U)\overset{\sim}{\longrightarrow} \tilde{D}(A^!,d)$ but here $\tilde{D}$ is {\it{not}} the derived category, it is rather squeezed in between the homotopy category and the derived category. We should emphasize that the filtered rings $U$ considered in \cite{Pos93} are of a completely different nature than the Iwasawa algebras we will consider in this paper. Positselski requires $U$ to be a non-homogeneous quadratic algebra $U=T(V)/(P)$ for some vector space $V$ and ideal of relations $(P)$ in the tensor algebra generated by a subspace $P \subset k \oplus V \oplus V^{\otimes 2}$ with $P \cap (k\oplus V)=0$. He associates the quadratic algebra $A=T(V)/(Q)$ where
$Q \subset V^{\otimes 2}$ is the image of $P$ under projection and decrees that the natural map $A \rightarrow \gr U$ is an isomorphism. 

Analogously we will consider completed group rings $\Omega=\Omega(G)$ modulo $p$ of certain compact $p$-adic Lie groups $G$ for which $\gr \Omega\simeq S(\frak{g})$
where $\frak{g}=\text{Lie}(G)$ is the Lie algebra, and promote the Koszul dual algebra $\gr\Omega^!\simeq \bigwedge \frak{g}^{\vee}$ to a so-called $A_{\infty}$-algebra in such a way that $D(\Omega)\overset{\sim}{\longrightarrow} D_{\infty}(\bigwedge\frak{g}^{\vee})$. The notation and terminology will be explained in more detail below.

Let us state our main results precisely. Let $G$ be a $p$-adic Lie group which is torsionfree\footnote{Positselski has kindly informed us that Theorem \ref{one} can be extended to {\it{any}} pro-$p$ group if one replaces $D(\Omega)^{\op}$ by the so-called {\it{co-derived}} category of smooth $G$-modules, by applying the results from \cite[Sect.~6.4]{Pos11}.} and pro-$p$. Let $\Omega=k[\![G]\!]$ be its completed group ring over a finite field $k$ of characteristic $p$, and consider the derived category $D(\Omega)$ of the category of pseudocompact left $\Omega$-modules. 

\begin{thm}\label{one}
The (opposite) Yoneda algebra $\Omega^!:=\Ext_{\Omega}^*(k,k)^{\op}$ has an $A_{\infty}$-algebra structure, defined uniquely up to $A_{\infty}$-isomorphism, for which there is an equivalence of triangulated categories
$$
D(\Omega)\overset{\sim}{\longrightarrow} D_{\infty}(\Omega^!)
$$
where the target is the derived category of strictly unital left $A_{\infty}$-modules over $\Omega^!$. 
\end{thm}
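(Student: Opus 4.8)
The plan is to realize $\Omega^!$ as the $A_{\infty}$-endomorphism algebra of the trivial module $k$ inside $D(\Omega)$ and then to deduce the equivalence from a derived Morita theorem. First I would fix a minimal projective resolution $P^{\bullet}\xrightarrow{\sim}k$ of $k$ over $\Omega$. Since $G$ is torsionfree pro-$p$, the ring $\Omega$ is local, Noetherian, and of finite global dimension equal to $\dim G$, so this resolution may be taken to be a finite complex of finitely generated projectives. Forming the DG endomorphism algebra $E=\End_{\Omega}^{\bullet}(P^{\bullet})$, whose cohomology is $H^{\ast}(E)=\Ext_{\Omega}^{\ast}(k,k)^{\op}=\Omega^!$, I would invoke Kadeishvili's homotopy transfer theorem to equip $\Omega^!$ with an $A_{\infty}$-structure $\{m_n\}_{n\ge 2}$ (with $m_1=0$ and $m_2$ the Yoneda product) together with an $A_{\infty}$-quasi-isomorphism $\Omega^!\to E$. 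The uniqueness of minimal models up to $A_{\infty}$-isomorphism then yields the first assertion of the theorem, and identifies $\Omega^!$ with $\mathbf{R}\End_{\Omega}(k)$ as $A_{\infty}$-algebras.

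For the equivalence I would study the functor
\[
\Phi=\mathbf{R}\Hom_{\Omega}(k,-)\colon D(\Omega)\longrightarrow D_{\infty}(\Omega^!),
\]
which sends a pseudocompact complex $M$ to $\mathbf{R}\Hom_{\Omega}(k,M)$, naturally a strictly unital left $A_{\infty}$-module over $\mathbf{R}\End_{\Omega}(k)=\Omega^!$. Because restriction of scalars along an $A_{\infty}$-quasi-isomorphism induces an equivalence of $A_{\infty}$-derived categories, the quasi-isomorphism $\Omega^!\to E$ lets me work with $E$ in place of $\Omega^!$. By construction $\Phi(k)$ is the free rank-one module $E\simeq\Omega^!$, and $\Phi$ induces the identity on $\mathbf{R}\End_{\Omega}(k)$; since $k$ is compact (it is perfect, by finite global dimension) the functor $\Phi$ preserves coproducts and is therefore fully faithful on the localizing subcategory $\langle k\rangle$ generated by $k$, with image the localizing subcategory generated by the free module, namely all of $D_{\infty}(\Omega^!)$.

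The crux, and the step I expect to be the main obstacle, is to show that $k$ is in fact a \emph{generator}, i.e. $\langle k\rangle=D(\Omega)$, equivalently that $\mathbf{R}\Hom_{\Omega}(k,M)=0$ forces $M=0$. This is precisely where the pseudocompact framework and torsionfreeness are indispensable. Every finite-length module is an iterated extension of $k$ and hence lies in $\langle k\rangle$; each $\Omega/\frak{m}^{n}$ is finite length, and $\Omega=\varprojlim\Omega/\frak{m}^{n}$ should be recovered from these inside $D(\Omega)$ using the exact inverse limits available in the pseudocompact derived category, together with a Nakayama/completeness argument showing $k$ cogenerates. The finite global dimension guaranteed by torsionfreeness keeps all resolutions bounded, which is exactly what allows $\Phi$ to land in, and surject onto, the honest derived category $D_{\infty}(\Omega^!)$ rather than the ``squeezed'' category $\tilde{D}$ of Fl\o{}ystad; for general pro-$p$ $G$ this generation fails and one is forced to the co-derived category, as the footnote records. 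Granting generation, the appropriate (dualized) form of the derived Morita theorem for compactly generated triangulated categories upgrades the fully faithful $\Phi$ to an equivalence $D(\Omega)\overset{\sim}{\to}D_{\infty}(\Omega^!)$, as claimed. An alternative route to the same generation statement is to dualize via Pontryagin duality to the category of smooth $G$-representations, where $k$ is visibly a compact generator; this is the viewpoint underlying the co-derived extension mentioned above.
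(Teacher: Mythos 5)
Your overall architecture (projective resolution of $k$, endomorphism DGA, Kadeishvili's minimal model plus Lef\`evre-Hasegawa restriction, and a derived Morita theorem hinging on $k$ being a compact generator) matches the paper's, and you correctly identify generation as the crux. But there is a genuine gap in the variance of your main functor. You work covariantly with $\Phi=\mathbf{R}\Hom_{\Omega}(k,-)$ on $D(\Omega)$ and assert that $k$ is compact there ``because it is perfect.'' In the derived category of \emph{pseudocompact} modules this implication fails: $\Mod(\Omega)$ is the \emph{opposite} of a Grothendieck category, so it has exact products and inverse limits, while coproducts are completed direct sums (the Pontryagin dual of a product of duals). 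Already $\Omega$ itself is not compact in $D(\Omega)$, since $\Hom_{\Omega}(\Omega,\coprod_{j}M_j)$ is the completed sum rather than $\bigoplus_j M_j$; the same problem defeats the claim for the perfect complex $k$, so the compactly-generated Morita machinery cannot be applied to $(D(\Omega),k)$ with your covariant $\Phi$. The paper circumvents this by running the entire argument on the Pontryagin-dual side: $k$ \emph{is} a compact generator of $D(G)=D(\Omega)^{\op}$ (compactness because group cohomology commutes with direct sums of smooth representations), Keller's theorem is applied to $D(G)$, and the resulting functor, read back on the pseudocompact side, is the \emph{contravariant} $X^{\bullet}\mapsto\Hom^{\bullet}_{\Omega}(X^{\bullet},P^{\bullet})=\mathbf{R}\Hom_{\Omega}(X^{\bullet},k)$, not $\mathbf{R}\Hom_{\Omega}(k,X^{\bullet})$; one then takes opposite categories and the opposite $A_{\infty}$-algebra at the very end to land on $D(\Omega)\overset{\sim}{\longrightarrow}D_{\infty}(\Omega^!)$. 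Your closing sentence offering Pontryagin duality as an ``alternative route to the generation statement'' is in fact the necessary route for the compactness statement as well, and it has to carry the whole proof, not just one step.

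Two smaller points. First, within the paper's conventions one has $h^*\bigl(\End^{\bullet}_{\Omega}(P^{\bullet})\bigr)=\Ext^*_{\Omega}(k,k)$ (no opposite); the $\op$ appears on the $G$-side, $\End^{\bullet}_G(I^{\bullet})^{\op}\simeq\End^{\bullet}_{\Omega}(P^{\bullet})$, and the final passage to $\Omega^!=E(\Omega)^{\op}$ is done by taking opposite $A_{\infty}$-algebras after the equivalence $D(\Omega)^{\op}\simeq D_{\infty}(E(\Omega))$ is established. Keeping track of this is not cosmetic, because it is exactly what converts the contravariant equivalence into the covariant statement of the theorem. Second, your sketch of generation on the pseudocompact side (finite-length modules as iterated extensions of $k$, then $\Omega=\varprojlim\Omega/\m^{n}$) is the correct dual of the paper's argument with the functors $\Gamma_j(V)=\Hom_G(\Omega/\m^j,V)$ and the exhaustion $V=\bigcup_j\Gamma_j(V)$ coming from smoothness; but the passage from ``each $R\Gamma_j$ kills $V^{\bullet}$'' to ``$V^{\bullet}$ is exact'' requires replacing $V^{\bullet}$ by a complex of acyclic objects before taking the (co)limit, and on the pseudocompact side the corresponding inverse-limit step would need a $\varprojlim^1$-type justification that you have not supplied.
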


When $G$ is a uniform pro-$p$ group (see the discussion after corollary \ref{cohom}) one can compute $\Omega^!$ using Lazard's calculation of the mod $p$ cohomology ring of an equi-$p$-valued group and deduce the next result.

\begin{thm}\label{two}
Let $G$ be a uniform pro-$p$ group with Lie algebra $\frak{g}$ over $k$ (which is necesarily abelian). Then 
$\bigwedge \frak{g}^{\vee}$ has an $A_{\infty}$-algebra structure, defined uniquely up to $A_{\infty}$-isomorphism, for which there is an equivalence of triangulated categories
$$
D(\Omega)\overset{\sim}{\longrightarrow} D_{\infty}(\bigwedge \frak{g}^{\vee}).
$$
Furthermore, when $G$ is abelian the $A_{\infty}$-algebra structure on $\bigwedge \frak{g}^{\vee}$ is trivial (meaning all higher multiplication maps $\mu_n$ vanish for $n>2$) and the target is the derived category of differential graded modules.
\end{thm}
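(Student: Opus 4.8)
The plan is to deduce Theorem \ref{two} from Theorem \ref{one} by computing the Yoneda algebra $\Omega^!=\Ext_{\Omega}^*(k,k)^{\op}$ explicitly. First I would recall the standard identification $\Ext_{\Omega}^*(k,k)\cong H^*_{\cont}(G,k)$ of graded $k$-algebras (continuous cohomology with the cup product). For $G$ uniform, hence equi-$p$-valued, Lazard's computation of the mod $p$ cohomology ring gives $H^*_{\cont}(G,k)\cong\bigwedge\frak{g}^{\vee}$, an exterior algebra on $\frak{g}^{\vee}=H^1_{\cont}(G,k)$ concentrated in degree $1$. Since the exterior algebra is graded-commutative, passing to the opposite algebra changes nothing beyond the standard sign isomorphism, so $\Omega^!\cong\bigwedge\frak{g}^{\vee}$ as graded algebras. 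Transporting the $A_{\infty}$-structure furnished by Theorem \ref{one} along this isomorphism equips $\bigwedge\frak{g}^{\vee}$ with an $A_{\infty}$-structure, unique up to $A_{\infty}$-isomorphism, realizing $D(\Omega)\overset{\sim}{\to}D_{\infty}(\bigwedge\frak{g}^{\vee})$. This settles the first assertion.

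For the abelian case I would prove triviality of the higher products by a weight argument. When $G$ is abelian it is isomorphic to $\Z_p^d$, so $\Omega\cong k[\![t_1,\ldots,t_d]\!]$ is a regular complete local ring and $\gr\Omega=S(\frak{g})$ is a genuine homogeneous polynomial algebra. The filtration on $\Omega$ endows $\Omega^!$ with a second, internal (``weight'') grading refining the cohomological grading, and the operations transferred from a filtration-compatible resolution preserve this weight. Because $S(\frak{g})$ is a Koszul algebra, classical Koszul duality forces $\Ext_{\Omega}^*(k,k)$ to be diagonally pure: the cohomological-degree-$i$ part $\Ext^i$ sits entirely in internal weight $i$.

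The degree bookkeeping then closes the argument. Each $\mu_n$ has cohomological degree $2-n$, so it sends $\Ext^{i_1}\otimes\cdots\otimes\Ext^{i_n}$ into $\Ext^{i_1+\cdots+i_n+2-n}$ while preserving the total internal weight $i_1+\cdots+i_n$. Diagonal purity would place the target in weight $i_1+\cdots+i_n+2-n$, and comparing weights forces $2-n=0$. Hence $\mu_n=0$ for every $n>2$. In the language of intrinsic formality this is the vanishing of the Hochschild obstruction groups $HH^{n}(\bigwedge\frak{g}^{\vee})$ in internal degree $2-n$ for $n\geq 3$, which is exactly what diagonal purity provides. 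Once the higher products vanish, $\bigwedge\frak{g}^{\vee}$ is an honest differential graded algebra with zero differential, and a strictly unital $A_{\infty}$-module over it with vanishing higher actions is the same datum as a differential graded module; the two derived categories coincide, so $D_{\infty}(\bigwedge\frak{g}^{\vee})$ is the derived category of differential graded modules. This recovers Schneider's result for $\Z_p$ (the case $d=1$) and generalizes it.

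The main obstacle I expect is justifying that the transferred $A_{\infty}$-structure genuinely respects the internal weight grading in the filtered, completed setting rather than the purely graded one where diagonal purity is routine. For a complete local ring the grading on $\Ext$ is only defined through the associated graded, so one must verify that the minimal-model construction can be run filtration-compatibly and that no weight-lowering correction terms intervene. This is precisely where commutativity of $G$ is essential: for non-abelian uniform $G$ the relations in $\gr\Omega=S(\frak{g})$ involve the Lie bracket, whose weight-shifting contributions obstruct diagonal purity and allow genuine higher products to survive, so the argument neither does nor should extend to that case.
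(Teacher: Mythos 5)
The first half of your argument coincides with the paper's: Theorem \ref{one} plus Lazard's computation $H^*(G,k)\simeq\bigwedge\frak{g}^{\vee}$ (via $\Ext_{\Omega}^*(k,k)\simeq\Ext_G^*(k,k)$ and graded-commutativity of the exterior algebra) gives the equivalence $D(\Omega)\overset{\sim}{\longrightarrow}D_{\infty}(\bigwedge\frak{g}^{\vee})$. For the abelian case, however, your proof has a genuine gap, and it is exactly the one you flag at the end: you assert that the transferred $A_{\infty}$-structure on $\Ext_{\Omega}^*(k,k)$ preserves an internal weight grading and is diagonally pure, but $\Omega\simeq k[\![t_1,\ldots,t_d]\!]$ is only filtered, not graded, so this grading is not visible on the DGA $\End_{\Omega}^{\bullet}(P^{\bullet})$ from which the minimal model is transferred; the homotopy retract data $(i,p,h)$ in Kadeishvili's construction have no reason to be weight-homogeneous, and without that the correction terms $\mu_n$ for $n>2$ could a priori be nonzero. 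Identifying the obstacle is not the same as overcoming it, so as written the degree bookkeeping does not close.

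The paper circumvents this entirely rather than running the minimal-model construction filtration-compatibly. One writes $\Omega\simeq\widehat{S(V)}$, takes $P^{\bullet}$ to be the $I$-adic completion of the genuine Koszul resolution $K^{\bullet}$ of $k$ over $S(V)$ (flatness of completion makes $P^{\bullet}$ exact), and checks that the natural completion map $\End_{S(V)}^{\bullet}(K^{\bullet})\longrightarrow\End_{\widehat{S(V)}}^{\bullet}(P^{\bullet})$ is a quasi-isomorphism of DGAs, since both sides compute $\bigwedge^{\bullet}V^{\vee}$ via $\Hom(S(V)\otimes\bigwedge^{\bullet}V,k)$ with vanishing differentials. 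Quasi-isomorphic DGAs have $A_{\infty}$-isomorphic minimal models, and the minimal model of $\End_{S(V)}^{\bullet}(K^{\bullet})$ is formal precisely because $S(V)$ is a (graded!) Koszul algebra --- which is where your diagonal-purity argument is legitimately available. If you want to rescue your approach you should insert this comparison step. One further inaccuracy: for any uniform $G$ (abelian or not) the Lie algebra $\frak{g}$ is abelian and $\gr\Omega\simeq S(\frak{g})$ is a polynomial ring, so the bracket does \emph{not} appear in the relations of $\gr\Omega$; the abelian hypothesis is used to identify $\Omega$ itself with $\widehat{S(V)}$ so that the Koszul resolution completes to a resolution over $\Omega$, and in the non-abelian case the obstruction lives in the non-homogeneous (filtered) relations of $\Omega$ as a deformation of $S(\frak{g})$, in the spirit of Positselski, not in the associated graded.
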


For example this theorem applies to the congruence subgroups $G=\ker\big(\mathcal{G}(\Z_p)\rightarrow \mathcal{G}(\Z/p^m\Z) \big)$ of any finite type smooth affine group scheme 
$\mathcal{G}_{/\Z_p}$-- at least for $m\gg 0$. Let us also point out that any $p$-adic Lie group contains a uniform open subgroup, cf. \cite[p.~167]{Laz65}.

The case $G=\Z_p$ of theorem \ref{two} is a result of Schneider. In \cite[p.~460]{Sch15} it is shown that there is an equivalence 
$D(\Z_p)\overset{\sim}{\longrightarrow} D(k[\varepsilon])$ where $k[\varepsilon]=k\oplus k\varepsilon$ is the algebra of dual numbers ($\varepsilon^2=0$) thought of as a differential graded algebra concentrated in degrees $0$ and $1$ with zero differential. This is a special case of \ref{two} by observing that $k[\varepsilon]\simeq \bigwedge k$, and a computation in Hochschild cohomology shows $\bigwedge k$ is intrinsically formal -- it has no non-trivial minimal $A_{\infty}$-structures whatsoever, cf. remark \ref{d=1} for more details.

Schneider's paper \cite{Sch15} in fact plays a pivotal role in the proof of our two theorems, which we will now outline. The equivalence in theorem \ref{one} factors as a composition
(after taking opposites)
$$
D(\Omega)^{\op} \overset{(1)}{=\joinrel=\joinrel=}
D(G) \overset{(2)}{=\joinrel=\joinrel=}
D(\HH^{\bullet}) \overset{(3)}{=\joinrel=\joinrel=}
D_{\infty}(h^*(\HH^{\bullet})) \overset{(4)}{=\joinrel=\joinrel=}
D_{\infty}(\Ext_{\Omega}^*(k,k)).
$$ 
Here we use $=\joinrel=\joinrel=$ instead of $\overset{\sim}{\longrightarrow}$ for clarity. We explain each step (1)--(4) separately.

\medskip

\begin{itemize}
\item[(1)] (Section \ref{dual}) The first step is just Pontryagin duality between the category $\Mod(\Omega)$ of pseudocompact $\Omega$-modules and the 
smooth $k[G]$-modules $\Rep_k^{\infty}(G)$. Taking continuous $k$-linear duals defines mutually quasi-inverse contravariant functors which are exact, and therefore induce an anti-equivalence on the level of derived categories.

\medskip

\item[(2)] (Section \ref{lie}) This is the main result of \cite{Sch15} in the special case $G=I$. We discuss this case in some detail in the main body of the text, but claim no novelty here (except for the exposition). The result is based on a theorem of Keller which says that any algebraic triangulated category $\mathcal{D}$ with (coproducts and) a compact generator $C$ is equivalent to $D(\HH_C^{\bullet})$ for some naturally defined differential graded algebra $\HH_C^{\bullet}$, see section \ref{morit}. Schneider shows that Keller's theorem applies to $\mathcal{D}=D(G)$ for a torsionfree pro-$p$ group $G$, and $C=k$ the trivial representation. The crucial step is to verify that $k$ is a generator of $D(G)$, cf. proposition \ref{gen}
below. For that we need $G$ to be torsionfree to guarantee it has finite cohomological $p$-dimension. The DGA is here simply $\HH^{\bullet}=\End_G^{\bullet}(I^{\bullet})^{\op}$ 
for a choice of injective resolution $k \rightarrow I^{\bullet}$. Alternatively $\HH^{\bullet}=\End_{\Omega}^{\bullet}(P^{\bullet})$ for a choice of projective resolution 
$P^{\bullet}\rightarrow k$, by Pontryagin duality.

\medskip

\item[(3)] (Section \ref{model}+\ref{dercat}) By a general result of Kadeishvili's from the 80s the cohomology algebra $h^*(A^{\bullet})$ of any DGA $A^{\bullet}$ (even of any $A_{\infty}$-algebra) has an $A_{\infty}$-structure $(\mu_n)_{n \geq 1}$ with $\mu_1=0$ ("minimality") and $\mu_2$ induced by multiplication on $A^{\bullet}$, for which there is a quasi-isomorphism
$f: h^*(A^{\bullet})\rightarrow A^{\bullet}$. We will discuss this in detail in section \ref{model}; here we will just note that each $\mu_n$ is a higher multiplication map
$A^{\bullet \otimes n}\rightarrow A^{\bullet}$ of degree $2-n$, and the sequence $(\mu_n)_{n \geq 1}$ satisfies generalized associativity laws known as the Stasheff identities. One can restrict $A_{\infty}$-modules along $f$ and this gives rise to an equivalence $D(A^{\bullet})\overset{\sim}{\longrightarrow} D_{\infty}(h^*(A^{\bullet}))$ as discussed in section \ref{dercat}. The latter is due to Lef\`{e}vre-Hasegawa. Applying this to $\HH^{\bullet}$ yields (3). 

\medskip

\item[(4)] (Section \ref{yon}) This is merely the computation $h^*(\HH^{\bullet})=h^*(\End_{\Omega}^{\bullet}(P^{\bullet}))=\Ext_{\Omega}^*(k,k)$.

\end{itemize}

\medskip

\noindent The first half of theorem \ref{two} is a corollary of theorem \ref{one} by calculating $\Ext_{\Omega}^*(k,k)$ for uniform pro-$p$ groups (equivalently $G$ which admit a $p$-valuation $\omega$ such that $(G,\omega)$ is equi-$p$-valued, see section \ref{valu}). Namely
$$
\Omega^!=\Ext_{\Omega}^*(k,k)^{\op}=\Ext_G^*(k,k)=H^*(G,k)=\joinrel=\joinrel=\bigwedge \frak{g}^{\vee}
$$ 
where the last step is due to Lazard, cf. corollary \ref{cohom} below. The second half (where $G$ is abelian) is proved in section \ref{prooftwo}. The basic idea is to think of $\Omega$ as a completed symmetric algebra $\Omega=\widehat{S(V)}$ and take $P^{\bullet}\rightarrow k$ to be the completion of the Koszul resolution $K^{\bullet}$. Since $S(V)$ is Koszul $\Ext_{S(V)}^*(k,k)$ is formal, from which the result follows after observing that there is a quasi-isomorphism $\End_{S(V)}^{\bullet}(K^{\bullet})\rightarrow \HH^{\bullet}$. 

In fact theorem \ref{one} has a natural extension to arbitrary $p$-adic Lie groups $G$ (not necessarily compact). As in \cite{Sch15} fix a torsionfree pro-$p$ subgroup
$I \subset G$. Schneider shows that $\ind_I^G(1)$ is a compact generator of $D(G)$ and deduces an equivalence with $D(\HH^{\bullet})$ from Keller's theorem. Here 
the Hecke DGA is defined as $\HH^{\bullet}=\End_G^{\bullet}(I^{\bullet})^{\op}$ for a choice of injective resolution $\ind_I^G(1)\longrightarrow I^{\bullet}$. The Iwasawa algebra is defined to be
$\Omega=k[G] \otimes_{k[I]} k[\![I]\!]$, and Pontryagin duality extends to an equivalence $D(\Omega)^{\op}=D(G)$ as explained in \cite[Thm.~1.5]{Koh17}. Altogether
the mod $p$ "derived Hecke algebra" (cf. \cite{Ven17})
$$
\Omega^!:=\Ext_G^*(\ind_I^G(1),\ind_I^G(1))\simeq H^*(I,\ind_I^G(1))
$$
admits an $A_{\infty}$-structure such that $D(\Omega) \overset{\sim}{\longrightarrow} D_{\infty}(\Omega^!)$. We have chosen not to emphasize this stronger result since the computation of $H^*(I,\ind_I^G(1))$ appears to be quite delicate in general. The recent paper \cite{OS18} deals with the case $\SL_2(F)$ for arbitrary $F/\Q_p$, which is already quite involved. See \cite[Thm.~3.23]{OS18} for sample calculations.

We should point out that Theorems \ref{one} and \ref{two} were known to some experts in the field, although they never appeared explicitly in print. Also, the $A_{\infty}$-structures we work with are only uniquely defined up a {\it{non}}-canonical $A_{\infty}$-isomorphism -- they depend on several choices; see the end of Section \ref{model} for instance. This severely limits the usage of our results beyond the case of abelian groups $G$. However, we feel that Theorems \ref{one} and \ref{two} are of some interest in their own right -- at least from a philosophical standpoint.

In order to make this article more accessible to mathematicians from different disciplines, we have included lots of details and background on the Keller-Schneider equivalence, 
Lazard's theory of $p$-valued groups and their cohomology, $A_{\infty}$-algebras and modules, and other material relevant for the paper.  

\section{Notation}\label{not}

Throughout $p$ denotes a fixed odd prime. Similar arguments most likely go through for $p=2$ with minor modifications, but we have not checked this in detail. One issue is the notion of a uniform pro-$p$ group $G$ in the case $p=2$ where one should require that $G/\overline{G^4}$ is abelian.

If $\mathcal{A}$ is an abelian category its opposite category $\mathcal{A}^{\op}$ is again an abelian category; kernels in $\mathcal{A}$ correspond to cokernels in $\mathcal{A}^{\op}$ and vice versa (similarly for images and coimages), cf. \cite[Lem.~12.5.2]{Sta18}. If $\mathcal{C}$ is any category, when we view $X \in \Ob(\mathcal{C})$ as an object of $\mathcal{C}^{\op}$ we will write $X^{\op}$ to avoid confusion; similarly for morphisms. 

A triangulated category is an additive category $\mathcal{D}$ with a translation functor $T: \mathcal{D} \rightarrow \mathcal{D}$ (an auto-equivalence) and a class of distinguished triangles $X\rightarrow Y \rightarrow Z \rightarrow TX$ satisfying four axioms TR1--4, cf. \cite[Def.~13.3.2]{Sta18}.
The opposite category $\mathcal{D}^{\op}$ is again a triangulated category. Its translation functor is the inverse $T^{-1}$ and the triangle $X^{\op}\rightarrow Y^{\op}\rightarrow Z^{\op}\rightarrow T^{-1}X^{\op}$ is declared to be distinguished precisely when
$T^{-1}X \rightarrow Z \rightarrow Y \rightarrow X$ is distinguished in $\mathcal{D}$, cf. \cite[Rem.~1.1.5]{Nee01}.

If $\mathcal{A}$ is an abelian category $C(\mathcal{A})$ denotes the category of complexes $\cdots\rightarrow X^i \rightarrow X^{i+1}\rightarrow \cdots$. We let $K(\mathcal{A})$ be the homotopy category and $D(\mathcal{A})$ is the (unbounded) derived category; both are triangulated categories in the usual fashion, cf. \cite[Ch.~I]{Har66}. Taking opposites is compatible with derived categories, to the effect that there is a natural equivalence of triangulated categories $D(\mathcal{A}^{\op})\overset{\sim}{\longrightarrow} D(\mathcal{A})^{\op}$.

\section{Pontryagin duality for profinite groups}\label{dual}

In this section $G$ denotes an arbitrary profinite group and $k$ is any field. We let $\Omega=\Omega(G)=k[\![G[\!]$ be the Iwasawa $k$-algebra of $G$. We will usually suppress $G$ from the notation and just write $\Omega$. It has a natural augmentation map $\Omega \overset{a}{\longrightarrow} k$ and carries the inverse limit topology. We always endow $k$ with the discrete topology. Thus $\Omega=\varprojlim k[G/N]$ (where $N \subset G$ runs over the open normal subgroups) is a {\it{pseudocompact}} ring; meaning it is a complete Hausdorff topological ring which has a fundamental system of open neighborhoods of zero consisting of two-sided ideals $I_N$ such that $\Omega/I_N$ is Artinian. 

In this paper we will mostly be interested in the category $\Mod(\Omega)$ of pseudocompact left $\Omega$-modules (and continuous homomorphisms). Its objects can be defined 
as pseudocompact $k$-vector spaces $M$ (that is inverse limits of finite-dimensional $k$-vector spaces) with a $k$-linear jointly continuous action $G \times M \longrightarrow M$. One can immediately verify that the $k[G]$-module structure of $M$ then extends uniquely to a continuous map $\Omega\times M \longrightarrow M$, and that $M$ admits a neighborhood basis at zero consisting of $\Omega$-submodules.  

It is well-known that $\Mod(\Omega)$ is an abelian category, which admits arbitrary products, and the formation of filtered (inverse) limits is exact, cf. \cite[p.~392]{Gab62}.  
Moreover $\Mod(\Omega)$ has enough projectives, cf. \cite[Lem.~1.6]{Bru66}.

Dually we have the abelian category $\Rep_k^{\infty}(G)$ of {\it{smooth}} $G$-representations on $k$-vector spaces $V$ (and $k[G]$-linear maps). In other words the action
$G \times V \longrightarrow V$ is continuous for the discrete topology on $V$. The Pontryagin dual of $V$ is defined as $V^{\vee}=\Hom_k(V,k)$ equipped with the topology of pointwise convergence (i.e., induced from the product topology on $k^V$). With the contragredient $G$-action $V^{\vee}$ thus becomes an object of $\Mod(\Omega)$. Conversely, starting from a pseudocompact $\Omega$-module $M$ we define its dual to be $M^{\vee}=\Hom_k^{\text{cts}}(M,k)$ with the discrete topology. It is well-known that this sets up a duality between the two categories. See \cite{Sch95, ST02, Eme10} and the more recent \cite{Koh17}. We summarize this below. 

\begin{thm}\label{pont}
The Pontryagin duality functors $(\cdot)^{\vee}$ are exact and mutually quasi-inverse. Thus
$$
(\cdot)^{\vee}: \Mod(\Omega)^{\op} \longrightarrow \Rep_k^{\infty}(G)
$$
is an equivalence of categories. Furthermore $\Omega^{\vee}=\mathcal{C}^{\infty}(G,k)$.  
\end{thm}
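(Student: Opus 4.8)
The plan is to reduce the whole statement to the perfect duality of finite-dimensional $k$-vector spaces and then transport it along the defining (co)limits. First I would record a level-wise description of both functors. Every smooth representation is the filtered union $V=\varinjlim V_0$ of its finite-dimensional $G$-stable subspaces (each vector is fixed by an open normal subgroup $N$, and its finite $G/N$-orbit spans such a $V_0$), whence $V^{\vee}=\Hom_k(V,k)=\varprojlim V_0^{*}$ is an inverse limit of finite-dimensional duals, and the pointwise-convergence topology is precisely the inverse-limit topology, so $V^{\vee}$ is pseudocompact. Dually, a pseudocompact module is the filtered inverse limit $M=\varprojlim M/U$ over its open submodules, each quotient finite-dimensional; because $k$ is discrete, a continuous functional has open kernel and hence factors through some $M/U$, giving $M^{\vee}=\Hom_k^{\text{cts}}(M,k)=\varinjlim (M/U)^{*}$, a filtered union of finite-dimensional duals, which is manifestly smooth. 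In both directions the contragredient action converts a smooth $G$-action into a continuous $\Omega$-action and back, so the two functors land in the asserted categories. I expect the main obstacle to lie exactly here, in the topological bookkeeping: checking that the pointwise-convergence topology agrees with the inverse-limit topology, and that every continuous functional factors through an open submodule. Once these are secured, everything else is formal.

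Then exactness is immediate. Viewed on $\Rep_k^{\infty}(G)$ the functor is $\Hom_k(-,k)$, which is exact on all $k$-vector spaces as $k$ is a field; one need only observe that a short exact sequence of smooth representations dualizes to a strict short exact sequence of pseudocompact modules. On $\Mod(\Omega)$ I would combine exactness of finite-dimensional duality with exactness of filtered colimits, using $M^{\vee}=\varinjlim(M/U)^{*}$, and invoke exactness of filtered inverse limits in $\Mod(\Omega)$ (cf. \cite[p.~392]{Gab62}) to see that a strict short exact sequence is sent to a short exact sequence.

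Next I would establish that the two functors are mutually quasi-inverse by showing the canonical evaluation maps $V\to (V^{\vee})^{\vee}$ and $M\to(M^{\vee})^{\vee}$ are isomorphisms. This reduces to the finite level, where $W\cong(W^{*})^{*}$ is the standard biduality of a finite-dimensional $W$, once one notes that the constructions commute with the relevant (co)limits: $(V^{\vee})^{\vee}=\varinjlim (V_0^{*})^{*}=\varinjlim V_0=V$ and $(M^{\vee})^{\vee}=\varprojlim ((M/U)^{*})^{*}=\varprojlim M/U=M$. Naturality follows from functoriality of evaluation, and compatibility of the finite-level isomorphisms with the transition maps assembles them into the global ones. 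This yields the equivalence $(\cdot)^{\vee}\colon\Mod(\Omega)^{\op}\to\Rep_k^{\infty}(G)$.

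Finally, for $\Omega^{\vee}=\CC^{\infty}(G,k)$ I would feed $\Omega=\varprojlim_N k[G/N]$ into the colimit formula to get $\Omega^{\vee}=\varinjlim_N k[G/N]^{*}$. For each open normal $N$ the dual $k[G/N]^{*}=\Hom_k(k[G/N],k)$ is identified, through the basis dual to the group elements of $G/N$, with the space of all $k$-valued functions on the finite set $G/N$; the transition maps become inflation, so the colimit is the space $\CC^{\infty}(G,k)$ of locally constant functions. A direct verification that the contragredient action $(g\cdot\phi)(\omega)=\phi(g^{-1}\omega)$ corresponds to left translation then promotes this to an isomorphism of smooth $G$-representations.
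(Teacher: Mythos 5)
Your argument is correct, but it is worth noting that the paper does not actually prove this statement: its entire proof is the citation ``Apart from exactness this is essentially [Koh17, Thm.~1.5] in the case of a compact group.'' You instead give a self-contained proof by the standard device of writing a smooth representation as the filtered union $V=\varinjlim V_0$ of its finite-dimensional $G$-stable subspaces and a pseudocompact module as $M=\varprojlim M/U$ over open submodules, and then transporting finite-dimensional biduality along these (co)limits; this is essentially the content of the result Kohlhaase proves, so the two routes agree in substance even though the paper delegates it. What your write-up buys is transparency about where the real work sits --- the identification of the pointwise-convergence topology with the inverse-limit topology, and the fact that a continuous functional into discrete $k$ has open kernel --- and it makes the exactness claim (which the paper asserts without reference) visible as a consequence of exactness of $\Hom_k(-,k)$ on vector spaces together with exactness of the relevant filtered (co)limits. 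Two points you compress that deserve a sentence each if this were written out in full: surjectivity of $M^{\vee}\to (M')^{\vee}$ for a closed submodule $M'\subset M$ requires observing that the open submodules $U\cap M'$ are cofinal in $M'$ (so a continuous functional on $M'$ factors through some finite-dimensional $M'/(U\cap M')\hookrightarrow M/U$ and extends linearly), and one should check that the contragredient action on $V^{\vee}$ is jointly continuous for the inverse-limit topology --- though the paper itself also asserts this last point without proof in its setup.
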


\begin{proof}
Apart from exactness this is essentially \cite[Thm.~1.5]{Koh17} in the case of a compact group.
\end{proof}

In particular we infer that $\Rep_k^{\infty}(G)$ admits arbitrary coproducts (direct sums), that filtered colimits are exact, and it has enough injectives. In fact 
$\Rep_k^{\infty}(G)$ is a {\it{Grothendieck}} category since it has a generator $Y=\oplus_N \ind_N^G(1)$. Indeed, Frobenius reciprocity 
$\Hom_G(Y,V)=\prod_N V^N$ shows the functor $\Hom_G(Y,-)$ is faithful. 

Because $(\cdot)^{\vee}$ is exact it preserves quasi-isomorphisms and induces an equivalence on the level of derived categories. Throughout the paper we let $D(G)=D(\Rep_k^{\infty}(G))$ and $D(\Omega)=D(\Mod(\Omega))$. Similarly for the category of complexes $C(G)$ and the homotopy category $K(G)$. The same for $\Omega$.

\begin{cor}\label{pd}
The functors $(\cdot)^{\vee}$ define an equivalence of triangulated categories $D(\Omega)^{\op}\overset{\sim}{\longrightarrow} D(G)$. 
\end{cor}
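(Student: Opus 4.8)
The plan is to upgrade the abelian-category equivalence of Theorem \ref{pont} to the level of derived categories by hand, using crucially that the Pontryagin duality functors are \emph{exact}, and then to reconcile the two bookkeepings of opposites via the compatibility $D(\mathcal{A}^{\op})\overset{\sim}{\longrightarrow}D(\mathcal{A})^{\op}$ recorded in Section \ref{not}. Concretely I would regard $(\cdot)^{\vee}$ as a covariant exact equivalence $F\colon \Mod(\Omega)^{\op}\overset{\sim}{\longrightarrow}\Rep_k^{\infty}(G)$, with exact quasi-inverse $F^{-1}$ furnished by the other Pontryagin dual, and then produce from $F$ an equivalence $D(\Mod(\Omega)^{\op})\overset{\sim}{\longrightarrow}D(G)$ that I can compose with the equivalence from Section \ref{not}.

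First I would apply $F$ termwise to complexes, obtaining a functor $C(\Mod(\Omega)^{\op})\to C(G)$ which manifestly commutes with the shift functor and carries mapping-cone triangles to mapping-cone triangles; hence it already descends to a triangulated functor $K(\Mod(\Omega)^{\op})\to K(G)$ on homotopy categories. The key point is that $F$ then passes to the derived categories with no need to derive it: because $F$ is exact it commutes with the formation of cohomology, so it sends acyclic complexes to acyclic complexes and therefore quasi-isomorphisms to quasi-isomorphisms. By the universal property of the Verdier localization it thus induces a triangulated functor $D(\Mod(\Omega)^{\op})\to D(G)$. Running the same argument for $F^{-1}$ gives a triangulated functor in the reverse direction, and the natural isomorphisms $F^{-1}F\simeq\Id$ and $FF^{-1}\simeq\Id$ of Theorem \ref{pont} descend to natural isomorphisms of the induced functors, so $F$ induces an equivalence of triangulated categories $D(\Mod(\Omega)^{\op})\overset{\sim}{\longrightarrow}D(G)$. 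Precomposing with the canonical equivalence $D(\Omega)^{\op}=D(\Mod(\Omega))^{\op}\overset{\sim}{\longrightarrow}D(\Mod(\Omega)^{\op})$ from Section \ref{not} then yields the asserted $D(\Omega)^{\op}\overset{\sim}{\longrightarrow}D(G)$.

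There is no serious obstacle here, since exactness renders the whole argument formal; the single point that genuinely uses exactness — rather than being pure categorical manipulation — is the preservation of acyclicity on possibly unbounded complexes, which is exactly what allows me to work with the termwise functor directly and to avoid any $K$-injective or $K$-projective resolutions. I would therefore take care to state the preservation of quasi-isomorphisms cleanly, and to note that the bookkeeping of shifts and the reversal of triangle directions under $(\cdot)^{\op}$ has already been absorbed into the equivalence $D(\mathcal{A}^{\op})\simeq D(\mathcal{A})^{\op}$ cited from Section \ref{not}, so that no separate verification of triangulatedness for the opposite structure is required.
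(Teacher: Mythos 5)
Your argument is exactly the paper's: the paper derives the corollary in one line by noting that exactness of $(\cdot)^{\vee}$ implies preservation of quasi-isomorphisms, hence an induced equivalence of derived categories, with the identification $D(\Omega)^{\op}\simeq D(\Mod(\Omega)^{\op})$ handled by the remark in Section \ref{not}. Your proposal simply spells out the same steps in more detail and is correct.
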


Here the opposite triangulated category $D(\Omega)^{\op}$ is equivalent to $D(\Mod(\Omega)^{\op})$ as explained in the notation section \ref{not}.

\section{Keller's Morita theorem for DGA's}\label{morit}

If $\mathcal{A}$ is an abelian category with coproducts which admits a compact (i.e., $\Hom_{\mathcal{A}}(P,-)$ commutes with coproducts) projective generator $P$, then a theorem of  Morita states that $\Hom_{\mathcal{A}}(P,-)$ gives an equivalence $\mathcal{A} \longrightarrow \Mod\big(\End_{\mathcal{A}}(P)^{\op}\big)$ onto the category of {\it{left}} modules over 
the algebra $\End_{\mathcal{A}}(P)^{\op}$. 

In \cite{Kel94} Keller proved an analogue of Morita's theorem for differential graded algebras (from now on referred to as DGA's). Here we will follow \cite{Kel07} in exposing his result. 
First recall that a DGA over a field $k$ is a $\Z$-graded associative $k$-algebra $\HH^{\bullet}=\bigoplus_{i \in \Z} \HH^i$ with a degree one $k$-linear operator 
$d: \HH^{\bullet} \rightarrow \HH^{\bullet}$ such that $d^2=0$ which satisfies the Leibniz rule
$$
d(ab)=d(a)b+(-1)^{\deg a}ad(b)
$$
for all $a, b \in \HH^{\bullet}$ with $a$ homogeneous. A DG left $\HH^{\bullet}$-module is a $\Z$-graded $\HH^{\bullet}$-module $M^{\bullet}=\bigoplus_{i \in \Z}M^i$ with a $k$-linear differential $d: M^{\bullet}\rightarrow M^{\bullet}$ of degree one which satisfies an analogous Leibniz rule. In this situation the cohomology algebra
$h^*(\HH^{\bullet})=\bigoplus_{i \in \Z} h^i(\HH^{\bullet})$ is a graded algebra, and $h^*(M^{\bullet})=\bigoplus_{i \in \Z} h^i(M^{\bullet})$ is a graded (left) module over $h^*(\HH^{\bullet})$. 

We follow the conventions of \cite[p.~68]{BL94} regarding DGA's and DG-modules all being unital. More precisely we always assume the existence of a multiplicative identity 
$1\in \HH^0$ such that $d(1)=0$. Then $h^*(\HH^{\bullet})$ inherits a multiplicative identity $1\in h^0(\HH^{\bullet})$. Moreover we assume $1 \in \HH^0$ acts trivially on all our DG-modules $M^{\bullet}$.

\begin{rem}\label{mor}
Our main examples of DG algebras and modules will arise as follows. Let $\mathcal{A}$ be an abelian category and consider two (unbounded) complexes 
$I^{\bullet}$ and $J^{\bullet}$ in $C(\mathcal{A})$. The associated {\it{morphism}} complex $\Hom_{\mathcal{A}}^{\bullet}(I^{\bullet},J^{\bullet})$ of abelian groups has $i$th component
(cf. \cite[p.~63]{Har66})
$$
\Hom_{\mathcal{A}}^{i}(I^{\bullet},J^{\bullet})=\prod_{q \in \Z} \Hom_{\mathcal{A}}(I^q,J^{q+i}).
$$
An element hereof is just a collection of morphisms $a=(a_q)_{q\in \Z}$ where each $a_q$ shifts the degrees by $i$ (there is no compatibility requirement with the differentials). 
The differentials are defined below. 
$$
\Hom_{\mathcal{A}}^{i}(I^{\bullet},J^{\bullet})\overset{d}{\longrightarrow} \Hom_{\mathcal{A}}^{i+1}(I^{\bullet},J^{\bullet}) \y \y \y
d(a)_q=d \circ a_q - (-1)^ia_{q+1}\circ d.
$$
If we take the {\it{same}} complex $J^{\bullet}=I^{\bullet}$ the morphism complex $\Hom_{\mathcal{A}}^{\bullet}(I^{\bullet},I^{\bullet})$ carries the structure of a DGA over $\Z$. 
Two homogeneous elements $a$ and $b$, of degree $i$ and $j$ respectively, are multiplied by the rule
$$
(ab)_q=(-1)^{ij}a_{q+j}\circ b_q.
$$
When $J^{\bullet}$ is arbitrary a similar formula endows $\Hom_{\mathcal{A}}^{\bullet}(I^{\bullet},J^{\bullet})$ with the structure of a DG right module over $\End_{\mathcal{A}}^{\bullet}(I^{\bullet})$; or what amounts to the same -- a DG left module over the opposite $\End_{\mathcal{A}}^{\bullet}(I^{\bullet})^{\op}$.

It follows straight from the definition of the differentials in the morphism complex that its $i$-cycles correspond to morphisms of complexes 
$I^{\bullet}\rightarrow J^{\bullet+i}$ (recall that the translation functor on $D(\mathcal{A})$ also changes signs of the differentials) and its $i$-boundaries are null-homotopic, cf. \cite[p.~64]{Har66}. Consequently
$$
h^i\big(\Hom_{\mathcal{A}}^{\bullet}(I^{\bullet},J^{\bullet})\big)=\Hom_{K(\mathcal{A})}(I^{\bullet},J^{\bullet+i}).
$$
When $\mathcal{A}$ has enough injectives and $J^{\bullet}$ is bounded below one can replace $K(\mathcal{A})$ by the derived category above, and identify 
the right-hand side with the $i$th hyperext $\Ext_{\mathcal{A}}^i(I^{\bullet},J^{\bullet})$, cf. \cite[Thm.~6.4]{Har66}.
\end{rem}

We return to a general DGA $\HH^{\bullet}$ over a field $k$, and the category of DG left $\HH^{\bullet}$-modules. There is an obvious notion of quasi-isomorphism, and by localization one obtains the derived category $D(\HH^{\bullet})$ which naturally becomes a triangulated category. As for abelian categories, morphisms in the derived category are easier to work with (as equivalence classes of roofs) if one localizes the homotopy category. In the DGA setup the morphisms of $K(\HH^{\bullet})$ are homotopy classes of maps between DG modules, where
$f: M^{\bullet}\rightarrow N^{\bullet}$ is null-homotopic if there is a map $r: M^{\bullet}\rightarrow N^{\bullet-1}$ of graded $\HH^{\bullet}$-modules for which $f=d \circ r +r \circ d$.
A streamlined exposition of these constructions is in \cite[Ch.~10]{BL94}.

Keller proved that the triangulated categories equivalent to some $D(\HH^{\bullet})$ can be characterized by a short list of properties. First, $D(\HH^{\bullet})$ is always {\it{algebraic}}.
Recall that a triangulated category $\mathcal{D}$ is said to be algebraic if it is equivalent to the stable category of a Frobenius category. We will not recall these notions here but just refer to \cite[Ch.~7]{Kra07} where this definition is discussed in great detail. What we will use later is the following simple criterion of Krause, see \cite[Lem.~7.5]{Kra07}.

\begin{lem}\label{kra}
A triangulated category $\mathcal{D}$ is algebraic if and only if there is an additive category $\mathcal{A}$ and a fully faithful exact functor 
$\mathcal{D}\longrightarrow K(\mathcal{A})$.
\end{lem}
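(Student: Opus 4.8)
The plan is to prove both implications by comparing $\mathcal{D}$ with the homotopy category $K(\mathcal{A})$, exploiting the basic observation that $K(\mathcal{A})$ is \emph{itself} the stable category of a Frobenius category. Concretely, I would equip the category $C(\mathcal{A})$ of complexes with the exact structure whose admissible short exact sequences are those splitting in each degree. For this structure the projective-injective objects are exactly the contractible complexes, there are visibly enough of them, and quotienting $C(\mathcal{A})$ by the morphisms factoring through a contractible complex recovers $K(\mathcal{A})$. Hence $C(\mathcal{A})$ is Frobenius and $K(\mathcal{A})=\mathcal{S}(C(\mathcal{A}))$ is algebraic. This reduces the lemma to a statement about full triangulated subcategories of stable categories.

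For the implication ``$\Leftarrow$'', suppose $F:\mathcal{D}\rightarrow K(\mathcal{A})$ is fully faithful and exact. Its essential image $\mathcal{U}$ is then a full triangulated subcategory of $\mathcal{S}(\mathcal{B})$, where $\mathcal{B}=C(\mathcal{A})$, and $\mathcal{D}\simeq\mathcal{U}$ as triangulated categories; so it would suffice to show that any such $\mathcal{U}$ is algebraic. Let $q:\mathcal{B}\rightarrow\mathcal{S}(\mathcal{B})$ be the projection and set $\mathcal{E}=q^{-1}(\mathcal{U})$, the full subcategory of objects $X$ with $q(X)\in\mathcal{U}$. Since every projective-injective of $\mathcal{B}$ maps to $0\in\mathcal{U}$, the subcategory $\mathcal{E}$ contains all of them. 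The key point is that $\mathcal{E}$ is again Frobenius: if $0\rightarrow X'\rightarrow X\rightarrow X''\rightarrow 0$ is admissible with $X',X''\in\mathcal{E}$, the induced triangle $q(X')\rightarrow q(X)\rightarrow q(X'')\rightarrow$ forces $q(X)\in\mathcal{U}$, so $\mathcal{E}$ is extension-closed and inherits the exact structure; and for $X\in\mathcal{E}$ an admissible epic $P\twoheadrightarrow X$ with $P$ projective-injective has kernel $K$ satisfying $q(K)\cong q(X)[-1]\in\mathcal{U}$, giving enough projectives inside $\mathcal{E}$ (and dually enough injectives). By construction the induced functor $\mathcal{S}(\mathcal{E})\rightarrow\mathcal{S}(\mathcal{B})$ is fully faithful with image $\mathcal{U}$, whence $\mathcal{D}\simeq\mathcal{U}\simeq\mathcal{S}(\mathcal{E})$ is algebraic.

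For the implication ``$\Rightarrow$'', I would write $\mathcal{D}\simeq\mathcal{S}(\mathcal{E})$ for a Frobenius category $\mathcal{E}$ and take $\mathcal{A}$ to be the full additive subcategory of projective-injective objects of $\mathcal{E}$. The idea is to send each object to a \emph{complete resolution}: splicing a projective resolution and an injective resolution (which are of the same type since $\mathcal{E}$ is Frobenius) produces an acyclic complex $\widehat{X}$ of objects of $\mathcal{A}$ having $X$ as a cocycle. Choosing one such $\widehat{X}$ for each $X$ and lifting morphisms to chain maps (unique up to homotopy) would yield a well-defined exact functor $\mathcal{S}(\mathcal{E})\rightarrow K(\mathcal{A})$; it is exact because the complex shift of a complete resolution is a complete resolution of the cosyzygy, which is precisely the translation on $\mathcal{S}(\mathcal{E})$.

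The hard part will be the full faithfulness of this last functor, i.e.\ the identification $\underline{\Hom}_{\mathcal{E}}(X,Y)\cong\Hom_{K(\mathcal{A})}(\widehat{X},\widehat{Y})$. This is the degree-zero instance of the comparison between stable morphisms and Tate hypercohomology: a chain map between complete resolutions is pinned down up to homotopy by its behaviour in a single degree, and a diagram chase should show that it is null-homotopic exactly when the induced map $X\rightarrow Y$ factors through a projective-injective object. Granting this, $F$ is fully faithful and exact, which finishes the argument. I expect this comparison — rather than the formal subcategory construction used in the reverse direction — to demand the most care.
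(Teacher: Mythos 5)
Your argument is correct, and it is essentially the proof of the result the paper is quoting: the paper itself gives no proof of Lemma \ref{kra}, citing it directly as \cite[Lem.~7.5]{Kra07}, and Krause's proof there proceeds exactly as you do --- realizing $K(\mathcal{A})$ as the stable category of $C(\mathcal{A})$ with the degreewise split exact structure, pulling back a full triangulated subcategory to an extension-closed Frobenius subcategory for one direction, and embedding $\mathcal{S}(\mathcal{E})$ into $K$ of the projective-injectives via complete resolutions for the other. The point you flag as delicate (full faithfulness of the complete-resolution functor) is indeed where the work lies, and your Tate-cohomology-style comparison is the standard way to handle it.
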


Secondly, $D(\HH^{\bullet})$ admits (set-indexed) coproducts which can be computed as direct sums of DG $\HH^{\bullet}$-modules, and the module $\HH^{\bullet}$ is a {\it{compact generator}}. Recall that, when $\mathcal{D}$ admits coproducts, an object $C\in \Ob(\mathcal{D})$ is said to be compact if the functor $\Hom_{\mathcal{D}}(C,-)$ commutes with coproducts. We say that $C$ generates $\mathcal{D}$ if $X=0$ precisely when $\Hom_{\mathcal{D}}(C,T^iX)=0$ for all $i \in \Z$.

Keller's Morita theorem shows that these properties characterize the categories $D(\HH^{\bullet})$. 

\begin{thm}\label{kel}
Let $\mathcal{D}$ be an algebraic triangulated category which admits arbitrary coproducts and a compact generator $C\in \Ob(\mathcal{D})$. Then there is a differential graded algebra
$\HH_C^{\bullet}=R\Hom_{\mathcal{D}}(C,C)^{\op}$ and an equivalence of triangulated categories $\mathcal{D} \longrightarrow D(\HH_C^{\bullet})$.
\end{thm}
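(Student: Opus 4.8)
The plan is to manufacture, out of the algebraic structure, a concrete differential graded model of $\mathcal{D}$ in which the compact generator $C$ is represented by an explicit object, read off the DGA $\HH_C^{\bullet}$ as the endomorphism complex of that object, and then prove that the tautological functor $R\Hom_{\mathcal{D}}(C,-)$ is an equivalence by a formal dévissage using compactness and coproducts. The genuinely hard input is the \emph{existence} of the DG model; once that is available the verification of the equivalence is soft.

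First I would exploit algebraicity. By lemma \ref{kra} there is a fully faithful exact embedding $\mathcal{D}\hookrightarrow K(\mathcal{A})$, from which one extracts a DG category $\mathcal{E}$ (the DG-enrichment of a suitable category of "cofibrant" representatives) whose homotopy category is $\mathcal{D}$ and whose morphism complexes compute $R\Hom$, in the sense that $h^i\mathcal{E}(X,Y)=\Hom_{\mathcal{D}}(X,T^iY)$; concretely these are the morphism complexes recalled in remark \ref{mor}. Fixing a representative $\tilde C$ of the generator $C$, I set $\HH_C^{\bullet}:=\mathcal{E}(\tilde C,\tilde C)^{\op}$, whose cohomology is the graded algebra $\Ext_{\mathcal{D}}^*(C,C)^{\op}$. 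The functor to exhibit is then
$$
F:=R\Hom_{\mathcal{D}}(C,-):\mathcal{D}\longrightarrow D(\HH_C^{\bullet}),\qquad X\longmapsto \mathcal{E}(\tilde C,\tilde X),
$$
landing in DG right modules over $\mathcal{E}(\tilde C,\tilde C)$, i.e. left modules over its opposite. By construction $F$ is exact (triangulated), it carries coproducts in $\mathcal{D}$ to direct sums of DG modules, and $F(C)=\HH_C^{\bullet}$ is the free rank-one module.

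Next I would check that $F$ is an isomorphism on graded homomorphisms out of the generator and then bootstrap. On $C$ itself one has, for every $i$,
$$
\Hom_{\mathcal{D}}(C,T^iC)\overset{\sim}{\longrightarrow}\Hom_{D(\HH_C^{\bullet})}(\HH_C^{\bullet},T^i\HH_C^{\bullet})=h^i(\HH_C^{\bullet})=\Ext_{\mathcal{D}}^i(C,C),
$$
tautologically from the cohomology computation above. Now consider the class of $Y\in\Ob(\mathcal{D})$ for which $\Hom_{\mathcal{D}}(C,T^iY)\to\Hom_{D(\HH_C^{\bullet})}(\HH_C^{\bullet},T^iFY)$ is an isomorphism for all $i$. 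Since $C$ is compact and $\HH_C^{\bullet}$ is a compact generator of $D(\HH_C^{\bullet})$, both sides convert coproducts into coproducts; as $F$ is exact this class is closed under shifts, cones and coproducts, and it contains $C$, hence (as $C$ generates) equals $\mathcal{D}$. A second, entirely analogous dévissage in the first variable — using $\Hom(\coprod X_j,Y)=\prod\Hom(X_j,Y)$ on both sides — upgrades this to full faithfulness of $F$ on all of $\mathcal{D}$.

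Finally, essential surjectivity is formal: being fully faithful, exact and coproduct-preserving, $F$ has essential image a localizing subcategory of $D(\HH_C^{\bullet})$ containing the compact generator $F(C)=\HH_C^{\bullet}$, and the smallest localizing subcategory containing a compact generator is the whole category; hence $F$ is an equivalence. I expect the main obstacle to be the very first step: without the DG enhancement supplied by algebraicity, $R\Hom_{\mathcal{D}}(C,C)$ exists only as a bare graded $\Ext$-algebra, with no well-defined multiplicative DG structure and no reason for $F$ to be realized by an honest, triangulated, coproduct-preserving functor. Lemma \ref{kra} is precisely what guarantees the embedding into $K(\mathcal{A})$ from which such a model is built; everything after that is the soft localizing-subcategory argument above.
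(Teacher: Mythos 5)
The paper does not actually prove Theorem \ref{kel}: its ``proof'' is the one-line citation to \cite[Thm.~8.7(b)]{Kel07}, so there is no internal argument to measure you against. Your sketch is, in substance, the standard proof of Keller's Morita theorem as it appears in the cited sources (\cite{Kel94}, \cite{Kel07}): extract a DG enhancement from the embedding $\mathcal{D}\hookrightarrow K(\mathcal{A})$ supplied by Lemma \ref{kra}, set $\HH_C^{\bullet}=\mathcal{E}(\tilde C,\tilde C)^{\op}$, and run the localizing-subcategory d\'evissage on $\Hom(C,T^i(-))$ using compactness in both variables. The outline is correct, and you have correctly identified that the only non-formal input is the enhancement. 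Two points deserve to be made explicit rather than left implicit. First, your step ``closed under shifts, cones and coproducts, contains $C$, hence (as $C$ generates) equals $\mathcal{D}$'' uses generation in the sense that the smallest localizing subcategory containing $C$ is all of $\mathcal{D}$, whereas the hypothesis only gives generation in the Hom-orthogonality sense ($\Hom_{\mathcal{D}}(C,T^iX)=0$ for all $i$ implies $X=0$); passing from the latter to the former for a \emph{compact} $C$ is a theorem of Neeman, not a formality --- the paper itself flags exactly this point in the discussion following Proposition \ref{gen}. Second, your claim that $F$ ``by construction'' carries coproducts to direct sums is really a consequence of compactness of $C$ (the natural map $\coprod F(X_j)\to F(\coprod X_j)$ is only a quasi-isomorphism, detected on cohomology via $\Hom_{\mathcal{D}}(C,T^i(-))$), so compactness enters earlier than your write-up suggests. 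With those two caveats supplied, your argument is a faithful reconstruction of the proof the paper outsources to Keller.
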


\begin{proof}
This is part of \cite[Thm.~8.7(b)]{Kel07} (using a different notation).
\end{proof}

In the theorem $R\Hom_{\mathcal{D}}(C,C)$ is a certain complex with $i$th cohomology $\Hom_{\mathcal{D}}(C,T^iC)$. We will only apply the theorem to situations where $\mathcal{D}=D(\mathcal{A})$ for some abelian category $\mathcal{A}$ with enough injectives and the compact generator is an object $C$ of  $\mathcal{A}$ thought of as the complex $C[0]$ concentrated in degree zero. Upon choosing an injective resolution $C[0]\rightarrow I^{\bullet}$  the associated DGA is $\mathcal{H}^{\bullet}=\End_{\mathcal{A}}^{\bullet}(I^{\bullet})^{\op}$ as defined in Remark \ref{mor}. In this situation the functor 
$D(\mathcal{A})\rightarrow D(\mathcal{H}^{\bullet})$ from Theorem \ref{kel} is the formation of the morphism complex $\Hom_{\mathcal{A}}^{\bullet}(I^{\bullet},-)$ as above -- after taking a $K$-injective resolution.

The cohomology algebra
$h^*(\mathcal{H}^{\bullet})$ has the following nice description in terms of the Yoneda algebra.
$$
h^*(\mathcal{H}^{\bullet})=\Ext_{\mathcal{A}}^*(C,C)^{\op}=\bigoplus_{i \geq 0} \Ext_{\mathcal{A}}^i(C,C).
$$
Similarly for the cohomology of $\Hom_{\mathcal{A}}^{\bullet}(I^{\bullet},-)$, cf. \cite[Cor.~6.5]{Har66} which compares $\Ext^i$ to hyperext.

\section{Schneider's equivalence for $p$-adic Lie groups}\label{lie}

In this section we assume $k$ is a field of characteristic $p>0$ and that $G$ is a torsionfree pro-$p$ group. Moreover, we will assume $G$ is a $p$-adic Lie group
of dimension $d=\dim(G)$ over $\Q_p$ say. 

By a result of Serre such $G$ have {\it{finite}} cohomological $p$-dimension $\text{cd}_p(G)=d$, cf. \cite[Cor.~(1)]{Ser65}. In that paper Serre also mentions that the 
$p$-dimension $\text{cd}_p$ is infinite if there are elements of order $p$. We will need this finiteness result to guarantee certain derived functors are defined on all of
$D(G)$.

Furthermore, our assumptions on $G$ ensure that $\Omega=k[\![G]\!]$ is a Noetherian ring which is local with maximal ideal $\m=\ker(a)$ since $G$ is pro-$p$. In particular the topology on $\Omega$, which is defined by the ideals $I_N=\ker\big(\Omega\rightarrow k[G/N]\big)$, coincides with the $\m$-adic topology. See \cite[Prop.~19.7, Lem.~19.8, Thm.~33.4]{Sch11} for these standard results about $p$-adic Lie groups.

Schneider's idea in \cite{Sch15} was to apply Keller's theorem \ref{kel} to $D(G)$. In fact Schneider works in a more general setup with a pair $(G,I)$ where $G$ is any $p$-adic Lie group (not necessarily compact) and $I \subset G$ is a compact open subgroup which is torsionfree and pro-$p$. Here we will only be interested in the case $G=I$. The results in this section are entirely due to Schneider. We hope our exposition will help the reader to quickly grasp the overall strategy behind \cite{Sch15}.

To apply theorem \ref{kel} we first have to observe that $D(G)$ is an {\it{algebraic}} triangulated category. 

\begin{lem}\label{alg}\cite[Lem.~8]{Sch15}
$D(G)$ is algebraic\footnote{The same proof shows that the derived category of {\it{any}} Grothendieck category is algebraic.}.
\end{lem}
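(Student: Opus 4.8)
The plan is to verify the criterion of Lemma \ref{kra}, which reduces algebraicity to exhibiting an additive category $\mathcal{A}$ together with a fully faithful exact functor $D(G)\longrightarrow K(\mathcal{A})$. Recall that we observed above that $\Rep_k^{\infty}(G)$ is a Grothendieck category, since it has the generator $Y=\oplus_N \ind_N^G(1)$. Nothing in what follows will use the group $G$ beyond this single fact, which is precisely why the same argument applies verbatim to the derived category of an \emph{arbitrary} Grothendieck category, as asserted in the footnote. I would therefore take $\mathcal{A}=\Rep_k^{\infty}(G)$ itself and build the required functor out of $K$-injective resolutions.

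Concretely, the first step is to invoke the existence of $K$-injective resolutions for arbitrary (unbounded) complexes over a Grothendieck category. Let $K_{\mathrm{inj}}\subset K(G)$ denote the full triangulated subcategory of $K$-injective complexes. The second step is the standard fact that the localization functor $K(G)\rightarrow D(G)$ restricts to an equivalence $K_{\mathrm{inj}}\overset{\sim}{\longrightarrow} D(G)$: it is essentially surjective precisely because every complex admits a $K$-injective resolution, and it is fully faithful because the morphisms in $D(G)$ \emph{into} a $K$-injective complex coincide with homotopy classes of maps (this is the defining property of $K$-injectivity). Choosing a quasi-inverse $D(G)\overset{\sim}{\longrightarrow} K_{\mathrm{inj}}$ and post-composing with the inclusion $K_{\mathrm{inj}}\hookrightarrow K(G)$ then yields an exact functor $D(G)\longrightarrow K(G)$. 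It is fully faithful, being the composite of an equivalence with the inclusion of a full triangulated subcategory. Lemma \ref{kra} now applies with $\mathcal{A}=\Rep_k^{\infty}(G)$ and gives that $D(G)$ is algebraic.

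The only genuinely nontrivial input, and hence the step I expect to be the main obstacle, is the existence of $K$-injective resolutions of \emph{unbounded} complexes in a Grothendieck category; everything else is formal. In the bounded-below case this is classical, but for the unbounded complexes occurring in $D(G)$ one must appeal to the general existence theorems of Spaltenstein, Serp\'e, and Alonso Tarr\'io--Jerem\'ias L\'opez--Souto Salorio. Since we are free to cite these, the proof is short: the content of Lemma \ref{alg} is really just the combination of the Grothendieck property of $\Rep_k^{\infty}(G)$, established after Theorem \ref{pont}, with Krause's reformulation of algebraicity.
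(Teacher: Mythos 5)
Your proposal is correct and follows essentially the same route as the paper: both verify Krause's criterion (Lemma \ref{kra}) with $\mathcal{A}=\Rep_k^{\infty}(G)$ by using the existence of $K$-injective resolutions in a Grothendieck category to obtain the equivalence $K_{\mathrm{inj}}(G)\overset{\sim}{\longrightarrow} D(G)$, whose quasi-inverse composed with the inclusion into $K(G)$ gives the required fully faithful exact functor. The paper cites the Stacks Project and Spaltenstein for the unbounded $K$-injective resolutions, exactly the input you identify as the one nontrivial step.
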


\begin{proof}
Using Krause's lemma \ref{kra} this follows rather immediately from the existence of 
$K$-injective resolutions. Recall that a complex $V^{\bullet}$ is said to be $K$-injective if 
$$
\Hom_{K(G)}(U^{\bullet},V^{\bullet}) \overset{\sim}{\longrightarrow} \Hom_{D(G)}(U^{\bullet},V^{\bullet})
$$
for all complexes $U^{\bullet}$ (see \cite[Prop.~1.5]{Spa88}). Since $\Rep_k^{\infty}(G)$ is a Grothendieck category (meaning filtered colimits are exact and it has a generator $Y$) 
every complex $V^{\bullet}$ has a $K$-injective resolution, cf. \cite[Thm.~19.12.6]{Sta18}. That is, there is a $K$-injective complex $I^{\bullet}$ and a quasi-isomorphism $V^{\bullet}\rightarrow I^{\bullet}$. The $K$-injective complexes form a full triangulated subcategory $K_{\text{inj}}(G)$ of the homotopy category (see \cite[Prop.~1.3]{Spa88}) and there is a triangle equivalence
$$
K_{\text{inj}}(G) \overset{\sim}{\longrightarrow} D(G).
$$
Once and for all we fix a quasi-inverse ${\bf{i}}$. This shows that there is a fully faithful exact functor $D(G)\rightarrow K(G)$ and therefore $D(G)$ is algebraic by lemma \ref{kra}.
\end{proof}

It is easy to see that $D(G)$ has arbitrary coproducts. They are just direct sums of complexes, cf. \cite[Rem.~2]{Sch15}. The next step is to show the trivial representation is a compact generator for $D(G)$.
Occasionally when we write $k$ below we mean the trivial representation $G \overset{1}{\longrightarrow} k^{\times}$ concentrated in degree $0$. When there is a risk of confusion
we will write $k[0]$ instead.

\begin{lem}\label{cpt}\cite[Lem.~4]{Sch15}
The trivial representation $k$ is a compact object of $D(G)$. That is, the natural map
$$
\bigoplus_{j\in J} \Hom_{D(G)}(k,V_j^{\bullet}) \overset{\sim}{\longrightarrow} \Hom_{D(G)}(k,\bigoplus_{j\in J} V_j^{\bullet})
$$
is an isomorphism of abelian groups for any collection of complexes $(V_j^{\bullet})_{j \in J}$. 
\end{lem}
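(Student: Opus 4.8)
The plan is to reinterpret the statement cohomologically and then establish it by dévissage, with the finite cohomological dimension of $G$ as the decisive input. Since $\Rep_k^{\infty}(G)$ has $K$-injective resolutions (Lemma \ref{alg}), choosing one $V^{\bullet}\to I^{\bullet}$ and arguing as in Remark \ref{mor} one identifies
$$
\Hom_{D(G)}(k, V^{\bullet}[i]) = h^i\big(\Hom_G^{\bullet}(k, I^{\bullet})\big) = h^i\big((I^{\bullet})^G\big) =: \mathbb{H}^i(G, V^{\bullet}),
$$
the $i$-th hypercohomology, which for $V^{\bullet}=M[0]$ is ordinary group cohomology $H^i(G,M)$. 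Recalling that coproducts in $D(G)$ are computed as termwise direct sums of complexes, the lemma becomes the statement that $\mathbb{H}^0(G,-)$ carries direct sums of complexes to direct sums of $k$-vector spaces. I would prove the formally stronger assertion that $\mathbb{H}^i(G,-)$ commutes with arbitrary direct sums for \emph{every} $i\in\Z$ (the case $i=0$ being the lemma), since the dévissage below naturally proceeds through all shifts.

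First I would treat a single smooth module $M$. The functor $H^i(G,-)$ commutes with filtered colimits of smooth modules -- a standard property of continuous cohomology of a profinite group, visible from $H^i(G,M)=\varinjlim_N H^i(G/N, M^N)$ with $N$ running over the open normal subgroups, using that $(-)^N$ and $\varinjlim_N$ commute with direct sums and that each finite group $G/N$ has cohomology computed by a complex of finitely generated free modules. As $H^i(G,-)$ is additive, commuting with filtered colimits forces commutation with arbitrary direct sums, giving $H^i(G,\bigoplus_j M_j)=\bigoplus_j H^i(G,M_j)$. Next, for a complex $V^{\bullet}$ that is \emph{bounded below} I would run the hypercohomology spectral sequence $E_2^{p,q}=H^p(G, h^q(V^{\bullet}))\Rightarrow \mathbb{H}^{p+q}(G,V^{\bullet})$, which converges strongly. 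Since $h^q$ commutes with direct sums of complexes automatically and $H^p(G,-)$ does so by the module case, the entire $E_2$-page -- and hence the abutment -- commutes with direct sums, settling the bounded below case.

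The crux is to remove the boundedness hypothesis, and here the torsionfreeness of $G$ enters through the finiteness $\text{cd}_p(G)=d$ recalled above. This means $R(-)^G$ has cohomological amplitude at most $d$, i.e. it sends $D^{\leq m}(G)$ into $D^{\leq m+d}$. Applied to the stupid truncation $\sigma_{<n}V^{\bullet}$ (degrees $<n$) this yields $\mathbb{H}^j(G,\sigma_{<n}V^{\bullet})=0$ for $j>n-1+d$, so the long exact sequence of the truncation triangle $\sigma_{\geq n}V^{\bullet}\to V^{\bullet}\to \sigma_{<n}V^{\bullet}$ shows that $\mathbb{H}^i(G,\sigma_{\geq n}V^{\bullet})\to \mathbb{H}^i(G,V^{\bullet})$ is an isomorphism as soon as $n\leq i-d-1$. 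The essential feature is that this bound depends only on $i$ and $d$, not on $V^{\bullet}$; since stupid truncation commutes with direct sums, one fixed $n$ simultaneously computes $\mathbb{H}^i$ of every $V_j^{\bullet}$ and of $\bigoplus_j V_j^{\bullet}$ through the bounded below complexes $\sigma_{\geq n}V_j^{\bullet}$. Invoking the bounded below case then gives the desired isomorphism in full generality.

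I expect the passage to unbounded complexes to be the main obstacle: for a functor of infinite cohomological dimension no uniform truncation degree $n$ exists and the conclusion genuinely fails, which is precisely why the hypothesis that $G$ be torsionfree (hence $\text{cd}_p(G)<\infty$) cannot be dispensed with. By contrast the module-level computation and the bounded below spectral sequence are routine.
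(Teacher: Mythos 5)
Your argument is correct, and it rests on the same two pillars as the paper's proof: that ordinary group cohomology $H^i(G,-)$ commutes with direct sums of smooth modules, and that $\text{cd}_p(G)=d<\infty$ is what allows passage to unbounded complexes. Where you differ is in the d\'evissage. The paper (following Schneider) invokes \cite[Cor.~5.3($\gamma$)]{Har66} to replace each $V_j^{\bullet}$ by a quasi-isomorphic complex $C_j^{\bullet}$ whose terms are acyclic for $\Gamma=(-)^G$; since a direct sum of $\Gamma$-acyclic objects is again $\Gamma$-acyclic and $\Gamma$ itself commutes with direct sums of smooth representations, $R\Gamma(\bigoplus_j V_j^{\bullet})$ is computed termwise by $\bigoplus_j \Gamma(C_j^{\bullet})$ and the isomorphism on $h^i$ is immediate. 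You instead handle the bounded-below case by the hypercohomology spectral sequence and then remove the boundedness by stupid truncation, using the amplitude bound $R\Gamma(D^{\leq m})\subset D^{\leq m+d}$ to produce a truncation degree depending only on $i$ and $d$, uniformly in $V^{\bullet}$. Both mechanisms draw on exactly the same finiteness input (indeed both ultimately cite the same result of Hartshorne); the acyclic-resolution route is shorter once one grants the existence of such resolutions for unbounded complexes, while your truncation argument makes the role of $\text{cd}_p(G)<\infty$ more visibly quantitative and isolates precisely where the unbounded case would break down without it.
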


\begin{proof}
We have to show the functor $\Hom_{D(G)}(k,-)$ commutes with direct sums. Note that for all $V^{\bullet}$
\begin{equation}\label{comp}
\begin{split}
\Hom_{D(G)}(k,V^{\bullet})&=\Hom_{D(G)}\big(k,{\bf{i}}(V^{\bullet})\big)\\
  &=\Hom_{K(G)}\big(k,{\bf{i}}(V^{\bullet})\big)\\
  &=h^0\big(\Hom_{G}^{\bullet}(k[0],{\bf{i}}(V^{\bullet})\big)\\
  &=h^0\big(\Hom_{G}(k,{\bf{i}}(V^{\bullet})\big)\\
  &=h^0\big({\bf{i}}(V^{\bullet})^G \big)\\
  &=h^0(G,V^{\bullet}).
\end{split}
\end{equation}
Here $h^i(G,V^{\bullet})$ denotes group hypercohomology, by which we mean the following. Consider the additive functor 
$\Gamma: \Rep_k^{\infty}(G) \rightarrow \text{Vec}_k$ taking $G$-invariants $(\cdot)^G$. Since $R^i\Gamma=0$ for $i>d=\text{cd}_p(G)$ the total derived functor
$R\Gamma$ is defined on all of $D(G)$ by \cite[Cor.~5.3($\gamma$)]{Har66} and is given by the composition
$$
D(G)\overset{{\bf{i}}}{\longrightarrow} K_{\text{inj}}(G) \longrightarrow K(\text{Vec}_k) \longrightarrow D(\text{Vec}_k)
$$
which takes $V^{\bullet}\mapsto {\bf{i}}(V^{\bullet})^G$. Group hypercohomology is then $h^i(G,V^{\bullet})=h^i\big(R\Gamma(V^{\bullet})\big)$. Note that negative $i$ are allowed 
here. Since (actual) group cohomology commutes with direct sums in $\Rep_k^{\infty}(G)$ a direct sum of $\Gamma$-acyclic objects is $\Gamma$-acyclic. After choosing 
quasi-isomorphisms $V_j^{\bullet}\rightarrow C_j^{\bullet}$ with complexes having $\Gamma$-acyclic terms it is then easy to see that 
$$
\bigoplus_{j\in J} h^i(G,V_j^{\bullet}) \overset{\sim}{\longrightarrow}h^i(G,\bigoplus_{j\in J} V_j^{\bullet}).
$$
With the observation (\ref{comp}) the case $i=0$ gives the compactness of $k$. See the proof of \cite[Lem.~3]{Sch15} for more details.
\end{proof}

The calculation (\ref{comp}) shows more generally that -- for any $V^{\bullet}$ and any integer $i \in \Z$ -- we have
$$
\Hom_{D(G)}(k,T^iV^{\bullet})=h^i(G,V^{\bullet}).
$$
The most difficult part of \cite{Sch15} is the proof that $k$ is a generator of $D(G)$. We find it worthwhile to paraphrase Schneider's elegant proof below; our rendition will hopefully be helpful.

\begin{prop}\label{gen}\cite[Prop.~5]{Sch15}
The trivial representation $k$ is a generator of $D(G)$. That is
$$
\Hom_{D(G)}(k,T^iV^{\bullet})=0 \y \forall i \in \Z  \Longrightarrow V^{\bullet}=0.
$$
\end{prop}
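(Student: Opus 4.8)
The plan is to show that the derived invariants functor is conservative. By the computation displayed just before the proposition, $\Hom_{D(G)}(k,T^iV^\bullet)=h^i(G,V^\bullet)$, so the hypothesis says precisely that $R\Gamma(V^\bullet)=0$ in $D(\text{Vec}_k)$, and the task is to deduce $V^\bullet=0$. The one positive input I would isolate first is a Nakayama-type fact: every nonzero object $M$ of $\Rep_k^{\infty}(G)$ has $M^G\neq 0$. Indeed $M$ is the union of its finite-dimensional $G$-stable subspaces, and a nonzero finite-dimensional smooth representation of a pro-$p$ group over a field of characteristic $p$ contains a copy of the trivial representation, since the trivial module is the only simple $k[G/N]$-module for $N$ open normal. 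Concretely, if $0\neq v\in M$ is fixed by such an $N$, then $k[G/N]\cdot v\subseteq M^N$ is a nonzero finite-dimensional $k[G/N]$-module, hence has nonzero $G/N$-invariants, which lie in $M^G$.

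The clean core is the bounded-below case. Suppose $V^\bullet$ is bounded below and not acyclic, and let $q_0$ be the least degree with $M:=h^{q_0}(V^\bullet)\neq 0$. Because $\text{cd}_p(G)=d<\infty$, the hypercohomology spectral sequence $E_2^{p,q}=H^p(G,h^q(V^\bullet))\Rightarrow h^{p+q}(G,V^\bullet)$ lives in the strip $0\leq p\leq d$ and converges. In total degree $q_0$ the only possibly nonzero entry is $E_2^{0,q_0}=M^G$, since every other term needs $h^{<q_0}\neq 0$; moreover no differential touches this corner, as incoming differentials originate in negative columns and outgoing ones land in $H^{\geq 2}(G,h^{<q_0})=0$. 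Hence $h^{q_0}(G,V^\bullet)=M^G$, which is nonzero by the first paragraph, contradicting $R\Gamma(V^\bullet)=0$. The same bottom corner survives for any bounded-below complex, so $R\Gamma$ already detects zero on $D^{+}(G)$.

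The main obstacle is the passage to complexes that are unbounded below. For $d\geq 2$ the differentials $d_r\colon (h^qV^\bullet)^G\to H^{\geq 2}(G,h^{<q}V^\bullet)$ can annihilate the bottom corner, and naive truncation or (co)limit arguments are circular because $R\Gamma=R\Hom_{D(G)}(k,-)$ commutes with exactly the homotopy limits and colimits that rebuild $V^\bullet$ from its truncations. To circumvent this I would transport the problem to the Iwasawa algebra via Pontryagin duality (Corollary \ref{pd}): writing $V^\bullet=(N^\bullet)^{\vee}$ with $N^\bullet\in D(\Omega)$, the hypothesis becomes $R\Hom_{\Omega}(N^\bullet,k)=0$ and it suffices to prove $N^\bullet=0$. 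Here $\Omega$ is a complete (pseudocompact) Noetherian local ring of finite global dimension $d$, and I would induct on $d$ using a regular element. Choosing a nonzerodivisor $x\in\m$ with $\Omega/x\Omega$ regular of dimension $d-1$ and residue field $k$, base-change adjunction gives $R\Hom_{\Omega}(N^\bullet,k)\cong R\Hom_{\Omega/x}(\Omega/x\otimes_{\Omega}^{\mathbf{L}}N^\bullet,\,k)$; the inductive hypothesis forces $\Omega/x\otimes_{\Omega}^{\mathbf{L}}N^\bullet=0$, i.e. $x\colon N^\bullet\to N^\bullet$ is an isomorphism in $D(\Omega)$. But then $x$ acts invertibly on each pseudocompact module $h^j(N^\bullet)$, whence $h^j(N^\bullet)=\bigcap_n x^n h^j(N^\bullet)\subseteq\bigcap_n\m^n h^j(N^\bullet)=0$ by completeness, so $N^\bullet=0$. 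The base cases $d\leq 1$ are immediate, since the two-row spectral sequence has no differentials: $R\Hom_{\Omega}(N^\bullet,k)=0$ forces $\Hom_{\Omega}(h^j N^\bullet,k)=0$ for all $j$, hence each $h^jN^\bullet=0$ by Nakayama. Crucially this whole dévissage is insensitive to boundedness of $N^\bullet$, which is exactly what the direct spectral-sequence approach failed to achieve.

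The delicate point I expect to fight is the existence of the regular element $x$ in the noncommutative generality of Theorem \ref{one}: when $G$ is non-abelian, $\Omega=k[\![G]\!]$ need not contain a central nonzerodivisor cutting the dimension, and one must instead produce a normalizing sequence adapted to the Auslander-regular structure, equivalently lift a regular sequence from $\gr\Omega=S(\g)$. For uniform and especially abelian $G$ — the setting of Theorem \ref{two}, where $\Omega\cong k[\![t_1,\dots,t_d]\!]$ — this is transparent and the induction is entirely elementary; it is only the general case that requires the extra filtered-ring bookkeeping. I would therefore present the regular-element dévissage as the main engine, proving the proposition outright in the commutative case and reducing the general case to this normalizing-sequence input.
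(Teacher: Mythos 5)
Your bounded-below spectral-sequence argument and your isolation of the real difficulty (complexes unbounded below) are both correct, and the commutative dévissage you propose for $G\simeq\Z_p^d$ does work: the adjunction $R\Hom_{\Omega}(N^{\bullet},k)\cong R\Hom_{\Omega/x}(\Omega/x\otimes^{\mathbf{L}}_{\Omega}N^{\bullet},k)$, the identification of $\Omega/x\otimes^{\mathbf{L}}_{\Omega}N^{\bullet}$ with the cone of $x$, and the separatedness argument $\bigcap_n\m^nM=0$ for pseudocompact $M$ are all sound (modulo routine care with $K$-projective resolutions in the pseudocompact category). This is a genuinely different, and in the abelian case arguably cleaner, route than the paper's. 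The gap is that the proposition is stated, and is needed for Theorem \ref{one}, for an \emph{arbitrary} torsionfree pro-$p$ $p$-adic Lie group, and the input you defer to the end -- a normal regular element $x\in\Omega$ with $\Omega/x\Omega$ again of the right shape -- is not filtered-ring bookkeeping but a genuine obstruction. Normality ($x\Omega=\Omega x$) cannot be obtained by lifting a regular sequence from $\gr\Omega=S(\frak{g})$: for $G$ uniform with $\frak{g}\otimes\Q_p$ split simple (e.g.\ a congruence kernel in $\SL_n(\Z_p)$), results of Ardakov and his collaborators show that $k[\![G]\!]$ has centre $k$ and no non-unit normal elements, so your induction has no first step. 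As written, the engine cannot be started in the generality of the statement.

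The paper's proof (Schneider's) sidesteps this entirely by dévissage along the augmentation filtration of $\Omega$ as a \emph{left} module rather than along quotient rings: since $g-1\in\m$ for every $g\in G$, the group acts trivially on $\m^j/\m^{j+1}\simeq k^{\oplus n_j}$, so each finite-dimensional smooth representation $\Omega/\m^j$ is an iterated self-extension of the trivial representation inside $\Rep_k^{\infty}(G)$. The long exact sequences attached to $0\to\m^j/\m^{j+1}\to\Omega/\m^{j+1}\to\Omega/\m^j\to 0$ then give $\Ext_G^i(\Omega/\m^j,V^{\bullet})=0$ for all $i$ and $j$ by induction on $j$ from the hypothesis (finiteness of $\text{cd}_p(G)$ is what makes the functors $R\Gamma_j=R\Hom_G(\Omega/\m^j,-)$ defined on unbounded complexes), and smoothness yields $V^{\bullet}=\varinjlim_j\Hom_G(\Omega/\m^j,V^{\bullet})$, an exhaustion of $V^{\bullet}$ by exact complexes. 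This uses only the two-sided ideals $\m^j$, which always exist, and no normal elements -- exactly the piece your approach is missing. You should either adopt that dévissage for the general case or restrict your claim explicitly to the abelian setting of Theorem \ref{two}.
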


\begin{proof}
Our assumption that $h^i(G,V^{\bullet})=0$ for all $i \in \Z$ means $R\Gamma(V^{\bullet})$ is exact. We have to conclude $V^{\bullet}$ is exact.
We will introduce a sequence of functors $\Gamma_j$ starting from $\Gamma_1=\Gamma$ which all admit total derived functors
$R\Gamma_j$ defined on $D(G)$. By induction we will show that $R\Gamma_j(V^{\bullet})$ is exact for all $j \in \N$. If the terms of $V^{\bullet}$ are $\Gamma_j$-acyclic
this means $\Gamma_j(V^{\bullet})$ is exact and we would be able to conclude $V^{\bullet}=\varinjlim_{j \in \N}\Gamma_j(V^{\bullet})$ is exact.

The forthcoming definition of the functors $\Gamma_j$ is based on the following observation. We first note that
$\Omega/I_N\simeq \ind_N^G(1)$ is an object of $\Rep_k^{\infty}(G)$ and $\Hom_G(\Omega/I_N,V)=V^N$ for any smooth representation $V$ by Frobenius reciprocity. Now since 
$\m^j$ is open it contains $I_N$ for small enough $N$ and therefore $\Omega/\m^j$ is also an object of $\Rep_k^{\infty}(G)$. We define the functors
$\Gamma_j: \Rep_k^{\infty}(G) \rightarrow \text{Vec}_k$ by 
$$
\Gamma_j(V)=\Hom_G(\Omega/\m^j,V).
$$
Note that $\Gamma_1=\Gamma$. We may identify $\Gamma_j(V)$ with a subspace of $V^N$ when $\m^j \supset I_N$. Since the powers $\m^j$ form a fundamental system of open neighborhoods at zero in $\Omega$, smoothness of $V$ translates into 
$$
V=\bigcup_{j \in \N} \Gamma_j(V).
$$ 
In order to show $R\Gamma_j$ is defined on all of $D(G)$ it suffices (again by \cite[Cor.~5.3($\gamma$)]{Har66}) to show that 
\begin{equation}\label{cd}
R^i\Gamma_j(V)=\Ext_G^i(\Omega/\m^j,V)=0 \y \y \forall i>d.
\end{equation}
Since $\Omega$ is Noetherian $\m^j/\m^{j+1}\simeq k^{\oplus n_j}$ for some $n_j\in \N$ (with $G$ acting trivially on both sides of the isomorphism). The short exact sequence
of smooth $G$-representations
$$
0 \longrightarrow \m^j/\m^{j+1} \longrightarrow \Omega/\m^{j+1} \longrightarrow \Omega/\m^j \longrightarrow 0
$$
gives rise to a long exact sequence of $k$-vector spaces
$$
\cdots \longrightarrow \Ext_G^i(\Omega/\m^j,V) \longrightarrow \Ext_G^i(\Omega/\m^{j+1},V)\longrightarrow \Ext_G^i(k,V)^{\oplus n_j} \longrightarrow \Ext_G^{i+1}(\Omega/\m^j,V)\longrightarrow \cdots
$$
from which (\ref{cd}) follows immediately by induction on $j$. The long exact sequence also shows that if $V$ is $\Gamma$-acyclic then $V$ is $\Gamma_j$-acyclic for all $j \in \N$. 

We proceed to show $R\Gamma_j(V^{\bullet})$ is exact (the case $j=1$ being our assumption on $V^{\bullet}$). The analogous long exact sequence for hyperext functors is
$$
\cdots \longrightarrow \Ext_G^i(\Omega/\m^j,V^{\bullet}) \longrightarrow \Ext_G^i(\Omega/\m^{j+1},V^{\bullet})\longrightarrow \Ext_G^i(k,V^{\bullet})^{\oplus n_j} \longrightarrow \Ext_G^{i+1}(\Omega/\m^j,V^{\bullet})\longrightarrow \cdots
$$
see \cite[Prop.~6.1]{Har66}. By induction all terms here vanish. A calculation similar to (\ref{comp}) shows that 
$$
\Ext_G^i(\Omega/\m^j,V^{\bullet}):=\Hom_{D(G)}(\Omega/\m^j,T^iV^{\bullet})=h^i(R\Gamma_j(V^{\bullet})).
$$
We may assume all terms of $V^{\bullet}$ are $\Gamma$-acyclic, and therefore $\Gamma_j$-acyclic as observed. As a result $\Gamma_j(V^{\bullet})$ is exact. Their colimit over $j$ 
equals $V^{\bullet}$ which is therefore exact.
\end{proof}

Using results of Neeman, Schneider deduces that $k$ generates $D(G)$ in the sense that any strict full triangulated subcategory $\mathcal{D}\subset D(G)$, closed under direct sums, which contains $k$ must be all of $D(G)$, cf. \cite[Prop.~6]{Sch15}. 

By contrast $k$ does not generate $\Rep_k^{\infty}(G)$ as an abelian category. It generates the subcategory of representations $V$ which are generated by $V^G$. Note that $V^G\neq 0$ for nonzero $V$ (pick a nonzero $v \in V$ and decompose the finite set $\F_p[G]v$ into $G$-orbits).

Combining the previous three results (Lemmas \ref{alg} and \ref{cpt}, and Proposition \ref{gen}) Keller's theorem \ref{kel} implies the main result of \cite{Sch15} in our setup ($G=I$):

\begin{thm}\label{main}\cite[Thm.~9]{Sch15}
There is an equivalence of triangulated categories
$$
D(G) \overset{\sim}{\longrightarrow} D(\mathcal{H}^{\bullet}).
$$
\end{thm}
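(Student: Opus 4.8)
The plan is to apply Keller's Morita theorem (Theorem \ref{kel}) to the triangulated category $\mathcal{D} = D(G)$ with the trivial representation $k[0]$ as compact generator. Virtually all the substantive work has already been carried out in the preceding results, so the proof reduces to checking that the hypotheses of Theorem \ref{kel} are satisfied and then matching the abstract DGA it produces with the explicit endomorphism DGA $\mathcal{H}^{\bullet}$.

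First I would verify the hypotheses. The category $D(G)$ is algebraic by Lemma \ref{alg}, and it admits arbitrary coproducts, computed termwise as direct sums of complexes in $\Rep_k^{\infty}(G)$. Lemma \ref{cpt} shows that $k[0]$ is a compact object, and Proposition \ref{gen} shows that it is a generator; together these say that $k[0]$ is a compact generator of the algebraic triangulated category $D(G)$. This is exactly the input Theorem \ref{kel} demands.

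Applying Theorem \ref{kel} then yields a differential graded algebra $\HH_k^{\bullet} = R\Hom_{D(G)}(k,k)^{\op}$ together with an equivalence $D(G) \overset{\sim}{\longrightarrow} D(\HH_k^{\bullet})$. It remains to identify $\HH_k^{\bullet}$ with $\mathcal{H}^{\bullet}$. By the discussion following Theorem \ref{kel}, when $\mathcal{D} = D(\mathcal{A})$ for an abelian category $\mathcal{A}$ with enough injectives and the generator is an object $C$ equipped with an injective resolution $C[0] \to I^{\bullet}$, the DGA furnished by Keller's theorem is modeled by $\End_{\mathcal{A}}^{\bullet}(I^{\bullet})^{\op}$. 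In our situation $\mathcal{A} = \Rep_k^{\infty}(G)$ and $C = k$, and this is precisely the definition of $\mathcal{H}^{\bullet}$. Since $I^{\bullet}$ is bounded below and consists of injectives it is $K$-injective, so $\End_G^{\bullet}(I^{\bullet})$ correctly computes $R\Hom_{D(G)}(k,k)$, with $h^i\big(\End_G^{\bullet}(I^{\bullet})\big) = \Hom_{K(G)}(I^{\bullet}, I^{\bullet+i}) = \Ext_G^i(k,k)$ as in Remark \ref{mor}. Substituting into Keller's equivalence gives $D(G) \overset{\sim}{\longrightarrow} D(\mathcal{H}^{\bullet})$, as desired.

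I do not expect any serious obstacle at this stage: the genuinely hard part, namely showing that $k$ generates $D(G)$, is already behind us in Proposition \ref{gen}. The only point requiring a little care is the passage from the abstract derived mapping object $R\Hom_{D(G)}(k,k)$ to the concrete endomorphism DGA $\mathcal{H}^{\bullet}$, which hinges on resolving $k$ by a $K$-injective complex so that the morphism complex correctly computes the derived mapping object, exactly the translation explained after Theorem \ref{kel}.
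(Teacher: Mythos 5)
Your proposal is correct and follows exactly the route the paper takes: Theorem \ref{main} is obtained by feeding Lemma \ref{alg}, Lemma \ref{cpt}, and Proposition \ref{gen} (together with the existence of coproducts in $D(G)$) into Keller's theorem \ref{kel}, and then identifying the resulting DGA with $\mathcal{H}^{\bullet}=\End_G^{\bullet}(I^{\bullet})^{\op}$ via the discussion following Theorem \ref{kel}. No discrepancies to report.
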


Here $\mathcal{H}^{\bullet}=\End_G^{\bullet}(I^{\bullet})^{\op}$ for a choice of injective resolution $k[0]\rightarrow I^{\bullet}$ (which one can obviously take to be non-negatively graded)
and the equivalence is given by the composition
$$
H: D(G) \overset{{\bf{i}}}{\longrightarrow} K_{\text{inj}}(G) \longrightarrow K(\mathcal{H}^{\bullet}) \longrightarrow D(\mathcal{H}^{\bullet}) \y \y \y \y V^{\bullet} \rightsquigarrow \Hom_G^{\bullet}(I^{\bullet},{\bf{i}}(V^{\bullet})).
$$
A quasi-inverse can be defined in a similar fashion. Following \cite[Def.~10.12.2.1]{BL94} we call a DG module $M^{\bullet}$ $K$-projective
if the natural map $\Hom_{K(\mathcal{H}^{\bullet})}(M^{\bullet},N^{\bullet}) \longrightarrow \Hom_{D(\mathcal{H}^{\bullet})}(M^{\bullet},N^{\bullet})$ is an isomorphism for all other DG modules $N^{\bullet}$. By \cite[Cor.~10.12.2.9]{BL94} localization restricts to an equivalence of triangulated categories $K_{\text{pro}}(\mathcal{H}^{\bullet}) \longrightarrow D(\mathcal{H}^{\bullet})$. We fix a quasi-inverse ${\bf{p}}$ once and for all and let $T$ be the composition
$$
T: D(\mathcal{H}^{\bullet}) \overset{{\bf{p}}}{\longrightarrow} K_{\text{pro}}(\mathcal{H}^{\bullet}) \longrightarrow K(G) \longrightarrow D(G) \y \y \y \y
M^{\bullet} \rightsquigarrow I^{\bullet}\otimes_{\mathcal{H}^{\bullet}}{\bf{p}}(M^{\bullet}).
$$
Here we view $I^{\bullet}$ as the right DG $\mathcal{H}^{\bullet}$-module $\bigoplus_{i\geq 0}I^i$. The tensor product is defined in \cite[10.9]{BL94}. This functor $T$ is both a left adjoint to $H$ and a quasi-inverse of $H$. 

As noted after theorem \ref{kel} the cohomology algebra is $h^*(\mathcal{H}^{\bullet})=\Ext_G^*(k,k)^{\op}$. Moreover, 
$$
h^*\big(H(V^{\bullet})\big)=\bigoplus_{i \in \Z} \Ext_G^i(k,V^{\bullet})
$$
for any complex of smooth $G$-representations $V^{\bullet}$.

\section{The Yoneda algebra of $\Omega$}\label{yon}

We defined the DGA $\mathcal{H}^{\bullet}=\End_G^{\bullet}(I^{\bullet})^{\op}$ by choosing an injective resolution of the trivial representation;
say $k[0] \rightarrow I^{\bullet}$. By Pontryagin duality this corresponds to choosing a projective resolution $P^{\bullet}\rightarrow k[0]$ in the category $\Mod(\Omega)$ of pseudocompact modules. Here we view $k$ as a module over $\Omega$ via the augmentation map $\Omega \overset{a}{\longrightarrow} k$ and $P^{\bullet}$ is indexed non-positively:
$$
\cdots \longrightarrow P^{-2}\longrightarrow P^{-1}\longrightarrow P^0 \longrightarrow k \longrightarrow 0.
$$
To go between the two points of view use the relation $P^{-i}=(I^i)^{\vee}$ for $i\geq 0$. 

\begin{lem}\label{gomega}
We have the following isomorphisms.
\begin{itemize}
\item[(1)] $\mathcal{H}^{\bullet}=\End_G^{\bullet}(I^{\bullet})^{\op} \overset{\sim}{\longrightarrow} \End_{\Omega}^{\bullet}(P^{\bullet})$;
\item[(2)] $h^*(\mathcal{H}^{\bullet})=\Ext_G^*(k,k)^{\op}  \overset{\sim}{\longrightarrow} \Ext_{\Omega}^*(k,k)$.
\end{itemize}
\end{lem}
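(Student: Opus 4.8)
The plan is to deduce both statements directly from Pontryagin duality (Theorem \ref{pont}), which provides an exact \emph{contravariant} equivalence $(\cdot)^{\vee}$ between $\Rep_k^{\infty}(G)$ and $\Mod(\Omega)$. The basic input is that, being an equivalence, duality induces isomorphisms on all morphism groups
$$
\Hom_G(V,W) \overset{\sim}{\longrightarrow} \Hom_{\Omega}(W^{\vee},V^{\vee}), \qquad f \mapsto f^{\vee},
$$
and that these are contravariant, $(g\circ f)^{\vee}=f^{\vee}\circ g^{\vee}$. Since duality is exact and carries injectives to projectives, applying it termwise to the injective resolution $k[0]\to I^{\bullet}$ produces exactly the projective resolution $P^{\bullet}\to k[0]$ via the identification $P^{-i}=(I^i)^{\vee}$ recorded before the lemma; here one fixes once and for all the sign convention for the differential of the dual complex so that $P^{\bullet}$ is an honest resolution.

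For part (1) I would define the comparison map $\End_G^{\bullet}(I^{\bullet})^{\op}\to \End_{\Omega}^{\bullet}(P^{\bullet})$ by dualizing componentwise. An element $a=(a_q)_q\in \End_G^i(I^{\bullet})=\prod_q\Hom_G(I^q,I^{q+i})$ is sent to the family $(a_q^{\vee})_q$, where $a_q^{\vee}\colon (I^{q+i})^{\vee}\to (I^q)^{\vee}$, that is $a_q^{\vee}\colon P^{-q-i}\to P^{-q}$; after the reindexing $r=-q-i$ this is a map $P^r\to P^{r+i}$, so the family lands in $\End_{\Omega}^i(P^{\bullet})$. Because duality is fully faithful, this assignment is a bijection in each degree. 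The two remaining checks are that it commutes with the differentials and that it reverses multiplication (which is precisely why the $^{\op}$ appears on the source, a contravariant functor being an anti-homomorphism on endomorphism rings). Both are read off from the formulas in Remark \ref{mor}: contravariance turns $a_{q+j}\circ b_q$ into $b_q^{\vee}\circ a_{q+j}^{\vee}$, reversing the order of composition, while the Koszul signs $(-1)^{ij}$ in the product and $-(-1)^i$ in the differential are symmetric enough to survive the reindexing $r=-q-i$. I expect this sign bookkeeping to be the one genuinely computational point, and hence the main (if routine) obstacle.

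Part (2) then follows formally by passing to cohomology. Applying $h^*$ to the DGA isomorphism of (1) gives an isomorphism of graded algebras $h^*(\mathcal{H}^{\bullet})\cong h^*(\End_{\Omega}^{\bullet}(P^{\bullet}))$, and it remains to identify each side with a Yoneda algebra. On the right, $P^{\bullet}$ is a bounded-above complex of projectives, hence $K$-projective, so Remark \ref{mor} gives $h^i(\End_{\Omega}^{\bullet}(P^{\bullet}))=\Hom_{K(\Omega)}(P^{\bullet},P^{\bullet+i})=\Hom_{D(\Omega)}(k,T^ik)=\Ext_{\Omega}^i(k,k)$. On the left, the analogous computation with the $K$-injective complex $I^{\bullet}$ yields $h^i(\End_G^{\bullet}(I^{\bullet}))=\Ext_G^i(k,k)$, so that $h^*(\mathcal{H}^{\bullet})=\Ext_G^*(k,k)^{\op}$ as already noted after Theorem \ref{main}, matching the $^{\op}$ in the statement. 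Thus (2) is nothing more than the cohomology of (1), and no further work is needed.
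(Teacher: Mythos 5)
Your proposal is correct and follows essentially the same route as the paper: dualize each component $a_q$ via Pontryagin duality, reindex by $t=-q-i$ to land in $\End_{\Omega}^{i}(P^{\bullet})$, observe that the anti-equivalence reverses composition (accounting for the $^{\op}$), and obtain (2) by passing to cohomology. Your additional remarks on the sign bookkeeping and the identification of $h^i$ with the Ext groups only make explicit what the paper leaves implicit.
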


\begin{proof}
Consider a homogeneous element $a=(a_q)_{q\in \Z}$ in $\End_G^{i}(I^{\bullet})$. Here $a_q: I^q\rightarrow I^{q+i}$ has Pontryagin dual
$a_q^{\vee}: P^{-q-i}\rightarrow P^{-q}$. Introducing $t=-q-i$ this is a map $b_t: P^t \rightarrow P^{t+i}$. Packaged together they give an element $b=(b_t)_{t\in \Z}$ in
$\End_{\Omega}^{i}(P^{\bullet})$. To summarize $b_t=a_{-t-i}^{\vee}$. Since $(\cdot)^{\vee}$ is an anti-equivalence composition on $\End_{\Omega}^{\bullet}(P^{\bullet})$ corresponds to opposite composition on $\End_G^{\bullet}(I^{\bullet})$. This proves (1) which then implies (2) by taking cohomology.
\end{proof}

There is a canonical choice of $P^{\bullet}$ which is the {\it{bar}} resolution of $k$, cf. \cite[Ch.~V(1.2.8)]{Laz65}. In this resolution the $P^{-i}$ are $(i+1)$-fold completed tensor products (thought of as $\Omega$-modules via the leftmost factor)
$$
P^{-i}=\Omega^{\hat{\otimes}(i+1)}=\Omega \hat{\otimes}_k \cdots \hat{\otimes}_k \Omega=\Omega(G \times \cdots\times G)
$$
and the differentials are obtained from the case of $k[G/N]$ by passing to the limit, cf. \cite[Ch.~V(1.2.1.2)]{Laz65} for the explicit formula. The first differential
$P^0=\Omega\rightarrow k$ is the augmentation map $a$. Dually the $I^i$ are direct sums of (an enormous number of) copies of the regular representation $\mathcal{C}^{\infty}(G,k)$.

In conjunction Schneider's theorem \ref{main} and Pontryagin duality (corollary \ref{pd}) give an equivalence 
$$
D(\Omega)^{\op} \overset{\sim}{\longrightarrow} D(\End_{\Omega}^{\bullet}(P^{\bullet}))
$$
which takes a $K$-projective complex $X^{\bullet}$ of pseudocompact $\Omega$-modules to the left DG $\End_{\Omega}^{\bullet}(P^{\bullet})$-module
$\Hom_{\Omega}^{\bullet}(X^{\bullet},P^{\bullet})$. After passing to cohomology this yields a left module $\bigoplus_{i\in \Z}\Ext_{\Omega}^i(X^{\bullet},k)$ over the Yoneda algebra
$$
E(\Omega):=\Ext_{\Omega}^*(k,k).
$$
One of our goals in this paper is to endow $E(\Omega)$ with an $A_{\infty}$-structure and give an equivalence between $D(\Omega)^{\op}$ and the derived category of
$A_{\infty}$-modules $D_{\infty}(E(\Omega))$ akin to Koszul duality. We pause before doing so and discuss the case of equi-$p$-valued groups $G$ where $E(\Omega)$ has a nice explicit description due to Lazard. 

\section{Review of Lazard's theory of $p$-valued groups}\label{valu}

\subsection{Filtrations on $p$-valued groups}

Suppose our pro-$p$ group $G$ is equipped with a $p$-valuation $\omega$ defining the topology. This means the following. First recall that 
$\omega: G\backslash \{1\}\longrightarrow (\frac{1}{p-1},\infty)$ is a function satisfying three axioms, one of which is that
$\omega(g^p)=\omega(g)+1$, cf. \cite[p.~169]{Sch11}. In particular $G$ cannot have elements of order $p$. The other conditions on $\omega$ guarantee that 
$$
G_v=\{g\in G: \omega(g)\geq v\} \y \y \y \y \y G_{v+}=\{g\in G: \omega(g)>v\}
$$
are normal subgroups of $G$. When we say $\omega$ defines the topology on $G$ we mean that each $G_v$ is open and $G=\varprojlim G/G_v$. We associate an abelian group 
$\gr(G)$ with the $p$-valued group $(G,\omega)$ as follows.
$$
\gr(G)=\bigoplus_{v>0}\gr_v(G)=\bigoplus_{v>0} G_v/G_{v+}.
$$
It turns out $\gr(G)$ has a lot more structure. First of all it is an $\F_p$-vector space since $g^pG_{v+}=1G_{v+}$. Moreover, taking commutators define a Lie bracket 
$[-,-]$ on $\gr(G)$ compatible with the grading. Lastly, $\gr(G)$ becomes a module over the one-variable polynomial ring $\F_p[\pi]$ by letting $\pi$ act by the degree one map
$$
\pi: \gr(G)\longrightarrow \gr(G) \y \y \y \y \y gG_{v+}\mapsto g^pG_{(v+1)+}.
$$
One can show that the Lie bracket is $\F_p[\pi]$-bilinear and $\gr(G)$ thus becomes a graded Lie algebra over $\F_p[\pi]$. We refer to \cite[Sect.~23-25]{Sch11} for more details.
If $k$ is a field of characteristic $p$ we convert $\gr(G)$ into a graded Lie algebra $\frak{g}$ over $k$ as follows. This $\frak{g}$ will play an important role below. 

\begin{defn}
$\frak{g}:=k \otimes_{\F_p[\pi]} \gr(G)=k\otimes_{\F_p} \gr(G)/\pi\gr(G)$.
\end{defn}

In general $\gr(G)$ is torsionfree over $\F_p[\pi]$. Since we are assuming $G$ is a $d$-dimensional Lie group over $\Q_p$ in fact $\gr(G)$ is free over $\F_p[\pi]$ of rank $d$.
Consequently $(G,\omega)$ admits an ordered basis; by which we mean a tuple $(g_1,\ldots,g_d)$ of elements from $G$ with the two properties below. 

\begin{itemize}
\item The map $(x_1,\ldots,x_d) \mapsto g_1^{x_1}\cdots g_d^{x_d}$ is a homeomorphism $\Z_p^d \longrightarrow G$;
\item $\omega(g)=\min_{i=1,\ldots,d} \{\omega(g_i)+v(x_i)\}$ where $g=g_1^{x_1}\cdots g_d^{x_d}$.
\end{itemize}

Correspondingly the elements $\sigma(g_i):=g_iG_{\omega(g_i)+}$ form an $\F_p[\pi]$-basis for $\gr(G)$, cf. \cite[Prop.~26.5]{Sch11}. In turn
the vectors $\xi_i:=1\otimes \sigma(g_i)$ form a $k$-basis for $\frak{g}$, and by the Poincar\'{e}-Birkhoff-Witt theorem the monomials
$\xi_1^{\alpha_1}\cdots\xi_d^{\alpha_d}$ (for varying $\alpha_i \in \Z_{\geq 0}$) form a $k$-basis for the universal enveloping algebra $U(\frak{g})$. 

\subsection{Filtrations on Iwasawa algebras}

Now assume $k$ is a {\it{finite}} field of characteristic $p$ and let $\OO=W(k)$ be its ring of Witt vectors; a complete DVR with residue field 
$\OO/p\OO=k$. We let $\Lambda=\Lambda(G)=\OO[\![G]\!]$ be the Iwasawa $\OO$-algebra of $G$. Note that $\Lambda/p\Lambda=\Omega$ (the Mittag-Leffler condition is trivially satisfied since the transition maps between the various $\OO[G/N]$ are surjective). 

As an $\OO$-module $\Lambda$ is isomorphic to $\OO[\![X_1,\ldots,X_d]\!]$. If we let ${\bf{b}}_i:=g_i-1$ and ${\bf{b}}^{\alpha}:={\bf{b}}_1^{\alpha_1}\cdots {\bf{b}}_d^{\alpha_d}$ 
for $\alpha=(\alpha_i)\in \Z_{\geq 0}^d$ an isomorphism is given by 
$$
\OO[\![X_1,\ldots,X_d]\!] \overset{\sim}{\longrightarrow} \Lambda \y \y \y \y \y \sum_{\alpha} c_{\alpha}X^{\alpha} \mapsto \sum_{\alpha} c_{\alpha}{\bf{b}}^{\alpha}
$$
where $X^{\alpha}$ is defined in the obvious way. This leads to a valuation $\tilde{\omega}: \Lambda\backslash \{0\} \longrightarrow (\frac{1}{p-1},\infty)$ on the Iwasawa $\OO$-algebra 
given by the formula
$$
\tilde{\omega}(\lambda)=\inf_{\alpha}\{v(c_{\alpha})+\sum_{i=1}^d \alpha_i \omega(g_i)\} \y \y \y \y \y \lambda=\sum_{\alpha} c_{\alpha}{\bf{b}}^{\alpha}
$$
where $v(p)=1$. Since $\omega(G \backslash \{1\})$ is a discrete subset of $(0,\infty)$ one may replace the infimum by the minimum in the definition of 
$\tilde{\omega}$. Analogous to $G_v$ and $G_{v+}$ we define $\OO$-submodules of $\Lambda$ as 
$$
\Lambda_v=\{\lambda\in \Lambda: \tilde{\omega}(\lambda)\geq v\} \y \y \y \y \y \Lambda_{v+}=\{\lambda\in \Lambda: \tilde{\omega}(\lambda)>v\}.
$$
They turn out to be two-sided ideals of $\Lambda$ which form a fundamental system of open neighborhoods of zero in $\Lambda$ (as $v>0$ varies). Recall that $\Lambda$ is a local ring since $G$ is pro-$p$, and its pseudocompact topology coincides with the $\m_{\Lambda}$-adic topology where $\m_{\Lambda}=\{\lambda: v(c_0)>0\}$. Each $\Lambda_v$ has a more explicit description (cf. \cite[p.~197]{Sch11}) which in particular shows that $\Lambda_v$ is independent of the choice of basis $(g_i)$; but dependent on $\omega$ of course. 
The same is true for $\tilde{\omega}$, cf. \cite[Cor.~28.4]{Sch11}. Moreover multiplication takes $\Lambda_v \times \Lambda_{v'}\rightarrow \Lambda_{v+v'}$, which allows us to associate a graded ring $\gr(\Lambda)$ defined as
$$
\gr(\Lambda)=\bigoplus_{v>0}\gr_v(\Lambda)=\bigoplus_{v>0} \Lambda_v/\Lambda_{v+}.
$$ 
Note that $\gr(\Lambda)$ is a graded algebra over 
$$
\gr(\OO)=\bigoplus_{i\geq 0} p^i\OO/p^{i+1}\OO.
$$
(When $k=\F_p$ we have an isomorphism $\F_p[\pi] \overset{\sim}{\longrightarrow} \gr(\Z_p)$ sending $\pi\mapsto p+p^2\Z_p$.) We refer to \cite[Sect.~ 28]{Sch11} for more details on these constructions and proofs of the facts mentioned. 

One of the main results in Lazard theory identifies $\gr(\Lambda)$ with the universal enveloping algebra $U(\gr(G))$ tensored by $\gr(\OO)$. 

\begin{thm}\label{laz}\cite[Thm.~28.3.i]{Sch11}
There is an isomorphism of graded $\gr(\OO)$-algebras
$$
\gr(\OO)\otimes_{\F_p[\pi]} U(\gr(G)) \overset{\sim}{\longrightarrow} \gr(\Lambda).
$$
\end{thm}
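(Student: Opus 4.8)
The plan is to realize the isomorphism as the canonical map induced by sending a group element to its principal symbol in $\gr(\Lambda)$, and then to prove bijectivity by exhibiting matching $k$-bases on the two sides. First I would build a map on the level of graded Lie algebras. For $g \in G_v \setminus G_{v+}$ write $\sigma(g)=gG_{v+}\in\gr_v(G)$; since $\tilde\omega(g-1)=\omega(g)$, the element $g-1\in\Lambda_v$ has a well-defined symbol $\overline{g-1}\in\gr_v(\Lambda)$, and I set $\phi(\sigma(g))=\overline{g-1}$. The identity $gh-1=(g-1)+(h-1)+(g-1)(h-1)$, whose last term lies in $\Lambda_{2v+}\subset\Lambda_{v+}$ when $g,h\in G_v$, shows $\phi$ is additive, hence a well-defined $\F_p$-linear graded map $\gr(G)\to\gr(\Lambda)$.

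Next I would verify that $\phi$ is a morphism of graded $\F_p[\pi]$-Lie algebras. For the $\pi$-action, the expansion $g^p-1=p(g-1)+\sum_{1<j<p}\binom{p}{j}(g-1)^j+(g-1)^p$ has leading term $p(g-1)$, because $\tilde\omega((g-1)^p)=p\,\omega(g)>\omega(g)+1$ (using $\omega(g)>\tfrac{1}{p-1}$) and the intermediate coefficients are divisible by $p$; thus $\phi(\pi\cdot\sigma(g))=\phi(\sigma(g^p))=\overline{p}\cdot\phi(\sigma(g))$, where $\overline p\in\gr(\OO)$ is the symbol of $p$ to which $\pi$ maps. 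For the bracket, writing the group commutator as $gh=hg\,(g,h)$ gives $(g-1)(h-1)-(h-1)(g-1)=gh-hg=hg\big((g,h)-1\big)$; since $\tilde\omega((g,h)-1)=\omega((g,h))\geq\omega(g)+\omega(h)$ while $hg-1$ only raises the valuation, the symbol of this commutator equals $\overline{(g,h)-1}=\phi([\sigma(g),\sigma(h)])$. Hence $\phi$ intertwines the bracket of $\gr(G)$ with the commutator bracket of the associative algebra $\gr(\Lambda)$.

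By the universal property of the enveloping algebra, $\phi$ extends to a graded $\F_p[\pi]$-algebra homomorphism $U(\gr(G))\to\gr(\Lambda)$, and as $\gr(\Lambda)$ is a $\gr(\OO)$-algebra this induces the graded $\gr(\OO)$-algebra map $\Phi:\gr(\OO)\otimes_{\F_p[\pi]}U(\gr(G))\to\gr(\Lambda)$ of the statement. To prove $\Phi$ is bijective I would compare bases. By PBW the ordered monomials $\sigma(g_1)^{\alpha_1}\cdots\sigma(g_d)^{\alpha_d}$ form an $\F_p[\pi]$-basis of $U(\gr(G))$, so the elements $\overline{p}^{\,j}\otimes\sigma(g)^{\alpha}$ ($j\geq 0$, $\alpha\in\Z_{\geq 0}^d$) form a $k$-basis of the source. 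Under $\Phi$ they map to the symbols $\overline{p^j\mathbf{b}^{\alpha}}$, where $\mathbf{b}^{\alpha}=\mathbf{b}_1^{\alpha_1}\cdots\mathbf{b}_d^{\alpha_d}$; the valuation formula $\tilde\omega(\mathbf{b}^{\alpha})=\sum_i\alpha_i\omega(g_i)$ ensures no degeneration, so that $\overline{\mathbf{b}_1}^{\alpha_1}\cdots\overline{\mathbf{b}_d}^{\alpha_d}=\overline{\mathbf{b}^{\alpha}}$ in $\gr(\Lambda)$. It therefore suffices to show $\{\overline{p^j\mathbf{b}^{\alpha}}\}_{j,\alpha}$ is a $k$-basis of $\gr(\Lambda)$.

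This last point is where I expect the real work to lie, and it is exactly a reformulation of the term-by-term formula $\tilde\omega(\lambda)=\min_{\alpha}\{v(c_{\alpha})+\sum_i\alpha_i\omega(g_i)\}$ for $\lambda=\sum_\alpha c_\alpha\mathbf{b}^\alpha$: that formula asserts that the valuation of $\lambda$ is the minimum of the valuations of its monomial terms, with no cancellation of leading symbols among distinct $\mathbf{b}^\alpha$. Concretely, each piece $\gr_v(\Lambda)$ is spanned by the $\overline{p^j\mathbf{b}^\alpha}$ with $j+\sum_i\alpha_i\omega(g_i)=v$, and these are $k$-linearly independent precisely because a nonzero $k$-combination of the corresponding $p^j\mathbf{b}^\alpha$ still has $\tilde\omega$ equal to $v$ rather than strictly larger. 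Granting this valuation formula (a standard but nontrivial input of Lazard theory), $\Phi$ carries a $k$-basis bijectively onto a $k$-basis and is thus an isomorphism of graded $\gr(\OO)$-algebras, completing the plan.
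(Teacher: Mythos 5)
Your proposal is correct and follows essentially the same route as the paper (and its cited source [Sch11, Thm.~28.3]): the map is built exactly as described there, by sending $gG_{v+}$ to $(g-1)+\Lambda_{v+}$, checking additivity, $\pi$-equivariance and compatibility of the bracket with commutators, extending by the universal property of $U(\gr(G))$, and then deducing bijectivity from the PBW basis together with the term-by-term valuation formula for $\tilde\omega$ on monomials $p^j{\bf b}^{\alpha}$. The paper itself only sketches the construction of the map and defers the rest to the reference, so your write-up supplies the same argument in more detail rather than a different one.
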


This isomorphism is constructed as follows. Taking $gG_{v+}\mapsto (g-1)+\Lambda_{v+}$ is a group homomorphism 
$\gr_v(G) \rightarrow \gr_v(\Lambda)$. Varying $v$ these add up to an $\F_p[\pi]$-linear map $\gr(G)\rightarrow \gr(\Lambda)$. One checks that the Lie bracket $[-,-]$ on  
$\gr(G)$ intertwines with taking commutators on $\gr(\Lambda)$ via this map, which therefore by universality extends to $U(\gr(G))$. 

Similar results hold for $\Omega$. Its elements can be uniquely expanded as $\sum_{\alpha}c_{\alpha}{\bf{b}}^{\alpha}$ now with coefficients $c_{\alpha}\in k$ of course. 
We define $\Omega_v$ to be the image of $\Lambda_v$ under the projection $\Lambda \twoheadrightarrow \Omega$. Similarly for $\Omega_{v+}$. 
$$
\gr(\Omega):=\bigoplus_{v>0}\gr_v(\Omega)=\bigoplus_{v>0} \Omega_v/\Omega_{v+}
$$
is then a graded $k$-algebra. Observe that $\gr(\OO)=k\oplus (p\OO/p^2\OO)\oplus\cdots$ has a natural augmentation map $\gr(\OO)\rightarrow k$, whose kernel we will denote 
by $\gr_+(\OO)$.

\begin{cor}\label{lazmod}
There is an isomorphism of graded $k$-algebras
$$
U(\frak{g})=k\otimes_{\F_p[\pi]} U(\gr(G)) \overset{\sim}{\longrightarrow} \gr(\Omega).
$$
\end{cor}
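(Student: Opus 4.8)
The plan is to deduce Corollary \ref{lazmod} from Theorem \ref{laz} by reducing modulo $p$, i.e.\ by base change along the augmentation $\gr(\OO)\to k$ with kernel $\gr_+(\OO)$. The key intermediate claim is an identification of the target as a quotient, namely
$$
\gr(\Omega)\cong \gr(\Lambda)/\gr_+(\OO)\gr(\Lambda)=k\otimes_{\gr(\OO)}\gr(\Lambda),
$$
after which one simply feeds this into the isomorphism of Theorem \ref{laz}.

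The essential computational input is the behaviour of $\tilde\omega$ under multiplication by $p$. Writing $\lambda=\sum_\alpha c_\alpha{\bf{b}}^\alpha$, multiplication by $p$ scales each coefficient $c_\alpha$ by $p$ and leaves the exponent vectors $\alpha$ unchanged; since $v(p)=1$, the defining formula for $\tilde\omega$ then yields the clean identity $\tilde\omega(p\lambda)=\tilde\omega(\lambda)+1$ for every $\lambda\neq 0$. Equivalently $p\Lambda\cap\Lambda_v=p\Lambda_{v-1}$, so multiplication by $p$ is a \emph{strict} filtered map (of degree $1$) in the short exact sequence $0\to\Lambda\xrightarrow{p}\Lambda\to\Omega\to 0$, where $\Omega$ carries the quotient filtration $\Omega_v=\mathrm{im}(\Lambda_v)$ that defines $\gr(\Omega)$.

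Passing to associated gradeds, strictness yields a surjection $\gr(\Lambda)\twoheadrightarrow\gr(\Omega)$ whose kernel is exactly $\pi\gr(\Lambda)$, where $\pi=\ord(p)\in\gr_1(\OO)\subset\gr_1(\Lambda)$ is the symbol of $p$. Indeed, the symbol of $\lambda\in\Lambda_v$ dies in $\gr_v(\Omega)$ precisely when $\lambda\in\Lambda_{v+}+p\Lambda$; writing $\lambda=\lambda'+p\mu$ with $\lambda'\in\Lambda_{v+}$, the identity above forces $\mu\in\Lambda_{v-1}$, so that the symbol of $\lambda$ equals $\pi$ times that of $\mu$, and conversely every element of $\pi\gr(\Lambda)$ clearly maps to $0$. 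Hence $\gr(\Omega)\cong\gr(\Lambda)/\pi\gr(\Lambda)$ as graded $k$-algebras. Now $\gr(\OO)\cong k[\pi]$ is free over $\F_p[\pi]$ with $\gr_+(\OO)=\pi\gr(\OO)$ and $\gr(\OO)/\pi\gr(\OO)=k$, so tensoring the isomorphism $\gr(\OO)\otimes_{\F_p[\pi]}U(\gr(G))\xrightarrow{\sim}\gr(\Lambda)$ of Theorem \ref{laz} with $k$ over $\gr(\OO)$ gives $\gr(\Lambda)/\pi\gr(\Lambda)\cong k\otimes_{\F_p[\pi]}U(\gr(G))$. The right-hand side is $U(\frak{g})$, by the definition of $\frak{g}$ together with the fact that forming the enveloping algebra commutes with base change; chaining the isomorphisms produces the asserted $U(\frak{g})\xrightarrow{\sim}\gr(\Omega)$.

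The main obstacle lies in the strictness established in the second paragraph: one must check that the Witt-vector valuation $v$ and the group-theoretic part of $\tilde\omega$ do not interfere, so that $\tilde\omega(p\lambda)=\tilde\omega(\lambda)+1$ holds with no exceptional terms, and that the quotient filtration defining $\gr(\Omega)$ is the one for which multiplication by $\pi$ has cokernel exactly $\gr(\Omega)$. Injectivity of multiplication by $\pi$ (needed only if one wants the full short exact sequence) is automatic, since $\gr(\Lambda)$ is free over $\F_p[\pi]$ by Theorem \ref{laz}.
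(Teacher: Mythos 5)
Your proposal is correct and follows essentially the same route as the paper: both reduce the statement to the identification $k\otimes_{\gr(\OO)}\gr(\Lambda)\overset{\sim}{\longrightarrow}\gr(\Omega)$ via the induced surjection $\bar{\gamma}\colon\gr(\Lambda)\twoheadrightarrow\gr(\Omega)$, with the injectivity/kernel computation resting on the same key fact $\tilde{\omega}(p\lambda)=\tilde{\omega}(\lambda)+1$ (which the paper uses implicitly when it infers $\eta\in\Lambda_{v-1}$ from $\lambda=p\eta+\lambda'$). Your phrasing in terms of strictness of multiplication by $p$ and the identification of the kernel as exactly $\pi\gr(\Lambda)=\gr_+(\OO)\gr(\Lambda)$ is just a slightly more systematic packaging of the same argument.
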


\begin{proof}
This follows immediately from theorem \ref{laz} once we check that $k \otimes_{\gr(\OO)}\gr(\Lambda)\overset{\sim}{\longrightarrow} \gr(\Omega)$. The projection
$\gamma: \Lambda\twoheadrightarrow \Omega$ has kernel $p\Lambda$ and induces surjections $\Lambda_v \twoheadrightarrow \Omega_v$ (and similarly for the $+$ versions). 
We get an induced map $\bar{\gamma}: \gr(\Lambda)\twoheadrightarrow \gr(\Omega)$ which is clearly trivial on $\gr_+(\OO)\cdot \gr(\Lambda)$. Indeed $\forall i>0$
$$
\bar{\gamma}\big( (p^ic+p^{i+1}\OO)(\lambda+\Lambda_{v+})\big)=\bar{\gamma}\big(p^ic\lambda +\Lambda_{(i+v)+}\big)=\gamma(p^ic\lambda)+\Omega_{(i+v)+}=
0.
$$
One can easily reverse this argument and show the factored map $k \otimes_{\gr(\OO)}\gr(\Lambda)\twoheadrightarrow \gr(\Omega)$ is injective. For instance, in the homogeneous case, if $\bar{\gamma}(\lambda+\Lambda_{v+})=0$ then $\gamma(\lambda)\in \Omega_{v+}$ which means $\gamma(\lambda)=\gamma(\lambda')$ for some
$\lambda'\in \Lambda_{v+}$. Since $\ker(\gamma)=p\Lambda$ we infer that $\lambda=p\eta +\lambda'$ for some $\eta \in \Lambda_{v-1}$. 
\end{proof}

Unwinding the definitions shows the isomorphism in corollary \ref{lazmod} sends $\xi_i \mapsto {\bf{b}}_i+\Omega_{\omega(g_i)+}$. More generally it takes
$$
\xi^{\alpha} \mapsto {\bf{b}}^{\alpha} + \Omega_{(\sum_{i=1}^d \alpha_i \omega(g_i))+}
$$
where $\xi^{\alpha}:=\xi_1^{\alpha_1}\cdots \xi_d^{\alpha_d}$ is the PBW basis for $U(\frak{g})$ introduced earlier. The grading $U(\frak{g})=\bigoplus_{v>0}\gr_v U(\frak{g})$ implicit in \ref{lazmod} is defined by taking $\gr_v U(\frak{g})$ to be the $k$-span of all $\xi^{\alpha}$ for which $\sum_{i=1}^d \alpha_i \omega(g_i)=v$.

We will mostly be interested in the case where $\frak{g}$ is an abelian Lie algebra. If so $U(\frak{g})$ is the symmetric algebra $S(\frak{g})$, and $\gr(\Omega)$ is a polynomial 
algebra $k[X_1,\ldots,X_d]$ with grading determined by decreeing that $\deg(X_i)=\omega(g_i)$ for all $i$. If these are all one this is of course the usual polynomial degree.

\section{Mod $p$ cohomology of equi-$p$-valued groups}

We say $(G,\omega)$ is equi-$p$-valued if it admits an ordered basis $(g_i)$ all of whose elements have the same valuation
$\omega(g_i)=t$. We will often assume $(G,\omega)$ is saturated (meaning all $g \in G$ with $\omega(g)>\frac{p}{p-1}$ are $p$-powers). This happens precisely when $t \leq 
\frac{p}{p-1}$, cf. \cite[Prop.~26.11]{Sch11}. In this case, when $p>2$ one can replace $\omega$ with $\omega'=\omega+1-t$ and arrange for $\omega'(g_i)=1$ to hold for all $i$. In other words if $p>2$ we may often assume that $t=1$. (We need $p>2$ for $\frac{1}{p-1}<1$ to hold.). See Weigel's remarks in \cite[App.~A]{HKN11} for more details on the switch from $\omega$ to $\omega'$.

When $(G,\omega)$ is equi-$p$-valued the $\F_p[\pi]$-basis elements $\sigma(g_i)=g_i G_{t+}$ all lie in $\gr_t(G)$. In particular $\gr_t(G)$ generates $\gr(G)$
as an $\F_p[\pi]$-module. Consequently $\frak{g}=\oplus_{v>0}\frak{g}_v=\frak{g}_t$ is concentrated in degree $t$ and is therefore an {\it{abelian}} Lie algebra; $[\frak{g}_t,\frak{g}_t]\subset \frak{g}_{2t}=0$. In particular $\gr(\Omega)\simeq S(\frak{g})$ is a symmetric algebra. Having fixed the basis $(\xi_i)$ we may identify $S(\frak{g})\simeq k[X_1,\ldots,X_d]$
but with each $X_i$ of degree $t$. 

\begin{rem}
A good supply of examples arise as follows. Let $F/\Q_p$ be a finite extension with integers $\OO_F$ and residue field
$k_F=\OO_F/\m_F$. We let $v$ be the valuation on $F$ with $v(p)=1$. An $n$-variable formal group law $\mathcal{F}(\underline{X},\underline{Y})$ over $\OO_F$ gives a group structure on $\m_F^n=\m_F \times \cdots \times \m_F$ ($n$ factors). We denote the resulting group by $\tilde{G}$ (or $\tilde{G}_{\mathcal{F}}$ if need should arise to emphasize the formal group law). These are known as Serre's standard groups. We define a function $\omega: \tilde{G} \backslash \{1\} \rightarrow (0,\infty)$ by 
$$
\omega(x_1,\ldots,x_n)={\min}_{i=1,\ldots,n} v(x_i).
$$
This is of course not a $p$-valuation on $\tilde{G}$ itself unless $e(F/\Q_p)<p-1$, but one can show (cf. \cite[Lem.~2.2.2]{HKN11}) that $\omega$ does define a $p$-valuation on the normal subgroup
$$
G=\{g \in \tilde{G}: \omega(g)>\frac{1}{p-1}\}.
$$
Moreover $(G,\omega)$ is saturated, of rank $d=n[F:\Q_p]$, and equi-$p$-valued if and only if $F/\Q_p$ is unramified; in which case 
a $\Z_p$-basis for $\OO_F$ yields an ordered basis for $G$ whose elements all have valuation $t=1$. 

For instance one can start from a smooth affine group scheme $\mathcal{G}_{/\OO_F}$ of finite type, and consider its formal completion $\hat{\mathcal{G}}$ along the unit
$\Spec(\OO_F)\rightarrow \mathcal{G}$. This is a smooth formal group $\hat{\mathcal{G}}\simeq \text{Spf} (\OO_F[\![X_1,\ldots,X_n]\!])$, see \cite[p.~41]{Dem86}. Therefore
$\hat{\mathcal{G}}$ defines an $n$-variable formal group law $\mathcal{F}(\underline{X},\underline{Y})$ over $\OO_F$ to which we can associate a $p$-valued group
$(G,\omega)$ as in the previous paragraph. Unwinding the definitions it is easy to check that
$$
\tilde{G} \simeq \ker\big(\mathcal{G}(\OO_F)\longrightarrow \mathcal{G}(k_F)\big).
$$
Note that $\tilde{G}=G$ when $e(F/\Q_p)=1$ and $p>2$.

On the negative side one can show that when $p>3$ there is no way to equip $1+\m_D$ with a $p$-valuation for which it becomes equi-$p$-valued. Here $D/\Q_p$ is the division quaternion algebra and $\m_D \subset \OO_D$ the maximal two-sided ideal of the maximal order. Indeed one can compute the Betti numbers
$\dim_{\F_p}H^i(1+\m_D,\F_p)$ to be $(1,3,4,3,1)$ which is not a row of Pascal's triangle, cf theorem \ref{coho} below. We found this example in \cite[Ex.~3.2.1]{HKN11}.
\end{rem}

The following important theorem is due to Lazard.

\begin{thm}\cite[Ch.~V Prop.~2.5.7.1]{Laz65}\label{coho}
Let $(G,\omega)$ be an equi-$p$-valued group of finite rank. Then there is a natural isomorphism of 
graded $\F_p$-algebras
$$
H^*(G,\F_p) \overset{\sim}{\longrightarrow} \bigwedge \Hom(G,\F_p).
$$
\end{thm}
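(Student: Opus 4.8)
The plan is to compute the Yoneda algebra $\Ext_{\Omega}^*(\F_p,\F_p)$ with $\Omega=\F_p[\![G]\!]$, by exploiting the filtration studied in Section \ref{valu}, and then translate the answer into group cohomology via the identification $H^*(G,\F_p)=\Ext_G^*(\F_p,\F_p)=\Ext_{\Omega}^*(\F_p,\F_p)^{\op}$ recalled in Section \ref{yon} (graded-commutativity of cohomology makes the passage to the opposite algebra harmless). First I would record the input from Lazard's theory. Since $(G,\omega)$ is equi-$p$-valued the Lie algebra $\frak{g}$ is abelian and concentrated in the single degree $t$, so Corollary \ref{lazmod} gives a graded isomorphism $\gr(\Omega)\simeq S(\frak{g})$ onto the polynomial algebra on a basis $\xi_1,\dots,\xi_d$ of $\frak{g}$. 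The structural fact I will use is that $S(\frak{g})$ is a \emph{Koszul} algebra: its Ext-algebra is computed by the Koszul resolution from the introduction,
$$
\Ext_{S(\frak{g})}^*(\F_p,\F_p)\simeq \bigwedge \frak{g}^{\vee},
$$
and it is \emph{diagonally pure}, meaning that $\Ext^i$ sits in a single internal degree ($it$ in the grading of $\gr\Omega$).

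Next I would set up the comparison between $\Omega$ and its associated graded. The filtration $\Omega\supset \Omega_v\supset \Omega_{v+}\supset\cdots$ induces a filtration on a free resolution of $\F_p$ over $\Omega$ (either the bar resolution of Section \ref{yon}, or a filtered lift to $\Omega$ of the Koszul complex of $S(\frak{g})$), hence a multiplicative spectral sequence of graded $\F_p$-algebras
$$
E_1^{*,*}=\Ext_{\gr\Omega}^*(\F_p,\F_p)=\bigwedge \frak{g}^{\vee}\ \Longrightarrow\ \Ext_{\Omega}^*(\F_p,\F_p).
$$
Convergence holds because $\Omega$ is complete for its filtration and each quotient $\Omega/\Omega_v$ is finite-dimensional. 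The heart of the argument is that this spectral sequence \emph{degenerates at $E_1$}: the differentials raise cohomological degree by one while respecting the internal grading inherited from $\gr\Omega$, so on the pure $E_1$-page (where cohomological degree $i$ forces internal degree $it$) every $d_r$ would have to alter the internal degree, and must therefore vanish.

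Degeneration yields an isomorphism of graded algebras $\gr\,\Ext_{\Omega}^*(\F_p,\F_p)\simeq \bigwedge\frak{g}^{\vee}$. Since this is generated in degree one, I would lift a basis of $\Ext^1_{\Omega}(\F_p,\F_p)=H^1(G,\F_p)$ and use graded-commutativity of cup products -- here $p$ odd forces squares of odd-degree classes to vanish -- to produce a natural algebra map $\bigwedge H^1(G,\F_p)\to H^*(G,\F_p)$, which is an isomorphism by the dimension count $\dim_{\F_p}H^i=\binom{d}{i}$. Finally I would identify $\frak{g}^{\vee}$ with $\Hom(G,\F_p)=H^1(G,\F_p)$, both being dual to $G/\overline{G^p[G,G]}\simeq\frak{g}$, which gives the asserted natural isomorphism $H^*(G,\F_p)\overset{\sim}{\longrightarrow}\bigwedge\Hom(G,\F_p)$.

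The main obstacle I expect is the degeneration step, and in particular the bookkeeping of the bigrading: one must verify that the spectral sequence differentials genuinely preserve the internal degree coming from $\gr\Omega$ (so that Koszul purity applies), and that the filtration on the resolution is exhaustive, complete, and multiplicative so that the collapse transports the cup-product structure. This is precisely where the equi-$p$-valued hypothesis is indispensable: it is what places $\frak{g}$ in a single degree and makes $\gr\Omega$ a Koszul polynomial algebra, so that the $E_1$-page concentrates on a single diagonal. Without it the associated graded is a genuinely non-commutative enveloping algebra and there is no reason for the spectral sequence to collapse. An alternative, more hands-on route would lift the Koszul differentials $e_i\mapsto g_i-1$ directly to $\Omega$ and correct them by higher-filtration terms to achieve $\tilde d^2=0$; there the obstacle shifts to showing these correction terms exist and do not affect the reduction modulo $\m$, but the spectral sequence formulation sidesteps that explicit deformation problem.
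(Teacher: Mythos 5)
Your argument is essentially correct, but it takes a genuinely different route from the one the paper (following Lazard) sketches. The paper's proof proceeds via Serre's lifting lemma: one lifts the Koszul resolution $Y_{\bullet}$ of $\F_p[\pi]$ over $U(\gr(G))$ to the quasi-minimal free resolution $X_{\bullet}$ over $\Z_p[\![G]\!]$ with $\gr X_{\bullet}=Y_{\bullet}$, shows that for equi-$p$-valued $G$ this resolution is actually \emph{minimal} (the differentials of $\F_p\otimes X_{\bullet}$ vanish, whence the Betti numbers $\binom{d}{i}$), and then matches cup products with those of $H^*(\frak{g},\F_p)=\bigwedge\frak{g}^{\vee}$; that multiplicative comparison is the delicate step in \cite[Ch.~V Sect.~2.5.6]{Laz65}. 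This is exactly the ``more hands-on route'' you mention at the end. Your May-type spectral sequence repackages the same content: minimality of the quasi-minimal resolution is equivalent to the degeneration you need, and multiplicativity of the spectral sequence (for the multiplicative filtration on the reduced bar complex) absorbs the cup-product comparison. What your route buys is formal cleanliness and a transparent explanation of where equi-$p$-valuedness enters (Koszul purity of $\gr\Omega\simeq S(\frak{g})$); what it costs is a genuine convergence argument for the complete, unbounded filtration (conditional convergence plus the stabilization $E_1=E_{\infty}$ yields strong convergence, by Boardman) and the loss of the explicit resolution.

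One correction to the step you yourself flag as the main obstacle: the differentials $d_r$, $r\geq 1$, do \emph{not} preserve the internal (filtration) degree --- by the very construction of the spectral sequence of a filtered complex they shift it by $r$. The degeneration still holds, but for a different reason than the one you state. Dualizing the decreasing multiplicative filtration on $\bar{\Omega}^{\hat{\otimes}\bullet}$ gives an increasing exhaustive filtration on the dual bar complex, for which $d_r$ raises cohomological degree by $1$ and \emph{lowers} filtration degree by $r$; remaining on the pure diagonal (internal degree $=t\cdot(\text{cohomological degree})$) would instead require raising it by $t>0$, so every $d_r$ carries the diagonal off itself and therefore vanishes. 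Getting this bookkeeping right matters: with the opposite sign convention purity alone would leave the differential shifting internal degree by exactly $t$ unconstrained, and the vanishing of precisely that differential --- equivalently, the minimality of Lazard's quasi-minimal resolution --- is where the non-formal content of the theorem sits. Once you fix this, and record that each $\Ext_{\Omega}^i(\F_p,\F_p)$ then lives in a single filtration degree (so $\gr\Ext=\Ext$ as graded algebras and no lifting of generators is actually needed), your proof closes.
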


We outline some of the key steps in the proof. Lazard starts from the Cartan-Eilenberg resolution $Y_{\bullet}$ of the trivial $U(\gr(G))$-module $\gr(\Z_p)=\F_p[\pi]$, cf. \cite[Thm.~7.1]{CE99}:
$$
\cdots \longrightarrow U(\gr(G))\otimes \bigwedge^2 \gr(G)\longrightarrow U(\gr(G))\otimes \bigwedge^1 \gr(G) \longrightarrow U(\gr(G))\longrightarrow \F_p[\pi] \longrightarrow 0.
$$
By a lemma of Serre one can lift $Y_{\bullet}$ to a complex $X_{\bullet}$ of $\Z_p[\![G]\!]$-modules, in the sense that $\gr X_{\bullet}=Y_{\bullet}$. It turns out that $X_{\bullet}$ is in fact a free resolution of $\Z_p$, which Lazard dubs the quasi-minimal resolution, cf. \cite[p.~165]{Laz65}. For equi-$p$-valued groups $(G,\omega)$ the quasi-minimal resolution is indeed minimal; meaning the differentials of $\F_p \otimes X_{\bullet}$ all vanish. In particular this allows one to immediately compute the Betti numbers
$\dim_{\F_p}H^i(G,\F_p)=\binom{d}{i}$ where $d=\rk(G)$. A more detailed analysis of $Y_{\bullet}$ and its counterpart for the mod $p$ Lie algebra $\frak{g}=\gr(G)/\pi \gr(G)$ (we take $k=\F_p$ here) yields an isomorphism of graded $\F_p$-algebras
$$
H^*(G,\F_p) \overset{\sim}{\longrightarrow} H^*(\frak{g},\F_p).
$$
Relating the cup products is a highly non-trivial step found in \cite[Ch.~V Sect.~2.5.6]{Laz65}. Since $\frak{g}$ is abelian its Lie algebra cohomology is 
$\bigwedge \frak{g}^{\vee}$ where $\frak{g}^{\vee}=\Hom_{\F_p}(\frak{g},\F_p)\simeq \Hom(G,\F_p)$. For the last identification note that homomorphisms $G \rightarrow \F_p$ factor through $G/G^p$ and as vector spaces $\gr(G)=(G/G^p)\oplus \pi \gr(G)$ when $(G,\omega)$ is saturated, cf. \cite[Lem.~26.10]{Sch11}.

For an arbitrary finite coefficient field $k \supset \F_p$ we immediately deduce the more general result (via the K\"{u}nneth formula for group cohomology say); still assuming 
$(G,\omega)$ is equi-$p$-valued of course.

\begin{cor}\label{cohom}
$H^*(G,k) \overset{\sim}{\longrightarrow} \bigwedge \Hom(G,k)\simeq \bigwedge \frak{g}^{\vee}$.
\end{cor}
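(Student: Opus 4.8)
The plan is to bootstrap from the base field $\F_p$, where the statement is precisely Theorem \ref{coho}, to an arbitrary finite coefficient field $k\supset\F_p$ by a flat base-change argument; this is the content of the ``K\"unneth'' remark preceding the corollary. First I would observe that, since $G$ acts trivially on $k$ and $k$ is a finite free (hence flat) $\F_p$-module, the continuous cochain complex satisfies $C^{\bullet}(G,k)\cong C^{\bullet}(G,\F_p)\otimes_{\F_p}k$, the differentials being extended $\F_p$-linearly. Flatness of $k$ lets cohomology pass through the tensor product -- the Tor-free form of the universal coefficient theorem -- yielding
$$
H^*(G,k)\overset{\sim}{\longrightarrow} H^*(G,\F_p)\otimes_{\F_p}k.
$$

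Next I would insert Lazard's computation over $\F_p$ and commute the exterior algebra past the base change. Theorem \ref{coho} gives $H^*(G,\F_p)\overset{\sim}{\longrightarrow}\bigwedge\Hom(G,\F_p)$, and for any finite-dimensional $\F_p$-vector space $W$ there is a canonical isomorphism of graded $k$-algebras $(\bigwedge_{\F_p}W)\otimes_{\F_p}k\cong\bigwedge_k(W\otimes_{\F_p}k)$. Taking $W=\Hom(G,\F_p)$ I obtain
$$
H^*(G,k)\overset{\sim}{\longrightarrow}\bigwedge\nolimits_k\big(\Hom(G,\F_p)\otimes_{\F_p}k\big).
$$
To identify the generating space, I would use that every continuous homomorphism $G\to k$ kills commutators and $p$-th powers, hence factors through the (finite, $k$-independent) Frattini quotient $V=G/\overline{[G,G]G^p}$; thus $\Hom(G,k)=\Hom_{\F_p}(V,k)=\Hom(G,\F_p)\otimes_{\F_p}k$, which gives the first asserted isomorphism $H^*(G,k)\cong\bigwedge\Hom(G,k)$. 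The second, $\Hom(G,k)\simeq\frak{g}^{\vee}$, follows by extending scalars in the identification $\Hom(G,\F_p)\simeq\frak{g}_0^{\vee}$ recorded in the proof of Theorem \ref{coho} (where $\frak{g}_0=\gr(G)/\pi\gr(G)$ is the $\F_p$-form), together with $\frak{g}=k\otimes_{\F_p}\frak{g}_0$.

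The hard part is not the existence of the underlying linear isomorphisms, which are formal, but checking that each arrow is an isomorphism of graded $k$-\emph{algebras} rather than merely of graded vector spaces, since the corollary asserts a ring isomorphism. Concretely I would verify that cup products are respected under the flat base change -- this reduces to the fact that the cup-product formula on cochains is $\F_p$-bilinear in the arguments and multiplicative in the (commutative) coefficients, so extending scalars to $k$ is compatible with the $k$-valued cup product -- and that the canonical comparison map $(\bigwedge W)\otimes k\to\bigwedge(W\otimes k)$ is multiplicative. Both are routine, but they are the only points where anything beyond bookkeeping occurs.
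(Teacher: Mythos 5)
Your proposal is correct and follows essentially the same route as the paper, which deduces the corollary from Theorem \ref{coho} by exactly this flat base-change/K\"unneth argument (the paper compresses it to ``we immediately deduce the more general result (via the K\"unneth formula for group cohomology say)''). Your elaboration of the multiplicativity checks and of the identification $\Hom(G,k)=\Hom(G,\F_p)\otimes_{\F_p}k$ via the Frattini quotient fills in precisely the routine details the paper leaves implicit.
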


Here the exterior and the dual are taken over $k$.

There is a close connection between equi-$p$-valued groups and {\it{uniform}} pro-$p$-groups, as studied in \cite[Part I.4]{DdSMS} for instance. The latter are topologically finitely generated pro-$p$-groups for which $G/\overline{G^p}$ is abelian (for $p>2$) and the lower $p$-series $G=G_1\geq G_2\geq \cdots $ has the property that the indices
$[G_i:G_{i+1}]=p^d$ are independent of $i$. Recall that the lower $p$-series is defined as $G_{i+1}=G_i^p[G_i,G]$. One can verify that $\omega(g)=\sup\{i \in \N: g \in G_i\}$ defines a $p$-valuation for which $(G,\omega)$ becomes equi-$p$-valued with $t=1$. Conversely, Weigel's remarks summarized in \cite[Prop.~A.1]{HKN11} show that a saturated equi-$p$-valued group is uniform. 

\section{$A_{\infty}$-algebras and their minimal models}\label{model}

The notion of an $A_{\infty}$-algebra ("strongly homotopy associative algebra") first arose in Stasheff's 1961 Princeton Ph.D. thesis \cite{Sta61}. In topology loop spaces carry a natural operation -- composition of loops -- which is only associative up to homotopy. In homological algebra $A_{\infty}$-algebras naturally arise as follows. Suppose $A^{\bullet}$ is a complex of vector spaces, $B^{\bullet}$ is a DGA, and there is a quasi-isomorphism $i:A^{\bullet}\rightarrow B^{\bullet}$ along with a morphism of chain complexes
$p: B^{\bullet}\rightarrow A^{\bullet}$ such that $ip\sim \text{Id}_{B^{\bullet}}$ ("homotopy retract"). Say $\text{Id}_{B^{\bullet}}-ip=dh+hd$ for some homotopy $h: B^{\bullet}\rightarrow B^{\bullet}$. When one transfers the multiplication on $B^{\bullet}$ via $i$ and $p$ to $A^{\bullet}$ the resulting operation $m_2: A^{\bullet}\otimes A^{\bullet}\rightarrow A^{\bullet}$ turns out to only be associative up to homotopy, in the following sense: The "associator" $m_2(m_2\otimes 1)-m_2(1\otimes m_2)$ lives in $\Hom(A^{\bullet \otimes 3},A^{\bullet})$ and one can explicitly write down an $m_3:A^{\bullet \otimes 3} \rightarrow A^{\bullet}$ in terms of $h$ for which 
$$
m_2(m_2\otimes 1)-m_2(1\otimes m_2)=\partial(m_3)
$$ 
where $\partial$ is the differential on the morphism complex. Continuing this way one finds a whole sequence of higher multiplications $m_n: A^{\bullet \otimes n} \rightarrow A^{\bullet}$ (of degree $2-n$) such that
$$
{\sum}_{r+s+t=n:  1<s<n}(-1)^{rs+t+1} m_{r+1+t}(1^{\otimes r}\otimes m_s \otimes 1^{\otimes t})=\partial(m_n).
$$
Note that $m_1=d_{A^{\bullet}}$ and for $n=2$ the above identity is just the Leibniz rule. This idea of homotopy transfer is explained and illustrated very clearly in 
\cite[Ch.~1]{Val14}. In the 1980's Kadeishvili developed the algebraic point of view further and laid the foundations in a series of papers \cite{Kad83, Kad86, Kad88} which unfortunately are hard to find (and in Russian). Some of his results were later reproved by Merkulov in \cite{Mer99}. Kontsevich made $A_{\infty}$-algebras a hot topic in his 1994 ICM paper 
\cite{Kon95} in which he proposed a connection to mirror symmetry via the Fukaya $A_{\infty}$-category of a symplectic manifold. The subject gained momentum and remains a very active research area today. 

Keller has several excellent introductions to the subject \cite{Kel01,Kel02,Kel06}. Below we will follow the conventions of \cite{Kel01}.

\begin{defn}\label{Aalg}\cite[Def.~3.1]{Kel01}
An $A_{\infty}$-algebra over a field $k$ is a a graded vector space $A^{\bullet}=\bigoplus_{i\in \Z}A^i$ endowed with homogeneous $k$-linear maps
$$
m_n: A^{\bullet \otimes n}\longrightarrow A^{\bullet} \y \y \y \y \y n\geq 1
$$
such that $m_n$ has degree $2-n$ and this sequence of higher multiplications $(m_n)_{n\geq 1}$ satisfies the following properties.
\begin{itemize}
\item[(1)] $m_1 \circ m_1=0$ -- so $(A^{\bullet},m_1)$ is a complex;
\item[(2)] For $n\geq 1$ the Stasheff identities are satisfied. That is,
$$
\sum (-1)^{r+st}m_{r+1+t}(1^{\otimes r}\otimes m_s\otimes 1^{\otimes t})=0
$$
where the sum runs over all decompositions $r+s+t=n$ with $s \geq 1$ (and $r,t$ non-negative). 
\end{itemize}
(For $n=1$ the identity in (2) reduces to (1). For $n=2$ the identity shows $m_1$ is a derivation.)
\end{defn}

The Koszul sign convention is in force here when we apply $m_n$ to homogeneous elements (meaning when we swap $x$ and $y$ a sign $(-1)^{\deg(x)\deg(y)}$ appears). For example, for homogeneous $a, b \in A^{\bullet}$ we have 
$$
(m_1\otimes 1 + 1 \otimes m_1)(a\otimes b)=m_1(a)\otimes b+(-1)^{\deg(a)}a \otimes m_1(b)
$$
since we swap $m_1$ (of degree one) and $a$ in the second term.

Note that a DGA is the same thing as an $A_{\infty}$-algebra with $m_n=0$ for all $n \geq 3$. Furthermore the cohomology $h^*(A^{\bullet})$ of any $A_{\infty}$-algebra $A^{\bullet}$ 
is a graded algebra under the multiplication induced by $m_2$. We often view $h^*(A^{\bullet})$ as a {\it{minimal}} DGA (meaning it has differential $d=0$). One of the most fundamental results of the whole theory is the fact that $h^*(A^{\bullet})$ carries a natural $A_{\infty}$-structure. This is what we will discuss and utilize below. 

First we recall how morphisms are defined.

\begin{defn}\label{Amor}\cite[Def.~3.4]{Kel01}
A morphism $f: A^{\bullet}\rightarrow B^{\bullet}$ between $A_{\infty}$-algebras $A^{\bullet}$ and $B^{\bullet}$ is a collection of homogeneous $k$-linear maps
$$
f_n: A^{\bullet \otimes n}\longrightarrow B^{\bullet} \y \y \y \y \y n\geq 1
$$
such that $f_n$ has degree $1-n$ and the sequence $(f_n)_{n\geq 1}$ satisfies the morphism identity ($\forall n$)
$$
\sum (-1)^{r+st} f_{r+1+t}(1^{\otimes r}\otimes m_s^A\otimes 1^{\otimes t})=\sum (-1)^{\sigma} m_r^B(f_{i_1}\otimes \cdots \otimes f_{i_r}).
$$
Here the first sum runs over the same decompositions $r+s+t=n$ as in Definition \ref{Aalg}(2), and the second sum runs over 
$1\leq r\leq n$ and all decompositions $i_1+\cdots+i_r=n$. The sign $(-1)^{\sigma}$ is given by
$$
\sigma:=\sum_{j=1}^{r-1}(r-j)(i_j-1)=(r-1)(i_1-1)+(r-2)(i_2-1)+\cdots+(i_{r-1}-1).
$$
(For $n=1$ this says $f_1: A^{\bullet} \rightarrow B^{\bullet}$ is a morphism of complexes.) We say $f$ is a quasi-isomorphism if $f_1$ induces an isomorphism
on cohomology $h^*(A^{\bullet})\overset{\sim}{\longrightarrow} h^*(B^{\bullet})$.
\end{defn}

The identity morphism $\text{Id}: A^{\bullet}\rightarrow A^{\bullet}$ is the one with $\text{Id}_1=\text{Id}_{A^{\bullet}}$ and $\text{Id}_n=0$ for $n>1$, and there is a natural way to compose morphisms which we will not recall here (cf. \cite[p.~10]{Kel01}) which yields the category of $A_{\infty}$-algebras over $k$. 

\begin{rem}\label{sunit}
In the applications we have in mind our $A_{\infty}$-algebras will be strictly {\it{unital}}, which means $A^{\bullet}$ contains a two-sided multiplicative identity $1=1_{A^{\bullet}}\in A^0$ for $m_2$ with the property $m_n(a_1\otimes \cdots \otimes a_n)=0$ if $n\neq 2$ and some $a_i=1$. A morphism $f: A^{\bullet}\rightarrow B^{\bullet}$ between strictly unital $A_{\infty}$-algebras is said to be strictly unital if $f_1(1_{A^{\bullet}})=1_{B^{\bullet}}$ and $f_n(a_1\otimes \cdots \otimes a_n)=0$ if $n>1$ and some $a_i=1$. 
\end{rem}

The following key result of Kadeishvili is usually referred to as the minimal model theorem.

\begin{thm}\label{kade}\cite[Sect.~3.3]{Kel01}
Let $A^{\bullet}$ be an $A_{\infty}$-algebra. Then the cohomology $h^*(A^{\bullet})$ has an $A_{\infty}$-structure $(\mu_n)_{n\geq 1}$ with the following properties.
\begin{itemize}
\item[(1)] $\mu_1=0$ ("minimality") and $\mu_2$ is induced by the $m_2$ on $A^{\bullet}$;
\item[(2)] There is a quasi-isomorphism of $A_{\infty}$-algebras 
$$
f:h^*(A^{\bullet})\longrightarrow A^{\bullet}
$$ 
lifting the identity on $h^*(A^{\bullet})$; that is $h^i(f_1)=\text{Id}_{h^i(A^{\bullet})}$ for all $i\in \Z$.
\end{itemize}
This $A_{\infty}$-structure $(\mu_n)_{n\geq 1}$  is uniquely determined by (1) and (2) up to (non-canonical) isomorphism of $A_{\infty}$-algebras.
\end{thm}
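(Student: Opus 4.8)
The plan is to construct the higher products $\mu_n$ on $H := h^*(A^\bullet)$ together with the components $f_n$ of the quasi-isomorphism $f$ simultaneously, by a single induction on $n$, and then to deduce uniqueness from invertibility of $A_\infty$-quasi-isomorphisms. Since we work over a field, I first fix a homotopy retract of the complex $(A^\bullet, m_1)$ onto its cohomology: a $k$-linear section $f_1 = i \colon H \to A^\bullet$ landing in the cocycles and inducing the identity on cohomology, a projection $p \colon A^\bullet \to H$ with $p i = \mathrm{Id}_H$, and a degree $-1$ homotopy $h \colon A^\bullet \to A^\bullet$ satisfying $\mathrm{Id}_{A^\bullet} - i p = m_1 h + h m_1$. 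I set $\mu_1 := 0$ and let $\mu_2$ be the product induced by $m_2$ on cohomology; these satisfy (1), and $f_1$ is by construction a morphism of complexes lifting the identity, so the base of the induction holds.

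For the inductive step, suppose $\mu_s$ and $f_s$ have been defined for all $s < n$ so that the Stasheff identities of Definition \ref{Aalg}(2) and the morphism identities of Definition \ref{Amor} hold in all degrees $< n$. Because $\mu_1 = 0$, the differential that $(\mu_s)$ induces on $H^{\otimes n}$ vanishes, so in the order-$n$ morphism identity every term factoring through $\mu_1$ drops out and the only occurrences of the unknowns $\mu_n, f_n$ are the two extremal terms $f_1 \circ \mu_n$ and $m_1 \circ f_n$. Collecting all the remaining terms, which involve only the already-constructed $m_\bullet$, $\mu_{<n}$, $f_{<n}$, into a single map $W_n \colon H^{\otimes n} \to A^\bullet$ of degree $2 - n$, the identity to be arranged becomes (after absorbing the Koszul signs into $W_n$) simply $f_1 \circ \mu_n - m_1 \circ f_n = W_n$. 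I would then \emph{define} $\mu_n := p \circ W_n$ and $f_n := -\,h \circ (W_n - f_1 \circ \mu_n)$. A short computation with the retract relations (using $p f_1 = \mathrm{Id}_H$ and the fact that $h m_1$ vanishes on cocycles) shows these choices solve the displayed equation, \emph{provided} $W_n$ is a cocycle.

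The heart of the matter, and the step I expect to be the main obstacle, is precisely to verify that $W_n$ is a cocycle, i.e.\ $m_1 \circ W_n = 0$; only then does $p \circ W_n$ represent a well-defined cohomology class, and only then does $W_n - f_1\mu_n$ lie in the image of $m_1$ so that $f_n$ exists. This is a bookkeeping argument: one applies $m_1$ to the defining expression for $W_n$ and repeatedly substitutes the Stasheff identities for $(m_\bullet)$ on $A^\bullet$ together with the induction hypothesis (the Stasheff identities for $\mu_{<n}$ and the morphism identities in degrees $< n$), checking that all resulting terms cancel in pairs once the Koszul signs are tracked. This is the familiar associahedral sign verification; the cleanest way to organize it, which I would either carry out or reference, is to package the whole induction into Merkulov's closed tree formulas, where $\mu_n$ and $f_n$ are sums over planar binary trees with internal edges decorated by $h$ and with $i, p$ at the root and leaves, and where cocycle-ness of $W_n$ becomes transparent from the relation $m_1 h + h m_1 = \mathrm{Id} - i p$. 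One checks along the way that $\mu_n$ satisfies its own order-$n$ Stasheff identity, which comes out by applying $p$ to the same manipulation. (If strict unitality as in Remark \ref{sunit} is wanted, I would choose the retract so that $i$, $p$, $h$ respect the unit, which forces $\mu_n$ and $f_n$ to vanish whenever a $1$ is inserted.)

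For the final uniqueness clause, suppose $(H, \mu)$ and $(H, \mu')$ are two minimal $A_\infty$-structures on the cohomology, each equipped with a quasi-isomorphism to $A^\bullet$ as in (2). I would invoke the standard fact (see Keller's surveys cited above) that any $A_\infty$-quasi-isomorphism admits a quasi-inverse; composing one of these quasi-isomorphisms with a quasi-inverse of the other yields an $A_\infty$-quasi-isomorphism $g \colon (H, \mu) \to (H, \mu')$. Since both sides are minimal, $g_1 \colon H \to H$ is by definition an isomorphism on cohomology, but the cohomology is just the underlying graded space $H$ (as $\mu_1 = \mu_1' = 0$), so $g_1$ is already a linear isomorphism; and an $A_\infty$-morphism whose linear term is invertible is itself invertible, whence $g$ is an $A_\infty$-isomorphism identifying the two structures. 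The non-canonicity asserted in the statement reflects exactly the freedom in the initial choice of retract $(i, p, h)$.
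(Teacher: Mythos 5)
Your proposal is correct and follows essentially the same route as the paper, which proves the theorem by homotopy transfer along a retract $(i,p,h)$ obtained from choosing complements $A^i=B^i\oplus H^i\oplus C^i$ (the paper merely sketches this and defers the explicit formulas to \cite[Thm.~2.2, Prop.~2.3]{LPWZ} and Merkulov's tree description, which you also invoke). Your inductive construction of $\mu_n=p\circ W_n$ and $f_n$, the cocycle check for $W_n$, and the uniqueness argument via invertibility of $A_\infty$-quasi-isomorphisms between minimal models are exactly the standard Kadeishvili argument the paper is citing.
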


Eventually we will apply this result to the DGA $\mathcal{H}^{\bullet}$ and get an $A_{\infty}$-structure on $h^*(\mathcal{H}^{\bullet})\simeq E(\Omega)$.

The simple idea behind theorem \ref{kade} is very similar to the homotopy transfer discussed in the first paragraph of this section: Say we start with an $A_{\infty}$-algebra $A^{\bullet}$, or even just a DGA. For each $i$ let $B^i\subset Z^i$ be the coboundaries and cocycles in $A^i$  and once and for all {\it{choose}} vector space complements 
$$
Z^i=B^i\oplus H^i \y \y \y \y \y A^i=Z^i\oplus C^i \y \y (=B^i\oplus H^i \oplus C^i).
$$
Let $i:H^{\bullet} \rightarrow A^{\bullet}$ be the inclusion map and $p:A^{\bullet} \rightarrow H^{\bullet}$ be the projection map. Clearly $pi=\text{Id}_{H^{\bullet}}$. On the other hand $ip\sim \text{Id}_{A^{\bullet}}$. To build a homotopy $h: A^{\bullet} \rightarrow A^{\bullet}$ such that $\text{Id}_{A^{\bullet}}-ip=dh+hd$ first observe that the composition
$$
C^{i-1}\longrightarrow A^{i-1} \overset{d}\longrightarrow B^i
$$
is an isomorphism. This is immediate. Let $\gamma^i: B^i \rightarrow C^{i-1}$ denote its inverse, and extend it to a map $h^i: A^i \rightarrow C^{i-1}$ by setting $h^i=0$ on 
$H^i \oplus C^i$. Unwinding these definitions it is easily checked that $d \circ h^i$ is the projection $A^i\rightarrow B^i$ and that $h^{i+1}\circ d$ is the projection 
$A^i \rightarrow C^i$. Thus indeed $\text{Id}_{A^{\bullet}}-ip=dh+hd$. Now one simply transfers the $A_{\infty}$-structure on $A^{\bullet}$ to $H^{\bullet}\simeq h^*(A^{\bullet})$ via $i$ and $p$. One can explicitly write down the higher multiplications $(\mu_n)_{n\geq 1}$ in terms of the homotopy $h$ and $(m_n)_{n \geq 1}$, cf. \cite[Thm.~2.2]{LPWZ} 
and \cite[Ex.~5.2.8]{WE18} for DGA's.
Same for the quasi-isomorphism $H^{\bullet}\rightarrow A^{\bullet}$, cf. \cite[Prop.~2.3]{LPWZ}.

\section{$A_{\infty}$-modules and the derived category}\label{dercat}

Let $A^{\bullet}$ be an $A_{\infty}$-algebra. A left $A_{\infty}$-module over $A^{\bullet}$ is a graded $k$-vector space $M^{\bullet}=\bigoplus_{i \in \Z}M^i$ with homogeneous 
$k$-linear maps
$$
\nu_n: A^{\bullet \otimes n-1}\otimes M^{\bullet}\longrightarrow M^{\bullet} \y \y \y \y \y n\geq 1,
$$
where $\nu_n$ has degree $2-n$ and which satisfy the analogues of the Stasheff identities (Def. \ref{Aalg}(2)):
$$
\sum (-1)^{r+st}\nu_{r+1+t}(1^{\otimes r}\otimes m_s\otimes 1^{\otimes t})=0.
$$
(When $t=0$ the term $\nu_{r+1+t}(1^{\otimes r}\otimes m_s\otimes 1^{\otimes t})$ is interpreted as $\nu_{r+1}(1^{\otimes r}\otimes \nu_s)$.) For $n=1$ this says
$\nu_1\circ\nu_1=0$. An $A_{\infty}$-module structure on a complex $(M^{\bullet},\nu_1)$ amounts to a morphism of $A_{\infty}$-algebras
$A^{\bullet}\rightarrow \End_k^{\bullet}(M^{\bullet})$ where the target is thought of as a DGA. In all our applications $A^{\bullet}$ will be strictly unital, cf. remark \ref{sunit}.
We will henceforth exclusively consider {\it{strictly unital}} $A_{\infty}$-modules $M^{\bullet}$ which means $m_2(1\otimes x)=x$ for all $x \in M^{\bullet}$ and 
$m_n(a_2\otimes \cdots \otimes a_n\otimes x)=0$ for $n>2$ if some $a_i=1$.

Morphisms between $A_{\infty}$-modules $M^{\bullet}$ and $N^{\bullet}$ are defined analogously to Definition \ref{Amor}. A morphism $f: M^{\bullet}\rightarrow N^{\bullet}$ is a collection of 
homogeneous $k$-linear maps
$$
f_n: A^{\bullet \otimes n-1}\otimes M^{\bullet} \longrightarrow N^{\bullet} \y \y \y \y \y n\geq 1,
$$ 
where $f_n$ has degree $1-n$ which are required to satisfy the identity (cf. \cite[Eqn.~(4.2), p. 15]{Kel01}):
$$
\sum (-1)^{r+st} f_{r+1+t}(1^{\otimes r}\otimes m_s\otimes 1^{\otimes t})=\sum (-1)^{u(v+1)} \nu_{u+1}(1^{\otimes u}\otimes f_v)
$$
with conventions in the $t=0$ case as above. On the right we are summing over decompositions $n=u+v$ with $v\geq 1$ and $u$ non-negative. Again we will only consider
{\it{strictly unital}} $f$ which means $f_n(a_2\otimes \cdots \otimes a_n\otimes x)=0$ for $n>1$ if some $a_i=1$.

There is an obvious identity morphism
$\text{Id}: M^{\bullet}\rightarrow M^{\bullet}$ and a natural way to compose morphisms, which results in the category $C_{\infty}(A^{\bullet})$ of strictly unital left $A_{\infty}$-modules over $A^{\bullet}$, with strictly unital morphisms.

There is a notion of a morphism $f: M^{\bullet}\rightarrow N^{\bullet}$ being null-homotopic which we will not recall in detail here, see \cite[p.~16]{Kel01} for the explicit formula expressing $f_n$ in terms of a homotopy $h_n: A^{\bullet \otimes n-1}\otimes M^{\bullet} \rightarrow N^{\bullet}$ (here $h_n$ has degree $-n$). In the homotopy category 
$K_{\infty}(A^{\bullet})$ morphisms are taken modulo homotopy. To arrive at the derived category we should invert all quasi-isomorphisms in $K_{\infty}(A^{\bullet})$, but another fundamental result of Kadeishvili says this is unnecessary:

\begin{thm}\cite[Thm.~4.2]{Kel01}
Let $f: M^{\bullet}\rightarrow N^{\bullet}$ be a quasi-isomorphism of $A_{\infty}$-modules. Then $f$ has a homotopy inverse morphism $g: N^{\bullet} \rightarrow M^{\bullet}$
of $A_{\infty}$-modules. ($f\circ g \sim \text{Id}_{N^{\bullet}}$ and $g\circ f \sim \text{Id}_{M^{\bullet}}$.)
\end{thm}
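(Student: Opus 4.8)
The plan is to show that the homotopy category $K_{\infty}(A^{\bullet})$ already computes the correct morphism sets, so that a quasi-isomorphism automatically admits a two-sided homotopy inverse and no localization is needed. The only input from the hypothesis is recorded first: by Definition of a quasi-isomorphism, the first component $f_1$ is a quasi-isomorphism of the complexes $(M^{\bullet},\nu_1^M)$ and $(N^{\bullet},\nu_1^N)$ of $k$-vector spaces. Since $k$ is a field, $f_1$ is in fact a homotopy equivalence of complexes, and, more importantly, tensoring with fixed complexes and applying $\Hom_k^{\bullet}(-,\cdot)$ is exact over $k$ and hence preserves quasi-isomorphisms. This exactness is the engine of the argument. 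The strategy is to realise homotopy classes of morphisms as the degree-zero cohomology of a morphism complex, to show $f$ induces quasi-isomorphisms on such complexes, and then to extract the inverse by a formal diagram chase.

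For strictly unital $A_{\infty}$-modules $M^{\bullet}$ and $N^{\bullet}$ I would form the graded vector space
$$
\Hom^{\bullet}(M^{\bullet},N^{\bullet})=\prod_{n\geq 1}\Hom_k^{\bullet}\big(A^{\bullet\otimes n-1}\otimes M^{\bullet},N^{\bullet}\big)[1-n]
$$
(restricted to the strictly unital part), equipped with the differential $D$ whose vanishing on a degree-zero family $(f_n)_{n\geq 1}$ is exactly the morphism identity recorded in this section. By construction the degree-zero cocycles $Z^0$ are precisely the strictly unital $A_{\infty}$-module morphisms, the coboundaries $B^0$ are precisely the null-homotopic ones, and therefore
$$
H^0\big(\Hom^{\bullet}(M^{\bullet},N^{\bullet})\big)=\Hom_{K_{\infty}(A^{\bullet})}(M^{\bullet},N^{\bullet}).
$$
The key assertion I would isolate is that, for every strictly unital $A_{\infty}$-module $P^{\bullet}$, composition with $f$ induces quasi-isomorphisms
$$
f_{*}\colon \Hom^{\bullet}(P^{\bullet},M^{\bullet})\longrightarrow \Hom^{\bullet}(P^{\bullet},N^{\bullet}),\qquad f^{*}\colon \Hom^{\bullet}(N^{\bullet},P^{\bullet})\longrightarrow \Hom^{\bullet}(M^{\bullet},P^{\bullet}).
$$

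To prove this assertion I would filter $\Hom^{\bullet}$ by the number of tensor factors $n$. Only the terms built from $m_1$ and the $\nu_1$'s preserve $n$, while the genuinely higher structure maps strictly change it; hence on the associated graded the differential reduces to the one induced by these degree-one maps, each graded piece is $\Hom_k^{\bullet}(A^{\bullet\otimes n-1}\otimes P^{\bullet},-)$ with that differential, and $f$ acts there through $f_1$ alone (tensored with identities). Since $f_1$ is a quasi-isomorphism and we are over a field, $f_{*}$ and $f^{*}$ are quasi-isomorphisms on the associated graded. The remaining, and genuinely technical, point is convergence: the morphism complex is an infinite product, so I would verify that the filtration is exhaustive and complete and that the resulting spectral sequence converges, allowing the graded statement to be upgraded to a quasi-isomorphism of total complexes. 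This completeness and convergence bookkeeping is the main obstacle and the part demanding real care.

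Granting the assertion, the conclusion is formal. Taking $P^{\bullet}=N^{\bullet}$, surjectivity of $f_{*}$ on $H^0$ lifts the class of $\Id_{N^{\bullet}}$ to a morphism $g\colon N^{\bullet}\to M^{\bullet}$ with $f\circ g\sim \Id_{N^{\bullet}}$. Passing to first components, $f_1 g_1\sim \Id$ forces $g_1$ to be a quasi-isomorphism, so $g$ is itself a quasi-isomorphism; applying the construction to $g$ (with $P^{\bullet}=M^{\bullet}$) produces $h\colon M^{\bullet}\to N^{\bullet}$ with $g\circ h\sim \Id_{M^{\bullet}}$. Finally $h\sim (f\circ g)\circ h=f\circ(g\circ h)\sim f$, whence $g\circ f\sim g\circ h\sim \Id_{M^{\bullet}}$. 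Together with $f\circ g\sim \Id_{N^{\bullet}}$ this exhibits $g$ as a two-sided homotopy inverse of $f$, as required.
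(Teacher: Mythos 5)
The paper does not actually prove this statement; it is imported verbatim from Keller \cite[Thm.~4.2]{Kel01}, so there is no in-paper argument to compare against. Your proposal is a correct and essentially standard proof of the cited result, in the spirit of Keller and of Lef\`{e}vre-Hasegawa's thesis: the morphism complex you write down is exactly the $\Hom$-complex of the associated DG comodules over the bar coalgebra, with $Z^0$ the strictly unital morphisms and $B^0$ the null-homotopic ones, and the decreasing filtration by the number of tensor inputs does reduce the differential on the associated graded to the one built from $m_1$, $\nu_1^M$, $\nu_1^N$, on which $f$ acts through $f_1\otimes\mathrm{id}$ alone. You are right that the only genuinely delicate point is convergence; it is settled by the standard lemma that a map of complexes carrying complete, exhaustive, Hausdorff decreasing filtrations which is a quasi-isomorphism on the associated graded is a quasi-isomorphism (apply the usual successive-approximation argument to the cone, using that the infinite product is complete for this filtration and that the differential preserves it). The concluding diagram chase ($f\circ g\sim\mathrm{Id}$, $g_1$ forced to be a quasi-isomorphism, a further right inverse $h$ of $g$, and $h\sim f$ by associativity of composition in $K_{\infty}(A^{\bullet})$) is the standard two-sided-inverse argument and is sound. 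In short: correct, and the same mechanism as the literature proof, merely phrased without the comodule language.
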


Thus $D_{\infty}(A^{\bullet})=K_{\infty}(A^{\bullet})$ is the derived category of $A_{\infty}$-modules over $A^{\bullet}$. It naturally becomes a triangulated category as discussed in \cite[Prop.~5.2]{Kel01}. For instance the translation functor is given by the shifts $T(M^{\bullet})^i=M^{i+1}$ with higher multiplications $m_n^{T(M^{\bullet})}=(-1)^n m_n^{M^{\bullet}}$. 

When $A^{\bullet}$ is a DGA any DG-module is of course an $A_{\infty}$-module (with $\nu_n=0$ for $n\geq 3$); but not conversely. Given a complex $(M^{\bullet},\nu_1)$ there could be morphisms $A^{\bullet}\rightarrow \End_k^{\bullet}(M^{\bullet})$ of $A_{\infty}$-algebras which do not arise from a DG-module (cf. the last paragraph of \cite[p.~15]{Kel01}). However, on the level of derived categories we have an equivalence by a result from Lef\`{e}vre-Hasegawa's 2003 Paris 7 Ph.D. thesis.

\begin{lem}\label{dga}\cite[Lem.~4.1.3.8]{LH03}
Let $A^{\bullet}$ be a DGA (unital by our conventions). The inclusion of the category of (unital) DG-modules $C(A^{\bullet})\rightarrow C_{\infty}(A^{\bullet})$ induces an equivalence of triangulated categories
$$
D(A^{\bullet})\overset{\sim}{\longrightarrow} D_{\infty}(A^{\bullet}).
$$
\end{lem}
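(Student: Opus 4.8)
The plan is to realize the induced functor explicitly and then prove it is an equivalence by replacing every $A_{\infty}$-module by a quasi-isomorphic genuine DG-module via the two-sided bar construction. First I would check that the inclusion descends to derived categories. By the theorem of Kadeishvili quoted immediately above, every quasi-isomorphism of $A_{\infty}$-modules is invertible in $K_{\infty}(A^{\bullet})$, so $D_{\infty}(A^{\bullet})=K_{\infty}(A^{\bullet})$. A DG-module is precisely the $A_{\infty}$-module with $\nu_n=0$ for $n\geq 3$, and a DG-morphism the $A_{\infty}$-morphism with $f_n=0$ for $n\geq 2$; this assignment is compatible with homotopies and carries DG quasi-isomorphisms to $A_{\infty}$-quasi-isomorphisms. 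Hence the inclusion induces $K(A^{\bullet})\to K_{\infty}(A^{\bullet})=D_{\infty}(A^{\bullet})$ sending DG quasi-isomorphisms to isomorphisms, and the universal property of localization factors it through $D(A^{\bullet})$, yielding an exact functor $\iota\colon D(A^{\bullet})\to D_{\infty}(A^{\bullet})$ (it commutes with the shift $T$ and sends cones to cones).

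For essential surjectivity I would use the two-sided bar resolution. Given a strictly unital $A_{\infty}$-module $M^{\bullet}$, form the semifree DG left $A^{\bullet}$-module
$$
\mathbf{B}(A^{\bullet},A^{\bullet},M^{\bullet})=A^{\bullet}\otimes T^c(\bar{A}^{\bullet}[1])\otimes M^{\bullet},
$$
where $\bar{A}^{\bullet}=A^{\bullet}/k\cdot 1$ and the differential is the usual bar differential twisted by the higher multiplications $m_n$ of $A^{\bullet}$ and the structure maps $\nu_n$ of $M^{\bullet}$; the Stasheff identities of Definition \ref{Aalg}(2) and their module analogues are exactly what force $d^2=0$. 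The counit $\mathbf{B}(A^{\bullet},A^{\bullet},M^{\bullet})\to M^{\bullet}$ is an $A_{\infty}$-quasi-isomorphism: filtering by bar length, the left unit of $A^{\bullet}$ supplies the standard extra-degeneracy contracting homotopy on each associated-graded piece. Thus every object of $D_{\infty}(A^{\bullet})$ lies in the image of $\iota$.

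For full faithfulness I would compare morphism spaces. On the DG side $\Hom_{D(A^{\bullet})}(M,N)$ is computed by a $K$-projective replacement of the source, and the bar resolution is such a replacement, being semifree (hence $K$-projective in the sense used after Theorem \ref{main}); so $\Hom_{D(A^{\bullet})}(M,N)=\Hom_{K(A^{\bullet})}(\mathbf{B}(A^{\bullet},A^{\bullet},M^{\bullet}),N)$. The crux is that the DG-morphism complex $\Hom_{A^{\bullet}}^{\bullet}(\mathbf{B}(A^{\bullet},A^{\bullet},M^{\bullet}),N^{\bullet})$ is canonically isomorphic, as a complex, to the complex of $A_{\infty}$-module morphisms $M^{\bullet}\to N^{\bullet}$: since $\mathbf{B}(A^{\bullet},A^{\bullet},M^{\bullet})$ is free over $A^{\bullet}$ on $T^c(\bar{A}^{\bullet}[1])\otimes M^{\bullet}$, an $A^{\bullet}$-linear map out of it is the same datum as a family $(f_n)_{n\geq 1}$ of maps $A^{\bullet \otimes n-1}\otimes M^{\bullet}\to N^{\bullet}$, and unwinding the bar differential turns the condition of commuting with $d$ into precisely the $A_{\infty}$-morphism identity of Section \ref{dercat}, with homotopies matching homotopies. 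Passing to $h^0$ on both sides identifies $\Hom_{D(A^{\bullet})}(M,N)$ with $\Hom_{K_{\infty}(A^{\bullet})}(M,N)=\Hom_{D_{\infty}(A^{\bullet})}(M,N)$, and naturality makes this the map induced by $\iota$.

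The main obstacle will be the bookkeeping in the bar construction: writing the twisted differential with the correct Koszul signs, deriving $d^2=0$ from the Stasheff identities, establishing that the counit is a quasi-isomorphism via the filtration and extra-degeneracy homotopy, and -- most delicately -- verifying that the dictionary between $A^{\bullet}$-linear maps out of $\mathbf{B}(A^{\bullet},A^{\bullet},M^{\bullet})$ and $A_{\infty}$-morphisms is compatible on the nose with both differentials and with both notions of homotopy. Once this dictionary is in place, full faithfulness and essential surjectivity are formal, and $\iota$ is an equivalence of triangulated categories.
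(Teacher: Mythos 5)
The paper offers no proof of this lemma; it is quoted directly from Lef\`{e}vre-Hasegawa \cite[Lem.~4.1.3.8]{LH03}, and your bar-resolution argument is a correct reconstruction of essentially the same mechanism used there (cofibrant replacement via the bar construction, with the $A_{\infty}$-morphism complex into a DG-module identified with the DG-morphism complex out of $\mathbf{B}(A^{\bullet},A^{\bullet},M^{\bullet})$). The only cosmetic slip is in the acyclicity step: the extra-degeneracy homotopy furnished by the unit contracts the augmented two-sided bar complex directly, so you do not also need to pass to the associated graded of the bar-length filtration; otherwise the outline is sound.
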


Another key result from Lef\`{e}vre-Hasegawa's treatise which we will need concerns restriction of modules along a morphism of $A_{\infty}$-algebras
$f: A^{\bullet} \rightarrow B^{\bullet}$. Say $M^{\bullet}$ is an $A_{\infty}$-module over $B^{\bullet}$ with higher structure maps $(\nu_n^B)$. On the same vector space 
$M^{\bullet}$ we define an $A_{\infty}$-module structure over $A^{\bullet}$ by letting 
$$
\nu_n^A: A^{\bullet \otimes n-1}\otimes M^{\bullet}\longrightarrow M^{\bullet} \y \y \y \y \y \nu_n^A=\sum (-1)^{\sigma}\nu_{r+1}^B(f_{i_1}\otimes \cdots\otimes f_{i_r}\otimes \text{Id})
$$ 
where the sum extends over $1\leq r\leq n-1$ and decompositions $i_1+\cdots+i_r=n-1$. The sign $(-1)^{\sigma}$ is as in definition \ref{Amor}, cf. \cite[Sect.~6.2]{Kel01}. The ensuing module is denoted by $f^*M^{\bullet}$ .

\begin{thm}\label{res}\cite[Thm.~4.1.2.4]{LH03}
If $f: A^{\bullet} \rightarrow B^{\bullet}$ is a quasi-isomorphism of $A_{\infty}$-algebras, the restriction functor $f^*:C_{\infty}(B^{\bullet})\rightarrow C_{\infty}(A^{\bullet})$
induces an equivalence of triangulated categories
$$
f^*:D_{\infty}(B^{\bullet})\overset{\sim}{\longrightarrow} D_{\infty}(A^{\bullet}).
$$
\end{thm}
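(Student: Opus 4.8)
The plan is to exhibit an explicit quasi-inverse to $f^*$ coming from a homotopy inverse of $f$ at the level of $A_{\infty}$-algebras, so that the whole statement reduces to three formal inputs. The first observation, which makes the triangulated-ness transparent, is that restriction does not alter the underlying complex: inspecting the formula defining $\nu_n^A$, the differential $\nu_1$ of $f^*M^{\bullet}$ is the same as that of $M^{\bullet}$ (no $f_i$ enters in degree $n=1$), and for a morphism $\phi$ of $B^{\bullet}$-modules one has $(f^*\phi)_1=\phi_1$. Hence $f^*$ preserves the cohomology $h^*$, so it both preserves and reflects quasi-isomorphisms, and it is compatible with the shift and with mapping cones, hence triangulated. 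Moreover, by the theorem of Kadeishvili preceding Lemma \ref{dga} (\cite[Thm.~4.2]{Kel01}), every quasi-isomorphism of $A_{\infty}$-modules is already a homotopy equivalence, so $D_{\infty}=K_{\infty}$ for both $A^{\bullet}$ and $B^{\bullet}$. It therefore suffices to prove that $f^*$ is an equivalence of the homotopy categories $K_{\infty}(B^{\bullet})\to K_{\infty}(A^{\bullet})$.

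Next I would record the formal functoriality of restriction. Straight from the definition one gets $\text{Id}^*=\text{Id}$ and the contravariance $(g\circ f)^*=f^*\circ g^*$ for composable $A_{\infty}$-morphisms. The essential homotopy-theoretic input is that this assignment is \emph{homotopy invariant}: if $f^{(0)},f^{(1)}\colon A^{\bullet}\to B^{\bullet}$ are homotopic $A_{\infty}$-morphisms, then the restriction functors $(f^{(0)})^*$ and $(f^{(1)})^*$ become isomorphic after passing to $K_{\infty}$. Concretely, an algebra homotopy $H$ between $f^{(0)}$ and $f^{(1)}$ is fed into the restriction formula to produce, for each $B^{\bullet}$-module $M^{\bullet}$, a module homotopy between $(f^{(0)})^*M^{\bullet}$ and $(f^{(1)})^*M^{\bullet}$ that is natural in $M^{\bullet}$; this is a bookkeeping computation, cleanest to organize in the bar-construction picture where $A_{\infty}$-morphisms become dg-coalgebra maps and restriction becomes corestriction of comodules.

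The remaining ingredient is the algebra-level analogue of \cite[Thm.~4.2]{Kel01}: a quasi-isomorphism $f\colon A^{\bullet}\to B^{\bullet}$ of $A_{\infty}$-algebras admits a homotopy inverse $g\colon B^{\bullet}\to A^{\bullet}$, i.e. an $A_{\infty}$-morphism with $g\circ f\sim \text{Id}_{A^{\bullet}}$ and $f\circ g\sim\text{Id}_{B^{\bullet}}$ (see \cite{LH03}). One builds $g=(g_n)_{n\geq 1}$ by induction: since $f_1$ is a quasi-isomorphism of complexes one first chooses $g_1$ with $g_1f_1\sim\text{Id}$, and at each stage the obstruction to extending $(g_1,\dots,g_{n-1})$ to a $g_n$ satisfying the morphism identity of Definition \ref{Amor} is a cocycle whose class is killed precisely because $f_1$, hence $h^*(f)$, is invertible (and the construction can be carried out strictly unitally, cf. Remark \ref{sunit}). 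Combining the three ingredients: functoriality gives $f^*\circ g^*=(g\circ f)^*$ and $g^*\circ f^*=(f\circ g)^*$; homotopy invariance together with $g\circ f\sim\text{Id}$ and $f\circ g\sim\text{Id}$ then yields natural isomorphisms $f^*g^*\cong\text{Id}$ and $g^*f^*\cong\text{Id}$ in the homotopy categories. Thus $f^*$ is an equivalence, and being triangulated by the first paragraph, it is an equivalence of triangulated categories.

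The main obstacle is the middle ingredient, homotopy invariance of restriction (and, secondarily, the inductive construction of $g$): both require pushing the higher components $f_n$ and the homotopy $H$ through the nonlinear restriction formula, where the signs and the interaction with strict unitality must be tracked carefully. I expect the component-wise verification to be long but routine; the conceptually transparent route, and the one I would ultimately follow, is to translate the entire statement into the tensor-coalgebra formalism, in which $f^*$ is an exact functor of comodule categories and both the homotopy-invariance and the homotopy-inverse statements reduce to standard facts about dg-coalgebras.
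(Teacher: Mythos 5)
The paper does not prove Theorem \ref{res} at all --- it is imported wholesale from \cite[Thm.~4.1.2.4]{LH03} --- so there is no in-text argument to compare yours against; what you have written is a genuine proof sketch where the paper has only a citation. Your outline is correct and is essentially the classical route to this result: reduce $D_{\infty}$ to $K_{\infty}$ on both sides via the module-level Whitehead theorem (\cite[Thm.~4.2]{Kel01}, quoted just before Lemma \ref{dga}), then produce a quasi-inverse to $f^*$ as $g^*$ for a homotopy inverse $g$ of $f$. You have correctly isolated the two nontrivial inputs: (i) every quasi-isomorphism of $A_{\infty}$-algebras admits a (strictly unital) homotopy inverse, and (ii) restriction is homotopy invariant, i.e.\ a homotopy $H$ between $f^{(0)}$ and $f^{(1)}$ induces, naturally in $M^{\bullet}$, a morphism $(f^{(0)})^*M^{\bullet}\rightarrow (f^{(1)})^*M^{\bullet}$ whose first component is the identity of the underlying complex --- and which is therefore automatically invertible in $K_{\infty}(A^{\bullet})$ by (the module version of) Kadeishvili's theorem, a simplification you implicitly and correctly exploit. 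Both are theorems of the theory (for (i) see \cite{Kel01} and \cite{LH03}; your obstruction-theoretic induction is one standard proof, passage through minimal models is another). Two small points of care: the formula for $\nu_n^A$ as printed in the paper is literally empty at $n=1$, so your reading that $\nu_1^A=\nu_1^B$ is the intended convention rather than a consequence of the displayed sum; and the strict-unitality bookkeeping you defer must be carried through both (i) and (ii), which is where \cite{LH03} earns its keep by working with the bar construction and comodules over the tensor coalgebra --- the formalism you name at the end, and the one in which Lef\`{e}vre-Hasegawa actually proves the theorem (as a Quillen-type equivalence of comodule categories). So your proposal is sound; its only incomplete pieces are exactly the computations you flag as routine, and they are.
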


In our applications $A^{\bullet}$ will be a DGA and we endow $h^*(A^{\bullet})$ with an $A_{\infty}$-structure as in Kadeishvili's theorem \ref{kade}. By lemma \ref{dga}
and theorem \ref{res} a choice of quasi-isomorphism $f: h^*(A^{\bullet})\rightarrow A^{\bullet}$ gives an equivalence $f^*:D(A^{\bullet})\overset{\sim}{\longrightarrow}
D_{\infty}(h^*(A^{\bullet}))$ of triangulated categories.

\section{Putting the pieces together}

\subsection{Proof of theorem \ref{one}}

We return to the setup of section \ref{yon}. Thus $k$ is a field of characteristic $p>0$ and $G$ is a compact $p$-adic Lie group which is assumed to be torsionfree and pro-$p$. 
Recall that $\Omega=k[\![G]\!]$ and $D(\Omega)$ is the derived category of pseudocompact $\Omega$-modules. We fix a projective resolution $P^{\bullet}\rightarrow k$ and consider the DGA $\mathcal{H}^{\bullet} \overset{\sim}{\longrightarrow} \End_{\Omega}^{\bullet}(P^{\bullet})$ whose cohomology is the Yoneda algebra $h^*(\mathcal{H}^{\bullet}) \overset{\sim}{\longrightarrow} \Ext_{\Omega}^*(k,k)=E(\Omega)$. Furthermore, by Schneider's theorem \ref{main} and Pontryagin duality (corollary \ref{pd}) there is an equivalence
$$
D(\Omega)^{\op} \overset{\sim}{\longrightarrow} D(\End_{\Omega}^{\bullet}(P^{\bullet}))
$$
which on objects takes a $K$-projective complex $X^{\bullet}$ of pseudocompact $\Omega$-modules to the left DG module $\Hom_{\Omega}^{\bullet}(X^{\bullet},P^{\bullet})$. Now, by Kadeishvili's theorem \ref{kade} the cohomology $h^*(\mathcal{H}^{\bullet})\overset{\sim}{\longrightarrow} E(\Omega)$ acquires a natural $A_{\infty}$-algebra structure 
(unique up to non-unique isomorphism) and we may choose a quasi-isomorphism of $A_{\infty}$-algebras $f: h^*(\mathcal{H}^{\bullet}) \rightarrow \mathcal{H}^{\bullet}$ which by lemma \ref{dga} and theorem \ref{res} yields an equivalence
$$
f^*: D(\mathcal{H}^{\bullet}) \overset{\sim}{\longrightarrow} D_{\infty}(E(\Omega)).
$$
Altogether this gives an equivalence $D(\Omega)^{\op} \overset{\sim}{\longrightarrow} D_{\infty}(E(\Omega))$ taking $X^{\bullet}$ to $f^*\Hom_{\Omega}^{\bullet}(X^{\bullet},P^{\bullet})$.

\begin{rem}
There is a natural way to define the {\it{opposite}} $A^{\bullet \op}$ of an $A_{\infty}$-algebra $A^{\bullet}$, cf. \cite[Sect.~3.5]{LPWZa}. We take the same graded vector space 
but the opposite higher multiplication maps $m_n^{\op}: (A^{\bullet \op})^{\otimes n}\rightarrow A^{\bullet \op}$ are defined as $m_n^{\op}=(-1)^{\binom{n}{2}+1}\cdot m_n\circ (\text{reverse})$ where "reverse" is the map reversing the factors in the tensor product. For example, when applied to homogeneous elements $a_i$ we have
$$
m_n^{\op}(a_1\otimes \cdots \otimes a_n)=(-1)^{\binom{n}{2}+1+\sum_{i<j} \deg(a_i)\deg(a_j)}\cdot m_n(a_n \otimes \cdots \otimes a_1).
$$
By \cite[Lem.~3.3]{LPWZa} this gives an $A_{\infty}$-algebra $A^{\bullet \op}$ and one can easily verify that there is an equivalence
$$
D_{\infty}(A^{\bullet \op})  \overset{\sim}{\longrightarrow} D_{\infty}(A^{\bullet})^{\op}.
$$
(The opposite of a triangulated category was defined in section \ref{not}.)
\end{rem}

We define the {\it{Koszul}} dual of $\Omega$ as follows.

\begin{defn}
$\Omega^!=E(\Omega)^{\op}=\Ext_{\Omega}^*(k,k)^{\op}$ (with its $A_{\infty}$-structure).
\end{defn}

Then we conclude that $D(\Omega) \overset{\sim}{\longrightarrow} D_{\infty}(\Omega^!)$ by taking opposites on both sides above.

\subsection{Proof of theorem \ref{two}}\label{prooftwo}

Now let $G$ be a uniform pro-$p$ group. As explained after corollary \ref{cohom} it carries a natural $p$-valuation $\omega$ such that $(G,\omega)$ becomes equi-$p$-valued with $t=1$ and saturated. Thus
$$
\Omega^!=\Ext_{\Omega}^*(k,k)^{\op} \overset{\ref{gomega}}{=\joinrel=\joinrel=} \Ext_{G}^*(k,k)=\joinrel=\joinrel=H^*(G,k) \overset{\ref{cohom}}{=\joinrel=\joinrel=}
\bigwedge \frak{g}^{\vee}
$$
as graded $k$-algebras from which the first half of theorem \ref{two} follows.

It remains to show $\mu_n=0$ for $n >2$ when $G$ is abelian. Note that any abelian $p$-valuable group $G$ of rank $d$ is topologically isomorphic to $\Z_p^d$ by choosing a basis
$(g_1,\ldots,g_d)$. In particular we can always endow $G$ with a $p$-valuation $\omega$ in such a way that $(G,\omega)$ becomes equi-$p$-valued with $t=1$ and saturated; just let $\omega(g)=1+\min_{i=1,\ldots,d} v(x_i)$ when $g=g_1^{x_1}\cdots g_d^{x_d}$. As always we are assuming $p>2$ here. Correspondingly we have an isomorphism of topological filtered $k$-algebras
$$
k[\![X_1,\ldots,X_d]\!] \overset{\sim}{\longrightarrow} \Omega \y \y \y \y \y \sum_{\alpha} c_{\alpha}X^{\alpha} \mapsto \sum_{\alpha} c_{\alpha}{\bf{b}}^{\alpha}.
$$
We prefer to free $\Omega$ from coordinates and think of it as a completed symmetric algebra as follows. Fix a $d$-dimensional vector space $V$ over $k$ and consider the $I$-adic completion $\widehat{S(V)}=\varprojlim S(V)/I^i$ where $I=\ker\big(S(V)\longrightarrow k\big)$ is the augmentation ideal. Fixing a basis $(e_1,\ldots,e_d)$ for $V$ the map which takes 
${\bf{e}}^{\alpha}=e_1^{\alpha_1}\cdots e_d^{\alpha_d}$ to ${\bf{b}}^{\alpha}$ extends to an isomorphism $\widehat{S(V)}\overset{\sim}{\longrightarrow} \Omega$ of topological $k$-algebras. Note that $\widehat{S(V)}$ is flat over $S(V)$ so $I$-adic completion is exact on the category of finitely generated $S(V)$-modules. In particular we obtain a projective resolution $P^{\bullet}\rightarrow k$ in $\Mod(\Omega)$ by taking the $I$-adic completion of the Koszul resolution $K^{\bullet}$. That is, 
$$
K^{\bullet} \y \cdots \longrightarrow S(V) \otimes \bigwedge^2 V \longrightarrow S(V) \otimes \bigwedge^1 V \longrightarrow S(V)\longrightarrow k \longrightarrow 0,
$$
and
$$
P^{\bullet} \y \cdots \longrightarrow \widehat{S(V)} \otimes \bigwedge^2 V \longrightarrow \widehat{S(V)} \otimes \bigwedge^1 V \longrightarrow \widehat{S(V)}\longrightarrow k \longrightarrow 0.
$$
Recall that we use cohomological indexing which means $K^{-i}=S(V) \otimes \bigwedge^i V$ and similarly for $P^{\bullet}$. We use this particular completed Koszul resolution $P^{\bullet}$ in the definition of $\HH^{\bullet}$. That is $\HH^{\bullet}=\End_{\Omega}^{\bullet}(P^{\bullet})$, which has cohomology
$h^*(\HH^{\bullet})=E(\Omega)$. By theorem \ref{kade} the $A_{\infty}$-structure on $E(\Omega)$ is characterized up to $A_{\infty}$-isomorphism by the fact that there is a
quasi-isomorphism
$$
f: h^*(\HH^{\bullet})=\joinrel=\joinrel=E(\Omega) \longrightarrow \HH^{\bullet}.
$$
Analogously we have the differential graded algebra $\End_{S(V)}^{\bullet}(K^{\bullet})$ obtained from the (actual) Koszul resolution, which has cohomology $E(S(V))$. Again by theorem \ref{kade} there is an $A_{\infty}$-structure on $E(S(V))$ admitting a quasi-isomorphism
$$
g: h^*\big(\End_{S(V)}^{\bullet}(K^{\bullet})\big)=\joinrel=\joinrel=E(S(V))\longrightarrow \End_{S(V)}^{\bullet}(K^{\bullet}).
$$
By a well-known result a finitely generated graded algebra $A$ is Koszul if and only if the $A_{\infty}$-algebra $\Ext_A^*(k,k)$ is formal, cf. \cite[Sect.~2.2]{Kel02} and \cite[Thm.~6.3.3]{Wit18}. We deduce that $E(S(V))$ has the trivial $A_{\infty}$-structure since $S(V)$ is Koszul. We are therefore done once we check that the natural map
$$
\End_{S(V)}^{\bullet}(K^{\bullet}) \longrightarrow \End_{\widehat{S(V)}}^{\bullet}(P^{\bullet})
$$
which takes a homogeneous $a=(a_q)_{q\in \Z}$ to the corresponding sequence $\hat{a}=(\hat{a}_q)_{q\in \Z}$ of maps between completions, is a quasi-isomorphism. 
This follows from the standard calculation (and the trivial observation that the differentials of $\Hom_{S(V)}(S(V) \otimes \bigwedge^{\bullet} V,k)$ all vanish since $V \hookrightarrow S(V)$ augments to zero)
\begin{equation}\label{com}
\begin{split}
h^i\big(\End_{S(V)}^{\bullet}(K^{\bullet})\big)&=\Ext_{S(V)}^i(k,k)\\
  &=h^i\big(\Hom_{S(V)}(K^{\bullet},k)\big)\\
  &=h^i\big(\Hom_{S(V)}(S(V) \otimes \bigwedge^{\bullet} V,k) \big)\\
  &=h^i\big(\Hom_{k}(\bigwedge^{\bullet} V,k) \big)\\
  &=\bigwedge^i V^{\vee}
\end{split}
\end{equation}
together with the completely analogous computation of the cohomology of $\End_{\widehat{S(V)}}^{\bullet}(P^{\bullet})$ which in particular shows that -- as graded $k$-algebras -- we have isomorphisms
$$
E(S(V)) \overset{\sim}{\longrightarrow} E(\widehat{S(V)}) \overset{\sim}{\longrightarrow}\bigwedge V^{\vee}.
$$ 
Since they are all minimal models of $\HH^{\bullet}$ they are $A_{\infty}$-isomorphic. As observed above $E(S(V))$ has a trivial $A_{\infty}$-structure, and consequently so does 
$E(\Omega)$ and its opposite $\Omega^!=\bigwedge \frak{g}^{\vee}$. This finishes the proof. 

\begin{rem}\label{d=1}
The case $d=1$ of theorem \ref{two} is due to Schneider. In \cite[Sect.~5.2(4)]{Sch15} it is shown that there is an equivalence
$D(\Z_p)\overset{\sim}{\longrightarrow} D(k[\varepsilon])$ where $k[\varepsilon]=k \oplus k\varepsilon$ is the algebra of dual numbers ($\varepsilon^2=0$) thought of as a DGA concentrated in degrees $0$ and $1$, with zero differential. Note that DG-modules over $k[\varepsilon]$ are the same as graded $k$-vector spaces 
$M^{\bullet}=\oplus_{i\in \Z} M^i$ with two anti-commuting differentials $d$ and $\varepsilon$ of degree one, i.e. $d\varepsilon+\varepsilon d=0$.
Obviously $k[\varepsilon]\simeq \bigwedge k$ so Schneider's result also follows from theorem \ref{two} by observing that
$\bigwedge k$ is intrinsically formal (every minimal $A_{\infty}$-structure on it is trivial); which can be deduced from a computation of its Hochschild cohomology
$HH^*(\bigwedge k,\bigwedge k)$ as in \cite[Ex.~1.1.16]{Wit18} together with Kadeishvili's criterion \cite[Thm.~1]{Kad09} which says that every $A_{\infty}$-structure on a graded algebra $R$ is degenerate if $HH^{n,2-n}(R,R)=0$ for all $n>2$. 

Note that when $G$ is abelian the DGA $\bigwedge \frak{g}^{\vee}$ is graded commutative and therefore isomorphic to its opposite $\big(\bigwedge \frak{g}^{\vee}\big)^{\op}$. (We refer the reader to \cite[Sect.~3.4]{LPWZa} for its definition.) As a result theorem \ref{two} may also be thought of as an equivalence $D(G) \overset{\sim}{\longrightarrow} D(\bigwedge \frak{g}^{\vee})$ in the case where $G\simeq \Z_p^d$. Once we fix a basis $(\xi_i)$ for $\frak{g}$, a DG-module over $\bigwedge \frak{g}^{\vee}$ can be identified with a graded 
$k$-vector space $M^{\bullet}=\oplus_{i\in \Z} M^i$ with $d+1$ anti-commuting degree one differentials $d, \varepsilon_1,\ldots, \varepsilon_d$ -- we apologize for the double meaning of $d$ which refers to both $d_{M^{\bullet}}$ and $\dim G$. (Here $\varepsilon_i$ is the action of 
the dual basis vector $\xi_i^{\vee}\in \frak{g}^{\vee}$ on $M^{\bullet}$.) 
\end{rem}

\section{Open ends and unanswered questions}

We finish with a few questions which have not been addressed in this paper.

\begin{itemize}
\item[(a)] By Theorem \ref{one} one can recover $\Omega=k[\![G]\!]$ up to derived equivalence from the $A_{\infty}$-algebra $\Omega^!=\Ext_{\Omega}^*(k,k)^{\op}$.
Does $\Omega^!$ determine $\Omega$ up to isomorphism? 

\smallskip

\item[(b)] Is there a converse to Theorem \ref{two} in the sense that $G$ must be abelian if the $A_{\infty}$-structure on $\bigwedge \frak{g}^{\vee}$ is trivial? 

\smallskip

\item[(c)] Suppose $H \subset G$ is an open subgroup. Then $\Omega(G)$ is finite free over the subalgebra $\Omega(H)$ and we have the restriction map
$\Mod \Omega(G) \rightarrow \Mod \Omega(H)$ which induces a map $D(\Omega(G))\rightarrow D(\Omega(H))$. Is there a morphism of $A_{\infty}$-algebras
$\bigwedge \frak{g}^{\vee}\longrightarrow \bigwedge \frak{h}^{\vee}$ inducing the corresponding map on $D_{\infty}$ via "extension of scalars" along this map?
\end{itemize} 

Several experts in the field have independently confirmed that the answer to all three questions is yes, but it would take us too far afield to try to reproduce their arguments here. 



\subsection*{Acknowledgments} This work emerged from several inspiring conversations with Peter Schneider during my visit to M\"{u}nster in July 2015, and after. He led me to think about Koszul duality and $A_{\infty}$-algebras, and I am truly grateful for his guidance. The inspiration drawn from \cite{Sch15} should be clear to the reader. After sending out an initial draft of the article in the Summer of 2018, I received tremendously helpful remarks and suggestions from Carl Wang-Erickson, Michael Harris, Bernhard Keller, Karol Koziol, 
Leonid Positselski, Niccolo' Ronchetti, and Peter Schneider. Special thanks go to Positselski for making extensive comments and sharing his expertise in the area, as well as Harris for pointing to potential applications. 



\bigskip

\noindent {\it{E-mail address}}: {\texttt{csorensen@ucsd.edu}}

\noindent {\sc{Department of Mathematics, UCSD, La Jolla, CA, USA.}}


\begin{thebibliography}{xxxxxxx}

\bibitem[BGG78]{BGG78} I. N. Bernstein, I. M. Gelfand, and S. I. Gelfand, {\it{Algebraic vector bundles on $\Bbb{P}^n$ and problems of linear algebra}}. (Russian) Funktsional. Anal. i Prilozhen. 12 (1978), no. 3, 66--67.

\bibitem[BGS96]{BGS96} A. Beilinson, V. Ginzburg, and W. Soergel, {\it{Koszul duality patterns in representation theory}}. J. Amer. Math. Soc. 9 (1996), no. 2, 473--527.

\bibitem[BL94]{BL94} J. Bernstein and V. Lunts, {\it{Equivariant sheaves and functors}}. Lecture Notes in Mathematics, 1578. Springer-Verlag, Berlin, 1994.

\bibitem[Bru66]{Bru66} A. Brumer, {\it{Pseudocompact algebras, profinite groups and class formations}}. J. Algebra 4 (1966) 442--470.

\bibitem[CE99]{CE99} H. Cartan and S. Eilenberg, {\it{Homological algebra}}. With an appendix by David A. Buchsbaum. Reprint of the 1956 original. Princeton Landmarks in Mathematics. Princeton University Press, Princeton, NJ, 1999.

\bibitem[DdSMS]{DdSMS} J. D. Dixon, M. P. F. du Sautoy, A. Mann, and D. Segal, {\it{Analytic pro-p groups}}. Second edition. Cambridge Studies in Advanced Mathematics, 61. Cambridge University Press, Cambridge, 1999.

\bibitem[Dem86]{Dem86} M. Demazure, {\it{Lectures on p-divisible groups}}. Reprint of the 1972 original. Lecture Notes in Mathematics, 302. Springer-Verlag, Berlin, 1986.

\bibitem[Eme10]{Eme10} M. Emerton, {\it{Ordinary parts of admissible representations of p-adic reductive groups I. Definition and first properties}}. Ast\'{e}risque No. 331 (2010), 
355--402.

\bibitem[Flo06]{Flo06} G. Fl\o{}ystad, {\it{Koszul duality and equivalences of categories}}. Trans. Amer. Math. Soc. 358 (2006), no. 6, 2373--2398.

\bibitem[Gab62]{Gab62} P. Gabriel, {\it{Des cat\'{e}gories ab\'{e}liennes}}. Bull. Soc. Math. France 90 (1962) 323--448.


\bibitem[Har66]{Har66} R. Hartshorne, {\it{Residues and duality}}. Lecture notes of a seminar on the work of A. Grothendieck, given at Harvard 1963/64. With an appendix by P. Deligne. Lecture Notes in Mathematics, No. 20 Springer-Verlag, Berlin-New York 1966.

\bibitem[HKN11]{HKN11} A. Huber, G. Kings, and N. Naumann, {\it{Some complements to the Lazard isomorphism}}. Compos. Math. 147 (2011), no. 1, 235--262.

\bibitem[Kad83]{Kad83} T. Kadeishvili, {\it{The algebraic structure in the homology of an $A(\infty)$-algebra}}. Soobshch. Akad. Nauk Gruzin. SSR 108 (1982), no. 2, 249--252 (1983).

\bibitem[Kad86]{Kad86} T. Kadeishvili, {\it{Twisted tensor products for the category of $A(\infty)$-algebras and $A(\infty)$-modules}}. Trudy Tbiliss. Mat. Inst. Razmadze Akad. Nauk Gruzin. SSR 83 (1986), 26--45.

\bibitem[Kad88]{Kad88} T. Kadeishvili, {\it{The structure of the $A(\infty)$-algebra, and the Hochschild and Harrison cohomologies}}. Trudy Tbiliss. Mat. Inst. Razmadze Akad. Nauk Gruzin. SSR 91 (1988), 19--27.

\bibitem[Kad09]{Kad09} T. Kadeishvili, {\it{Cohomology $C_{\infty}$-algebra and rational homotopy type}}. Algebraic topology--old and new, 225--240, Banach Center Publ., 85, Polish Acad. Sci. Inst. Math., Warsaw, 2009.

\bibitem[Kel94]{Kel94} B. Keller, {\it{Deriving DG categories}}. Ann. Sci. \'{E}cole Norm. Sup. (4) 27 (1994), no. 1, 63--102.

\bibitem[Kel01]{Kel01} B. Keller, {\it{Introduction to A-infinity algebras and modules}}. Homology Homotopy Appl. 3 (2001), no. 1, 1--35.

\bibitem[Kel02]{Kel02} B. Keller, {\it{A-infinity algebras in representation theory}}. Representations of algebra. Vol. I, II, 74--86, Beijing Norm. Univ. Press, Beijing, 2002.

\bibitem[Kel06]{Kel06} B. Keller, {\it{A-infinity algebras, modules and functor categories}}. Trends in representation theory of algebras and related topics, 67--93, Contemp. Math., 406, Amer. Math. Soc., Providence, RI, 2006.

\bibitem[Kel07]{Kel07} B. Keller, {\it{Derived categories and tilting}}. Handbook of tilting theory, 49--104, London Math. Soc. Lecture Note Ser., 332, Cambridge Univ. Press, Cambridge, 2007.

\bibitem[Koh17]{Koh17} J. Kohlhaase, {\it{Smooth duality in natural characteristic}}. Adv. Math. 317 (2017), 1--49. 

\bibitem[Kon95]{Kon95} M. Kontsevich, {\it{Homological algebra of mirror symmetry}}. Proceedings of the International Congress of Mathematicians, Vol. 1, 2 (ZŸrich, 1994), 120--139, Birkh\"{a}user, Basel, 1995.

\bibitem[Kra07]{Kra07} H. Krause, {\it{Derived categories, resolutions, and Brown representability}}. Interactions between homotopy theory and algebra, 101--139, Contemp. Math., 436, Amer. Math. Soc., Providence, RI, 2007.

\bibitem[Laz65]{Laz65} M. Lazard, {\it{Groupes analytiques p-adiques}}. Inst. Hautes \'{E}tudes Sci. Publ. Math. No. 26 (1965) 389--603.

\bibitem[LH03]{LH03} K. Lef\`{e}vre-Hasegawa, {\it{Sur les $A_{\infty}$-cat\'{e}gories}}. Th\`{e}se de Doctorat, Universite Paris 7, November 2003. Available online at \texttt{arXiv:math/0310337}.

\bibitem[LPWZa]{LPWZa} D.-M. Lu, J.H. Palmieri, Q.-S. Wu, and J. J. Zhang, {\it{Koszul equivalences in $A_{\infty}$-algebras}}. 
New York J. Math. 14 (2008), 325--378. 

\bibitem[LPWZb]{LPWZ} D.-M. Lu, J. H. Palmieri, Q.-S. Wu, and J. J. Zhang, {\it{A-infinity structure on Ext-algebras}}. J. Pure Appl. Algebra 213 (2009), no. 11, 2017--2037.

\bibitem[Mer99]{Mer99} S. Merkulov, {\it{Strong homotopy algebras of a K\"{a}hler manifold}}. Internat. Math. Res. Notices 1999, no. 3, 153--164.

\bibitem[Nee01]{Nee01} A. Neeman, {\it{Triangulated categories}}. Annals of Mathematics Studies, 148. Princeton University Press, Princeton, NJ, 2001.

\bibitem[OS18]{OS18} R. Ollivier and P. Schneider, {\it{A canonical torsion theory for pro-p Iwahori-Hecke modules}}. Adv. Math. 327 (2018), 52--127.

\bibitem[Pos93]{Pos93} L. E. Positselski, {\it{Nonhomogeneous quadratic duality and curvature}}. 
Funct. Anal. Appl. 27 (1993), no. 3, 197--204.

\bibitem[Pos11]{Pos11} L. Positselski, 
{\it{Two kinds of derived categories, Koszul duality, and comodule-contramodule correspondence}}. 
Mem. Amer. Math. Soc. 212 (2011), no. 996, vi+133 pp.

\bibitem[Sch95]{Sch95} W. H. Schikhof, {\it{A perfect duality between p-adic Banach spaces and compactoids}}. 
Indag. Math. (N.S.) 6 (1995), no. 3, 325--339. 

\bibitem[Sch11]{Sch11} P. Schneider, {\it{p-adic Lie groups}}. Grundlehren der Mathematischen Wissenschaften [Fundamental Principles of Mathematical Sciences], 344. Springer, Heidelberg, 2011.

\bibitem[Sch15]{Sch15} P. Schneider, {\it{Smooth representations and Hecke modules in characteristic p}}. Pacific J. Math. 279 (2015), no. 1-2, 447--464.

\bibitem[Ser65]{Ser65} J.-P. Serre, {\it{Sur la dimension cohomologique des groupes profinis}}. Topology 3 (1965) 413--420.

\bibitem[Spa88]{Spa88} N. Spaltenstein, 
{\it{Resolutions of unbounded complexes}}.
Compositio Math. 65 (1988), no. 2, 121--154. 

\bibitem[Sta61]{Sta61} J. Stasheff, {\it{Homotopy associativity of $H$-spaces}}. Thesis (Ph.D.)--Princeton University. 1961.

\bibitem[Sta18]{Sta18} The Stacks Project Authors, {\it{Stacks Project}}, \texttt{http://stacks.math.columbia.edu/}, retrieved May 2018.

\bibitem[ST02]{ST02} P. Schneider and J. Teitelbaum, {\it{Banach space representations and Iwasawa theory}}. 
Israel J. Math. 127 (2002), 359--380. 

\bibitem[Val14]{Val14} B. Vallette, {\it{Algebra + homotopy = operad}}. Symplectic, Poisson, and noncommutative geometry, 229--290, Math. Sci. Res. Inst. Publ., 62, Cambridge Univ. Press, New York, 2014.

\bibitem[Ven17]{Ven17} A. Venkatesh, {\it{Derived Hecke algebra and cohomology of arithmetic groups}}. Preprint, 2017. arXiv:1608.07234

\bibitem[WE18]{WE18} C. Wang-Erickson, {\it{Deformations of residually reducible Galois representations via $A_{\infty}$-algebra structure on Galois cohomology}}. Preprint, 2018. arXiv:1809.02484.

\bibitem[Wit18]{Wit18} S. Witherspoon, {\it{An Introduction to Hochschild Cohomology}}. Book draft, retrieved June 2018. Available online.

\end{thebibliography}
\end{document}